\DeclarePairedDelimiterX\Basics[1](){\let\given\sgiven #1}
\newtheorem{theorem}{Theorem}
\newtheorem{corollary}{Corollary}[theorem]
\newtheorem{remark}{Remark}[theorem]
\newtheorem{lemma}[theorem]{Lemma}
\newtheorem{definition}[theorem]{Definition}
\newtheorem{proposition}[theorem]{Proposition}
\numberwithin{equation}{section}
\newtheorem*{assumption*}{\assumptionnumber}
\newcommand{\innermid}{\nonscript\;\delimsize\vert\nonscript\;}
\newcommand{\activatebar}{%
  \begingroup\lccode`\~=`\|
  \lowercase{\endgroup\let~}\innermid 
  \mathcode`|=\string"8000
}
\providecommand{\assumptionnumber}{}
\newlist{steps}{enumerate}{1}
\setlist[steps, 1]{label = Step \arabic*:}
\let\given\givenbase
\newcommand{\interior}[1]{%
  {\kern0pt#1}^{\mathrm{o}}%
}
\begin{document}
\title{Stochastic Games for Fuel Followers Problem: $N$ versus MFG}
\author{
Xin Guo
\thanks{Department of Industrial Engineering and Operations Research, University of California, Berkeley, USA. Email: xinguo@berkeley.edu.}
\and Renyuan Xu
\thanks{Department of Industrial Engineering and Operations Research, University of California, Berkeley, USA. Email: renyuanxu@berkeley.edu.}
}
\date{\today}
\maketitle

\begin{abstract}
In this paper we formulate and analyze an $N$-player stochastic game of the classical fuel follower problem and  its Mean Field Game (MFG) counterpart.
For the $N$-player game, we obtain the Nash Equilibrium (NE)  explicitly by deriving and analyzing a system of Hamilton--Jacobi--Bellman (HJB) equations,  and by establishing the existence of a unique strong solution to the associated Skorokhod problem on an unbounded polyhedron with an oblique reflection. For the MFG, we derive a
bang-bang type NE under some mild technical conditions and by the viscosity solution approach. We also  show that this solution is an $\epsilon$-NE to the $N$-player game, with $\epsilon =O(\frac{1}{\sqrt{N}})$. 
The $N$-player game and the MFG differ in that the NE for the former is state dependent while the NE for the latter is {threshold-type bang-bang policy where the threshold is state independent
}. Our analysis shows that the NE for a
stationary MFG may not be the NE for the corresponding MFG.

\end{abstract}

\section{Introduction}

The classic fuel follower problem  concerns controlling a single moving object on a real line whose movement is modeled by a standard Brownian motion. The controller controls the position of her object in a possibly non-continuous way, i.e., with singular controls.
Her objective is to minimize over an infinite-time horizon, the total amount of control and the total $L^2$ distance of the object to the origin, with a discount factor. 
The optimal control derived by Bene{\v{s}}, Shepp, and Witsenhausen \cite{BSW1980} is shown to be of a ``bang-bang'' type.  That is, there exists a threshold $c$ such that when the object  is
within $[-c, c]$, it will be idling; and when it is outside $[-c, c]$, the controller  will apply the minimal push needed to bring it back within $[-c, c]$. The controlled dynamics is thus a reflected Brownian motion, with local times 
at $c$ and $-c$ as a result of  the minimal push.
This problem has a number of generalizations; see, for example,  Karatzas~\cite{Karatzas1982}, Karatzas and Shreve~\cite{KS1985}, and Shreve and Soner~\cite{SS1991}.  In particular, Karatzas~\cite{Karatzas1982} derives a similar bang-bang type optimal control  when the $L^2$ distance is relaxed to a class of convex and symmetric functions; see Figure~\ref{figure1}. Due to its simplicity, the fuel follower problem has many applications and has inspired a number of research topics, including reflected stochastic differential equations and semimartingales,  Skorokhod problems, and regularities of fully nonlinear PDEs with gradient constraints.   
See, for instance, Harrison and Williams~\cite{HW1987}, Soner and Shreve~\cite{SS1989}, Varadhan and Williams~\cite{VW1985}, Williams~\cite{WILLIAMS1987}, Dai and Williams~\cite{DW1996}, Kruk~\cite{KRUK2000}, 
Atar and Budhiraja~\cite{AB2006}, Budhiraja and Ross~\cite{BR2006}, Evans~\cite{EVANS1979}, and Hynd~\cite{HYND2010}.

\paragraph{Our work.} In this paper we formulate and analyze an $N$-player stochastic game of the fuel follower problem and  its Mean Field Game (MFG) counterpart.
In the $N$-player game, there are $N$ controllers and $N$ objects with each controller controlling one object. Each controller minimizes her total amount of control and the total distance of her object to {\it the center of the $N$ objects}.  The interaction among the $N$ 
controllers in the  game is to ensure that their own objects closely follow each other's movement.
We derive the Nash Equilibrium (NE) explicitly (Theorem \ref{NE_point_N}). This result is established in two main steps.
  The first step is to derive and analyze a system of Hamilton--Jacobi--Bellman (HJB) equations for the value functions and to establish a verification theorem (Theorem \ref{verification_theorem}) for the game. 
  After finding the solution to the HJB system, the second step is to construct a feedback control via proving the existence of a (unique strong) solution to an associated  Skorokhod problem on an unbounded polyhedron
  with an oblique reflection (Theorem \ref{thm: solution skorokhod}). 
 For the special case of $N=2$, we exploit the symmetric structure to obtain multiple NEs; see Figure \ref{figure_multi_NE}.

We then consider the corresponding MFG with $N \to \infty$, where each controller minimizes her total amount of control and the total distance of her object to {\it the mean position} of all objects. 
Our approach to analyze this MFG is to study directly the two coupled PDEs, the backward parabolic type HJB equation and the forward Kolmogorov equation.
By further exploiting the problem structure, we  derive an NE which  is of a bang-bang type
(Theorem \ref{MFG-SOL}). {The threshold of this bang-bang type NE is state-independent}
 as in the classical fuel follower problem.
We finally discuss the relation between the $N$-player game and the MFG, and show that this NE to the MFG game is an $\epsilon$-NE to the $N$-player game (Theorem \ref{MFG_approximation}).

\paragraph{Our contribution.}

 In general, there are essential technical difficulties in analyzing   $N$-player stochastic games. 
  The underlying
 HJB system is high dimensional,  the existence of its solution is usually hard to analyze, and 
 deriving explicit solutions is even more challenging. Therefore it is in general infeasible to characterize the equilibrium. 
 In the case of the singular control, the HJB equation is even more complex, with additional gradient constraints coming from possible jumps in the control. 
  For MFGs with singular controls, the Hamiltonian for the underlying stochastic control problem diverges and the classical stochastic maximal principle fails.  Moreover, due to the possible non-stationarity of the mean information process, the associated HJB equation is parabolic despite the  infinite-time horizon setting, making it even more difficult to analyze the regularity of the value functions or to derive explicit solutions.


To the best of our knowledge, our work is the first to provide a complete  characterization of the NEs for both the $N$-player stochastic game and the MFG in a singular control setting.    Our explicit solutions are derived for a class of  convex and symmetric functions, without the usual linear-quadratic  structure for MFGs with regular controls in  Bardi~\cite{Bardi2012}, Bardi and Priuli~\cite{BP2014}, Bensoussan, Sung, Yam, and Yung~\cite{BSYY2016}.
 
Moreover, explicit solutions derived in this paper make it possible to directly compare the structural differences between the MFG and the $N$-player game.  It provides useful insights 
 not only for analyzing general $N$-player games but also for proper formulations of MFGs. 
Indeed,  MFGs  may be very different in nature from  $N$-player games:
in the fuel follower problem, the MFG  degenerates to a single-player game in the sense that its NE 
is {threshold-type bang-bang policy where the threshold is state independent} (Proposition \ref{constant_convergence} and Proposition \ref{proposition-convergence}), while the NEs for the $N$-player game are state dependent (Theorem \ref{NE_point_N}). 
The collapse of the MFG to the single player problem (Proposition \ref{constant_convergence}) is a side effect  by the {\it aggregation} in the MFG formulation: players become more {\it anticipative} when they are assumed to be identical. Our analysis also shows that the NE for a
stationary MFG may not be the NE for the corresponding MFG (Remark \ref{remark4}).

{There are also some noteworthy economic insights from our analysis.
For instance,  in the $N$-player game, we show that when the number of players increases, it is more costly for each player to keep track of other players before making  decisions, as  players will intervene more frequently due to the increasing complexity of the game.
Moreover, the bigger the discount factor $\alpha$, the less frequent players will  intervene. (See Remarks \ref{remark1} and \ref{remark2}). 
}

\paragraph{Related work on stochastic games.}

There are a number of papers on non-zero-sum two-player games with singular controls.  By treating one as a controller and the other as a stopper, where the controller minimizes the finite variation process and the stopper decides the optimal time to terminate the game, Karatzas and Li~\cite{KL2011} 
prove the existence of an NE for the game via a BSDE approach.
Hernandez-Hernandez, Simon, and Zervos~\cite{HSZ2015} provide an 
in-depth analysis of the smoothness of the value function and show
that the optimal strategy may not be unique when the controller enjoys a first-move advantage.
Kwon and Zhang~\cite{KZ2015} 
  investigate a game of irreversible investment with singular controls and strategic exit.  
  They characterize  a class of market perfect equilibria and  identify a set of conditions under which the outcome of the game may be unique despite the multiplicity of the equilibria. 
  De Angelis and Ferrari~\cite{DF2016}
  establish the connection between singular controls and optimal stopping times for a non-zero-sum two-player game.  
  Bensoussan and Frehse~\cite{BF2000} consider an $N$-player game with regular controls and obtain the NE via the maximum principle approach. 
    The closest to our problem setting are those of Mannucci~\cite{Mannucci2004} and Hamadene and Mu~\cite{HM2014}. They consider the fuel follower problem in a finite-time horizon with a bounded velocity, and establish the existence of an NE of a two-player game. The former analyzes a strongly coupled parabolic system and the latter uses 
the BSDE technique.  

\paragraph{Related work on MFGs.}

The theory of MFGs has enjoyed tremendous growth since the pioneering works
of Huang,  Malham{\'e}, and Caines~\cite{HMC2006} and Lasry and Lions~\cite{LL2007}. 
The MFG provides a tractable approach to the otherwise challenging $N$-player stochastic games.  
However, except for the general result that the NE of an MFG is an $\epsilon$-Nash equilibrium to the $N$-player game
(see, for instance \cite{HMC2006} and Cardaliaguet,  Delarue, Lasry, and Lions \cite{CDLL2015}  for regular controls and Guo and Joon \cite{GJ2018} for singular controls), 
there are very  limited results on comparing the NE of $N$-player games and MFGs.   The exceptions are 
Carmona, Fouque, and Sun~\cite{CFS2013} for systemic risks, 
Nutz and Zhang \cite{NZ2017} for competition, Lacker and Zariphopoulou~\cite{LZ2017} for portfolio management, and 
\cite{Bardi2012}. 
All these results, however, are with regular controls.  
For MFGs with singular controls,  notions of relaxed stochastic maximal principle
 or relaxed admissible controls have been introduced to establish the existence of optimal controls; see, for instance, Fu and Horst~\cite{FH2017}, Hu, {\O}ksendal, and Sulem~\cite{HOS2014}, and Zhang~\cite{Zhang2012}. \\



\section{$N$-Player Fuel Follower Game}

\subsection{Preliminary: Single Player}\label{fuel_prob}

 
 The classic fuel follower problem is as follows. 
 Consider a probability space 
 $(\Omega, \mathcal{F},$ $\{\mathcal{F}_t\}_{t\geq 0}, \mathbb{P} )$ with a standard Brownian motion $\{B_t\}_{ t \geq 0}$. The position of the object $X_t$ is assumed to be
 \begin{eqnarray}\label{x-dynamic}
X_t = x + B_t +\xi_t^{+} - \xi_t^{-}, \quad X_{0-} = x,
\end{eqnarray}
 where the pair of control $(\xi^+, \xi^-)$ is a non-decreasing, c\`adl\`ag process. 
 The goal  of the controller  is to solve for the value function $v(x)$ of the following optimization problem,
 \begin{eqnarray}\label{fuelproblem}
v(x) = \inf_{(\xi^+,\xi^{-})\in  \mathcal{U}} \mathbb{E} \int_0^{\infty} e^{-\alpha t} \left[  h(X_t)dt + d \check{\xi}_t \right],
\end{eqnarray}
where  the admissible control set $\cal U$ is
\begin{eqnarray*}
 \mathcal{U} &&:= \left\{ (\xi_t^+,\xi_t^-) \given \xi_t^+ \mbox{ and } \xi_t^- \mbox{ are }
\textrm{$\mathcal{F}^{X_{t-}}$-progressively measurable, c\`adl\`ag, non-decreasing}, \right. \\ 
&& \left. \hspace{63pt} \textrm{ with } \mathbb{E} \left[\int_0^{\infty}e^{-\alpha t} d\xi_t^+ \right] <\infty, \mathbb{E} \left[\int_0^{\infty}e^{-\alpha t} d\xi_t^- \right] <\infty,  \textrm{and } \xi_{0-}^+ = \xi_{0-}^-=0   \right\}.
\end{eqnarray*}
Here $\alpha>0$ is a discount factor,  $\{\mathcal{F}^{X_t}\}_{t\ge 0}$ is the natural filtration of $\{X_t\}_{t\ge 0}$,
and $\check{\xi_t} = \xi_t^+ + \xi_t^-$ is the total accumulative amount of controls {up to time t}, called ``fuel usage'', hence the term 
{\it fuel follower problem}. 
In addition, under the assumption 
\begin{itemize}[font=\bfseries]
\item[A1:] The function $h: \mathbb{R} \rightarrow \mathbb{R}$ is assumed to be convex,  symmetric,  twice differentiable, with $h(0) \geq 0$, $h^{\prime \prime}(x)$  decreasing on $\mathbb{R}^{+}$,  and $0 < k< h^{\prime \prime}(x)\leq K$ for some constants $K>k>0$,
\end{itemize}
Problem (\ref{fuelproblem}) is solved (see \cite{BSW1980} and \cite{Karatzas1982}) by  analyzing the associated HJB equation 
\begin{eqnarray}
\label{originalHJB}
\min \left\{ \frac{1}{2} v_{xx}(x)+h(x)-\alpha v(x) , 1-v_x(x),1+v_x(x) \right\} = 0,
\end{eqnarray}
where $v_{x}$ and $v_{xx}$ are the first and second order derivatives of $v$ with respect to $x$, respectively.
The optimal control $\{\xi_t^{*+} ,\xi_t^{*-} \}_{t \ge 0}$ is shown to be of a bang-bang type given by
\begin{eqnarray*}
\xi_t^{*+} &=& \max \left\{0,\max_{0 \leq u \leq t} \left\{-x - {B}_u + \xi_u^{*-}  - c \right\}\right \},\\
\xi_t^{*-} &=& \max \left\{0,\max_{0 \leq u \leq t} \left\{x +{B}_u + \xi_u^{*+}  - c \right\} \right\},
\end{eqnarray*}
where the threshold $c>0$ is the unique positive solution to
\begin{eqnarray}\label{constant_c}
\frac{1}{\sqrt{2 \alpha}} \tanh \left(c \sqrt{2 \alpha} \right) = \frac{p_1^{'}(c)-1}{p_1^{''}(c)},
\end{eqnarray} 
with
  \begin{eqnarray*}
  p_1(x) &=& \mathbb{E} \left[\int_0^{\infty}e^{-\alpha t}h(x+B_t)dt \right] \\
  &=& \frac{1}{\sqrt{2\alpha}} \left( e^{-x \sqrt{2\alpha}} \int_{-\infty}^x h(z)e^{z\sqrt{2\alpha}}dz+e^{x \sqrt{2\alpha}} \int_x^{\infty} h(z)e^{-z\sqrt{2\alpha}}dz \right).
  \end{eqnarray*}
  The corresponding value function $v(x) \in \mathcal{C}^2(\mathbb{R})$ is given by 
\begin{eqnarray}\label{single-solution}
    v(x)=\left\{
                \begin{array}{ll}
                  -\frac{p_1^{\prime \prime}(c) \cosh \left(x\sqrt{2 \alpha}\right)}{2 \alpha \cosh \left(c \sqrt{2 \alpha} \right)} + p_1(x), \qquad \ \  0 \leq & x \leq c,\\
                  v(c)+(x-c), \qquad &x \geq c,\\
                 v(-x), \qquad & x <0.
                \end{array}
              \right.
  \end{eqnarray}
  
In other words, it is optimal for the controller to apply a ``minimal'' push to keep the object within  $[-c,c]$.
Mathematically, the controlled process is a Brownian motion reflected at the boundaries $c$ and $-c$. The minimal
push corresponds to the local time of the Brownian motion  at $c$ and $-c$.   See Figure~\ref{figure1}.

\begin{figure}[H] 
       
     \centering \includegraphics[width=0.4\columnwidth]{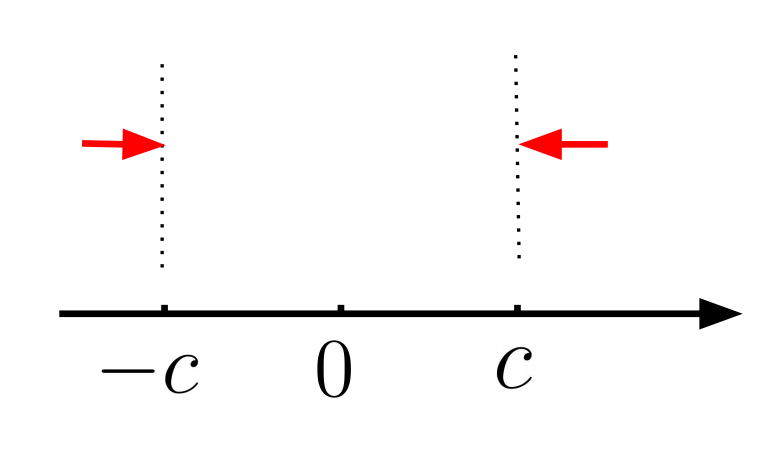}
       
        \caption{
                \label{figure1} 
               Optimal control of the single player problem
        }
\end{figure}


\subsection{$N$-Player  Fuel Follower Game}

Now suppose there are $N$ controllers, with each controller controlling one object.  For simplicity, let us call such a pair of  controller and object  a ``player''.  The game is  for each player to  stay as close as possible to other
players. 

This $N$-player game  can be formulated as follows.  
Let  $\left(X_t^1, \ldots, X_t^N \right)\in \mathbb{R}^N$  be the positions of  players such that for  $i=1, \ldots, N$,
\begin{eqnarray}\label{Ndynamics}
X_t^i =x^i+ B_t^i +  \xi_t^{i,+} -  \xi_t^{i,-}, 
\end{eqnarray}
with $ (X_{0-}^1, \cdots, X_{0-}^N) = (x^1, \cdots,  x^N)=:\pmb{x}$, where $(B_t^1, \ldots, B_t^N)$ is an $N$-dimensional standard Brownian motion on $\mathbb{R}^N$. 
Let  $m_t^{(N)}=\frac{\sum_{i=1}^N X_t^i}{N}$ be the center of these $N$ players at time $t$, with $m_{0-}^{(N)} = \frac{\sum_{i=1}^N x^i}{N}$.  Let $h(X_t^i-m_t^{(N)})$ be  the distance  between player $i$ and the center $ m_t^{(N)}$ at time $t$. The goal of each player $i$ is to minimize, over all admissible controls 
$\left(\xi^1, \ldots, \xi^N \right) \in \mathcal{S}_N$,  the following payoff function 
\[ 
J^i (x^1, \ldots, x^N;\xi^1, \ldots, \xi^N) = \mathbb{E} \int_0^{\infty} e^{-\alpha_i t} \left[  h\left(X^i_t-\rho m_t^{(N)}\right)dt + d \check{\xi}_t^i\right], \label{N-game}  \tag{\textbf{N-player}}
\]
where $\check{\xi}^i ={\xi}^{i,+} +{\xi}^{i,-}  $. 
Here the admissible control set $ \mathcal{S}_N$ is defined as
\begin{eqnarray}\label{S_N}
\begin{aligned}
\mathcal{S}_N :=& \left\{(\xi^1, \ldots, \xi^N) \ \left\vert \   \xi^j = (\xi^{j,+}, \xi^{j,-}) \in \mathcal{U}^j_N, \mathbb{P}\left(d \xi_t^j(\pmb{x}) d\xi_t^i(\pmb{x} )>0 \right)=0, \right.\right.\\
  & \hspace{70pt}  \left.    \mbox{ for any } t>0, \pmb{x} \in \mathbb{R}^N,  i, j \in \{1,\ldots,N\}\mbox{ and } i \neq j  \right\},
\end{aligned}
\end{eqnarray}
with
\begin{eqnarray*}\label{A_N}
\begin{aligned}
 \mathcal{U}^j_N= & \left\{ (\xi_t^{j,+},\xi_t^{j,-})\ \left \vert \  \xi_t^{j,+} \mbox{ and }\xi_t^{j,-} \mbox{ are } \mathcal{F}^{(X^1_{t-},\ldots,X^N_{t-})}\mbox{-progressively measurable, c\`adl\`ag, non-decreasing,} \right.\right.\\
&  \hspace{50pt} \left.{}  \mbox{ with }\mathbb{E} \left[ \int_0^{\infty}e^{-\alpha_j t}d\xi_t^{j,+} \right] <\infty,  \enspace \mathbb{E} \left[ \int_0^{\infty}e^{-\alpha_j t}d\xi_t^{j,-} \right] <\infty, \enspace \xi_{0-}^{j,+}=0, \enspace \xi_{0-}^{j,-}=0 \right\} ,
\end{aligned}
\end{eqnarray*}
where $\alpha_j>0$ is the discount factor for player $j$ and $\{\mathcal{F}^{(X_t^1,\ldots,X_t^N)}\}_{t \geq 0}$ is the natural filtration of $\{(X_t^1,\ldots,X_t^N)\}_{t\geq 0}$.
The condition in Eqn. (\ref{S_N})  
\begin{eqnarray}\label{no_jump_N_player}
 \mathbb{P} \left(d \xi _t^i (\pmb{x})  d \xi _t^ j(\pmb{x})> 0  \right) = 0, \ \   \ \  \mbox{ for any }\pmb{x} \in \mathbb{R}^N, t\ge 0, i \neq j
 \end{eqnarray}
is to  facilitate designing feasible control policies when  controls involve jumps.

{\begin{remark} \label{remark:SPrho}
Mathematically, one may replace the running cost function $h(X_t^i-m_t^N)$ by
$h(X_t^i-\rho m_t^N+\eta)$,  with $\rho\ge0$ indicating the strength of interactions among players as in  \cite{HCM2007} and \cite{HMC2006}. We choose to fix $\rho=1$ and $\eta=0$ for clearer model interpretations for the fuel follower problem. 
Indeed, adding a scaling factor $\rho$ and a constant $\eta$ will not change the derivation of solutions except for minor notational changes. In fact, as will be  shown in Section \ref{section_skorokhod} and Appendix A, the construction of NEs will be simpler when $\rho\ne 1$.

\end{remark}
}

Throughout the paper, unless otherwise specified,  we will for simplicity and without loss of generality  $\alpha_1=\cdots=\alpha_N=\alpha$.   
 (See Section \ref{section:discussion} for further sensitivity analysis with respect to $\alpha$.)


\subsection{Solution to the $N$-Player Game}

There are various criteria to measure the performance of strategies in  stochastic  games. For instance,  Pareto Optimality (PO) and Nash Equilibrium (NE) 
 provide two  distinct views, with NE focusing on stability  and  PO on efficiency. 
An NE framework can be further defined depending on the admissible  strategies, resulting in  open-loop NEs,  closed-loop NEs, and the Markovian NEs. See Carmona~\cite{Carmona2016} for more discussions on these concepts. 

In this paper, we will focus on the Markovian  NE, also known as the closed-loop 
NE with a feedback form, specified  below.

\begin{definition}\label{nash}
A tuple of admissible controls $\pmb{\xi}^* = ({\xi}^{1*},\ldots,{\xi}^{N*}) \in \mathcal{S}_N$ is a Markovian NE of the stochastic game
(\ref{N-game}), if for any $ i=1,\ldots, N$, $\pmb{X}_{0-} = \pmb{x}$, and  any $(\pmb{\xi}^{-i*},{\xi}^i) \in \mathcal{S}_N$,
 the following inequality holds,
\begin{eqnarray*}
J^i \left(\pmb{x}; \pmb{\xi}^{*}\right)  \leq J^i \left(\pmb{x}; (\pmb{\xi}^{-i*}, {\xi}^{i})\right).
\end{eqnarray*} 
Here  strategies ${\xi}^{i*}$ and ${\xi}^{i}$ are deterministic functions of time $t$ and $\pmb{X}_t = (X_t^1,\ldots,X_t^N)$, with the notation  $(\pmb{x}^{-i},y^i):=(x^1,\cdots,x^{i-1},y^i,x^{i+1},\cdots,x^N)$ for any $\pmb{x}\in \mathbb{R}^N$.
 $J^i \left(\pmb{x}; \pmb{\xi}^{*}\right)$ is called the NE value associated with  $\pmb{\xi}^{*}$.

\end{definition}

\subsection{NE Solutions}



The NE solution  will be derived in two steps. 
The first is to derive and analyze the associated HJB system. A verification theorem which provides  sufficient conditions for the NE values will be presented, along with a solution to the HJB system.  The second step
is to construct the  corresponding NEs, by solving an associated Skorokhod problem. 

 \subsection{NE and the HJB System}
First,  

\begin{definition}
[\emph{Action} and \emph{waiting} regions]\label{regions}
Player $i$'s \emph{action region} $ \mathcal{A}_i$ is defined as
$$ \mathcal{A}_i:= \left\{\left.\pmb{x} \in \mathbb{R}^N \right\vert d \xi^i(\pmb{x}) \neq 0 \right\},$$ 
and her \emph{waiting region} is $\mathcal{W}_i  = \mathbb{R}^N \setminus  \mathcal{A}_i$.  Denote $\mathcal{A}^{-i} = \cup_{j \neq i}  \mathcal{A}_j $ 
and $\mathcal{W}_{-i} = \cap_{j \neq i}  \mathcal{W}_j$. 
\end{definition}

%


Next, 
a simple heuristic conditional argument via the Dynamic Programming Principle  leads to the following  HJB system.

{Given $\mathcal{A}_i \cap  \mathcal{A}_j = \emptyset$, for any $ i \neq j$,}

\begin{eqnarray*}\label{HJB-N}
    \mbox{(HJB-N)} \left\{
                \begin{array}{ll}
             &   \min \left\{-\alpha w^i + h\left(\frac{N-1}{N} \left(x^i- \frac{\sum_{j \neq i} x^j}{N-1} \right) \right) + \frac{1}{2} \left(\sum_{j=1}^N w^i_{x^j x^j} \right),   1-w^i_{x^i},1+w^i_{x^i} \right\} = 0,      \\
             &     \hspace{216pt}    \mbox{     for any } \pmb{x} \in \mathcal{W}_{-i} \label{fv-N-player-hjb-1} , \\
           
 &{w^i_{x^j} =0,  \hspace{173pt} \mbox{ for any } \pmb{x} \in  \mathcal{A}_{j}, \mbox{ for any } j \neq i } .

                \end{array}
              \right.
  \end{eqnarray*}

\vskip 2mm
The derivation of  {(HJB-N)} can be  illustrated with the case of $N=2$.
In this case,  if $ (x^1,x^2) \in \mathcal{A}_2$,  $\Delta \xi^{2*} \neq 0$. By the definition of NE, player one 
is  not expected to suffer a loss as otherwise she will have  incentives to take actions. {Therefore, 
$w^1(x^1,x^2) = w^1(x^1, x^{2}+\Delta \xi^{2*,+}-\Delta \xi^{2*,-})$, letting $\Delta \xi^{2*,\pm}\rightarrow 0$, we have $w_{x^2}^1=0$ in $\mathcal{A}_2$.}
 If $ (x^1,x^2) \in \mathcal{W}_2$,  $\Delta \xi^{2*} = 0$, then the control problem for player one becomes a classical single player control problem. Therefore, $w^1(x^1,x^2)$ satisfies
$$\min \left\{-\alpha w^1 +h\left(\frac{x^1-x^2}{2} \right) + \frac{1}{2}\left(w^1_{x^1 x^1} + w^1_{x^2 x^2}\right),1-w^1_{x^1},1+w^1_{x^1}\right\} = 0 \mbox{ in } \mathcal{W}_2.$$ 
Here $-\alpha w^1 +h\left(\frac{x^1-x^2}{2} \right) + \frac{1}{2}\left(w^1_{x^1 x^1} + w^1_{x^2 x^2}\right)=0 $ corresponds to  $\Delta \xi^{1*}=0$, $1-w^1_{x^1}=0$ corresponds to  $\Delta \xi^{1*,+}> 0$, and  $1+w^1_{x^1}=0$  corresponds to  $\Delta \xi^{1*,-}> 0$. Finally, 
$\mathcal{A}_1 \cap \mathcal{A}_2 =\emptyset$ ensures Eqn. (\ref{no_jump_N_player}).

 \vskip 1.5mm

Based on the above HJB system, the following sufficient conditions for an NE can be established.
\begin{theorem}[Verification theorem] \label{verification_theorem}
 For any $i=1, \ldots, N$,  suppose  ${\xi}^{i*} \in  \mathcal{U}^i_N$  and the corresponding $w^i(.) = J^i(.;\pmb{\xi}^*)$
 satisfies the following
 \begin{enumerate}[label=(\roman*)]
 \item \label{item1} $\pmb{\xi}^* := ({\xi}^{1*},\ldots,{\xi}^{N*}) \in \mathcal{S}_N$,
 \item \label{item2} 
 \begin{eqnarray} \label{hjb-n}
 \min  \left\{-\alpha w^i +h \left(\frac{N-1}{N} \left(x^i- \frac{\sum_{j \neq i} x^j}{N-1} \right) \right) + \frac{1}{2} \sum_{j=1}^N w^i_{x^j x^j},  1-w^i_{x^i},1+w^i_{x^i} \right\} = 0,  
 \end{eqnarray}
 for any $\pmb{x} \in \overline{\mathcal{W}_{-i}}$, and
 $$w^i_{x^j}(\pmb{x})= 0,$$   for any  $\pmb{x} \in  \mathcal{A}_{j}$. 
  \item \label{item3} (Transversality Condition.) \  \  
$ 
\limsup_{T \rightarrow \infty} \mathbb{E}[e^{-\alpha T} w^i(\pmb{X}_T)] = 0,$
 \item \label{item4} $w^i (\pmb{x}) \in  \mathcal{C}^2(\overline{\mathcal{W}_{-i}})$,
  \item  \label{item7_add} $w_{x^j}^i (\pmb{x})$ is bounded in $\overline{\mathcal{W}_{-i}}$, for any $j=1,2,\cdots,N$,
 \item \label{item5} there exists a convex function $u^i(\pmb{x})\in  \mathcal{C}^2(\mathbb{R}^N)$ such that $u^i(\pmb{x}) = w^i(\pmb{x})$  on $\overline{\mathcal{W}_{-i}}$, 
 \item \label{item6}   $\mbox{ for any } \xi^i \in  \mathcal{U}_{N}$ such that $(\pmb{\xi}^{-i*},\xi^{i}) \in \mathcal{S}_N$, the controlled dynamic $(\pmb{X}_t^{-i*},X_t^i)$ is in $ {\mathcal{W}_{-i}}$ $\mathbb{P}$-a.s. at any time $t$.
 \end{enumerate} 
Then $\pmb{\xi}^*$ is an NE with value $w^i$. 
\end{theorem}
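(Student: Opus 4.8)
The plan is to verify the Nash equilibrium property directly from the definition by a verification-type argument, showing that for each fixed player $i$, with the other players' strategies frozen at $\pmb{\xi}^{-i*}$, the control $\xi^{i*}$ achieves the infimum of $J^i$ over admissible $\xi^i$ with $(\pmb{\xi}^{-i*},\xi^i)\in\mathcal{S}_N$. First I would fix $i$ and an arbitrary such competing control $\xi^i$, and let $\pmb{X}_t=(\pmb{X}_t^{-i*},X_t^i)$ be the corresponding controlled dynamics. By hypothesis \ref{item6}, this process stays in $\mathcal{W}_{-i}$ almost surely for all $t$. By \ref{item5} there is a global $\mathcal{C}^2$ convex extension $u^i$ of $w^i$ to all of $\mathbb{R}^N$, and I would work with $u^i$ so that Itô's formula (in its form for semimartingales with jumps) can be applied without worrying about the boundary of $\mathcal{W}_{-i}$.

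The key computation is to apply Itô's formula to $e^{-\alpha t}u^i(\pmb{X}_t)$ between $0$ and $T$, separating the continuous and pure-jump parts of each control. The drift/second-order terms produce $e^{-\alpha t}\left(-\alpha u^i + \tfrac12\sum_{j}u^i_{x^jx^j}\right)dt$; on $\mathcal{W}_{-i}$ this equals $e^{-\alpha t}\left(-\alpha w^i + \tfrac12\sum_j w^i_{x^jx^j}\right)dt$, which by the HJB inequality \ref{item2} is $\ge -e^{-\alpha t}h\big(\tfrac{N-1}{N}(x^i-\tfrac{\sum_{j\neq i}x^j}{N-1})\big)dt$. The terms from the continuous part of $\xi^{i,\pm}$ contribute $\pm e^{-\alpha t}w^i_{x^i}\,d\xi^{i,c,\pm}_t$, and from $\xi^{j,\pm}$ for $j\neq i$ contribute $w^i_{x^j}\,d\xi^{j,c,\pm}_t=0$ since $w^i_{x^j}=0$ on $\mathcal{A}_j$ where these increments live. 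The jump terms: for player $i$'s jumps, convexity of $u^i$ together with $|w^i_{x^i}|\le 1$ (from the gradient constraints in \ref{item2}, valid on $\overline{\mathcal{W}_{-i}}$, extended via $u^i$) gives $u^i(\pmb{X}_t)-u^i(\pmb{X}_{t-})\ge -|\Delta\check\xi^i_t|$; for player $j\neq i$'s jumps, the mean value theorem along the $x^j$-direction combined with $w^i_{x^j}=0$ on $\mathcal{A}_j$ (and convexity controlling the path) yields a zero contribution. The $\mathbb{P}(d\xi^i_td\xi^j_t>0)=0$ condition from $\mathcal{S}_N$ ensures jumps of different players do not coincide, so these cases are cleanly separable. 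Taking expectations kills the martingale part (using \ref{item7_add} for integrability of the stochastic integral), and rearranging gives
\begin{eqnarray*}
\mathbb{E}[e^{-\alpha T}u^i(\pmb{X}_T)] - u^i(\pmb{x}) \ge -\mathbb{E}\int_0^T e^{-\alpha t}\Big[h\big(X_t^i-m_t^{(N)}\big)dt + d\check\xi^i_t\Big].
\end{eqnarray*}
Letting $T\to\infty$ and invoking the transversality condition \ref{item3} (noting $u^i=w^i$ along the trajectory), I obtain $u^i(\pmb{x}) = w^i(\pmb{x}) \le J^i(\pmb{x};(\pmb{\xi}^{-i*},\xi^i))$.

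It then remains to show the bound is attained at $\xi^{i*}$, i.e.\ $J^i(\pmb{x};\pmb{\xi}^*)=w^i(\pmb{x})$, which holds because $w^i$ was \emph{defined} as $J^i(\,\cdot\,;\pmb{\xi}^*)$ in the hypothesis, so the inequality chain $w^i(\pmb{x}) = J^i(\pmb{x};\pmb{\xi}^*) \le \inf J^i(\pmb{x};(\pmb{\xi}^{-i*},\xi^i))$ closes the argument; combined with \ref{item1} ($\pmb{\xi}^*\in\mathcal{S}_N$) this is precisely the NE condition in Definition \ref{nash}. I expect the main obstacle to be the careful treatment of the jump terms in Itô's formula: one must decompose each control into its continuous and jump parts, handle the fact that along a jump of $\xi^j$ the process may traverse regions where $w^i_{x^j}$ changes, and use the global convex extension $u^i$ together with the gradient bounds to control the increments uniformly — all while keeping track of which player's action region each increment is supported on, using the non-simultaneity condition \eqref{no_jump_N_player}. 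A secondary technical point is justifying that the local martingale from the Brownian integrals is a true martingale, for which the boundedness of $w^i_{x^j}$ on $\overline{\mathcal{W}_{-i}}$ in \ref{item7_add} (and boundedness of the corresponding derivatives of $u^i$ along the trajectory) is exactly what is needed.
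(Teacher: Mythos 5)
Your proposal follows essentially the same route as the paper: apply the It\^o--Tanaka--Meyers formula to $e^{-\alpha t}u^i(\pmb{X}_t^{-i*},X_t^i)$ using the global $\mathcal{C}^2$ convex extension from \ref{item5}, bound the drift term via the HJB inequality \ref{item2}, control the singular-control and jump contributions via the gradient constraints $|w^i_{x^i}|\le 1$ and $w^i_{x^j}=0$ on $\mathcal{A}_j$ together with the non-simultaneity condition, kill the Brownian integral using \ref{item7_add}, and pass $T\to\infty$ with the transversality condition to conclude $w^i(\pmb{x})\le J^i(\pmb{x};(\pmb{\xi}^{-i*},\xi^i))$, closing with the hypothesis $w^i=J^i(\cdot;\pmb{\xi}^*)$. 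Your treatment of the jump terms is in fact spelled out in more detail than the paper's rather terse argument, and no step deviates from the paper's logic.
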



\begin{proof}


Given any $\xi^i \in  \mathcal{U}^i_N$ such that $(\pmb{\xi}^{-i*},\xi^{i}) \in \mathcal{S}_N$, fixing the control $(\xi_t^{i,+} , \xi_t^{i,-})$ such that 
\begin{eqnarray*}
X_t^i &=& x^i+ B_t^i + \xi_t^{i,+}- \xi_t^{i,-}, \\
X_t^{j*} &=& x^j +B_t^j + \xi_t^{j*,+}- \xi_t^{j*,-}, \ \    j \neq i.
\end{eqnarray*}
Applying the It\^{o}-Tanaka-Meyers formula (Theorem 14.3.2 in \cite{CE2015}) to  $e^{-\alpha t} u^i(\pmb{X}_t^{-i*},X_t^{i})$
yields
\begin{eqnarray*}
&&\mathbb{E}\left[e^{-\alpha T} u^i(\pmb{X}_T^{-i*},X_T^{i}) \right] - u^i(x^1,x^2,\ldots,x^N) \\
=&& \mathbb{E} \left[ \int_0^T e^{-\alpha t} \left( \frac{1}{2}\sum_{j=1}^N u^i_{x^j x^j}(\pmb{X}_t^{-i*},X_t^{i}) -\alpha u^i(\pmb{X}_t^{-i*},X_t^{i}) \right) dt \right]\\ 
&+& \mathbb{E} \left[ \int_{[0,T)} e^{-\alpha t} \left( (u^i_{x^i}(\pmb{X}_t^{-i*},X_t^{i}) d \xi_t^{i,+} - u^i_{x^i}(\pmb{X}_t^{-i*},X_t^{i}) d \xi_t^{i,-}) \right) \right]\\
&+& \mathbb{E} \left[ \sum_{0\leq t <T } e^{-\alpha t}  \left( \Delta u^i \left(\pmb{X}_t^{-i*},X_t^{i}\right)-\nabla u ^i \left(\pmb{X}_t^{-i*},X_t^{i}\right) \cdot \Delta (\pmb{X}^{-i*}_t,X_t^{i} )\right) \right]\\
&+&\mathbb{E} \int_0^T e^{{-\alpha}t} \left( \sum_{j=1}^N u^i_{x^j} (\pmb{X}_t^{-i*},X_t^{i}) d B_t^j \right).
\end{eqnarray*}
Note that \ref{item6} implies that with control $(\pmb{\xi}^{-i*},\xi^{i})\in \mathcal{S}_{N}$,   $(\pmb{X}_t^{-i*},X_t^{i}) \in {\mathcal{W}_{-i}}$,  $\mathbb{P}$-a.s..  By conditions \ref{item7_add} and  \ref{item5}, $u^i_{x^j}$ is bounded on $ \overline{\mathcal{W}_{-i}}$ for any $1\le j\le N$, therefore
$\int_0^T e^{{-\alpha}t} \left( \sum_{j=1}^N u^i_{x^j} (\pmb{X}_t^{-i*},X_t^{i}) d B_t^j \right)$ is square integrable, hence
a uniformly integrable martingale. Now conditions \ref{item2},  \ref{item4}, \ref{item7_add}, and  \ref{item5} suggest
\begin{eqnarray*}\label{verify_last}
e^{-\alpha T} \mathbb{E}[w^i(\pmb{X}_T^{-i*},X_T^{i})] + \mathbb{E}\int_{0}^T e^{-\alpha t} \left[ h\left( \frac{N-1}{N}\left(X_t^i - \frac{\sum_{j \neq i}X_t^{j*}}{N-1} \right)\right) dt + d\check{\xi}_t^{i}\right]  \geq  w^i(x^1,\ldots,x^N).
\end{eqnarray*}
Taking $T \rightarrow \infty$,  the transversality condition \ref{item3} implies 
\begin{eqnarray}\label{lower-bond-1}
 w^i \left(x^1,\ldots,x^N \right) \leq J^i \left(x^1,\ldots,x^N;\pmb{\xi}_t^{-i*},\xi_t^{i} \right),
 \end{eqnarray}
 for any ${\xi}^i$ such that $\left(\pmb{\xi}_t^{-i*},\xi_t^{i} \right) \in \mathcal{S}_N$.
\end{proof}

The next step is to solve the HJB system, with a focus on a threshold-type solution.
That is,  there exists a constant $c_N>0$ (to be determined) such that the action region $\mathcal{A}_i$ and the waiting $\mathcal{W}_i$ of player $i$ can be  decomposed into
\begin{eqnarray}\label{two_regions}
\begin{aligned}
 \mathcal{A}_i = \left\{E_i^{-} \cup E_i^{+} \right\}\cap Q_i, \quad \mathcal{W}_i = \mathbb{R}^N /  \mathcal{A}_i ,
\end{aligned}
\end{eqnarray}
where
\begin{eqnarray}\label{3-regions}
\begin{aligned}
E_i^{-} &= \left\{ \left(x^1,\cdots,x^N \right) \in \mathbb{R}^N \middle| \enspace x^i-\frac{\sum_{j \neq i}x^j}{N-1} \leq -c_N \right\},\\
E_i^{+} &= \left\{ \left(x^1,\cdots,x^N \right) \in \mathbb{R}^N \middle| \enspace x^i-\frac{\sum_{j \neq i}x^j}{N-1} \geq c_N \right\},
\end{aligned}
\end{eqnarray}
with the partition
\begin{eqnarray*}\label{region-i-charge}
Q_i &=& \left\{ \pmb{x} \in \mathbb{R}^N  \middle| \quad  \left|x^i - \frac{\sum_{j \neq i} x^j}{N-1}\right|  \geq  \left|x^k- \frac{\sum_{j \neq k} x^j}{N-1} \right|, \mbox{ for any } k < i ; \right.\\ 
&&\hspace{56pt}\left.{}  \left|x^i - \frac{\sum_{j \neq i} x^j}{N-1}\right|  >  \left|x^k- \frac{\sum_{j \neq k} x^j}{N-1} \right| , \mbox{ for any } k > i  \right\}.
\end{eqnarray*}
Note the modification of the action region $\mathcal{A}_i$  by $Q_i$ is to avoid simultaneous jumps by multiple players. 
By definition of $Q_i$, in the event of multiple players in the ``action region'', the player who is the farthest away from the center intervenes first; in the event that multiple players have the same largest distance to the center, the player with the  biggest  index intervenes. 

 
Now it is easy to check that 
\begin{itemize}
\item $\cup_{i=1}^N Q_i = \mathbb{R}^N$, $Q_i$ is a convex cone for any $i=1,\ldots,N$,
\item $\mathcal{W}_i \neq \emptyset$, for any $i =1,\ldots,N$,
\item $ \mathcal{A}_i \cap  \mathcal{A}_j = 0$, for  all $i \neq j$.
\end{itemize}

%
%

Now, a candidate function $w^i(\pmb{x}) \in  \mathcal{C}^2(\overline{\mathcal{W}_{-i}})$ should satisfy the following three properties:
First, $w^i(\pmb{x})$ is symmetric on $x^i = \frac{\sum_{j \neq i} x^j}{N-1}$ such that 
\begin{eqnarray}\label{condition-sym}
w^i_{x^i} \left( \pmb{x}^{-i},\frac{\sum_{j \neq i} x^j}{N-1} \right) =0.
\end{eqnarray}
Second, if $0 \leq x^i-\frac{\sum_{j \neq i} x^j}{N-1}<c_N$, then $w^i(\pmb{x})$ solves
\begin{eqnarray}\label{condition-donothing}
\alpha w^i (\pmb{x})  = h \left( \frac{N-1}{N} \left( x^i- \frac{\sum_{j \neq i} x^j}{N-1} \right) \right)+ \frac{1}{2} \sum_{j=1}^N w^i_{x^j,x^j}(\pmb{x}).
\end{eqnarray}
Third, if  $x^i - \frac{\sum_{j \neq i} x^j}{N-1} \geq   c_N$, then player $i$ jumps by a distance of $x^i - \frac{\sum_{j \neq i} x^j}{N-1} -   c_N$. Combined, 
\begin{eqnarray}\label{condition-push}
w^i(\pmb{x}) =x^i - \frac{\sum_{j \neq i} x^j}{N-1} -   c_N + w^i  \left( \pmb{x}^{-i}, \frac{\sum_{j \neq i} x^j}{N-1} +  c_N \right).  
\end{eqnarray}
The general solution satisfying both (\ref{condition-donothing}) and (\ref{condition-sym}) is given by
\begin{eqnarray*}
w^i(\pmb{x}) = B \cdot \cosh \left( \sqrt{\frac{2(N-1) \alpha}{N}} \left(x^i- \frac{\sum_{j \neq i} x^j}{N-1} \right) \right) + p_N \left(x^i-\frac{\sum_{j \neq i} x^j}{N-1} \right),
\end{eqnarray*}
with 
\begin{eqnarray} \label{p_N}
p_N(x) = \mathbb{E} \left[ \int_0^{\infty} e^{-\alpha t} h\left( \frac{N-1}{N} \left(x +\sqrt{\frac{N}{N-1}} B_t \right) \right)dt \right].
\end{eqnarray}
Here
$p_N(x)$  is a particular solution to (\ref{condition-donothing}) and derived from  the cost of ``doing nothing'', and 
 $B$ is constant yet to  be  determined.

 Now matching the values of $w_{x^i}(\pmb{x})$ and $w_{x^i,x^i}(\pmb{x})$ along $x^i = \frac{\sum_{j \neq i} x^j}{N-1} +c_N$ determines $c_N$ and $B$:  $c_N$ is the unique positive solution to
\begin{eqnarray}\label{c2}
\frac{1}{\sqrt{\frac{2(N-1)\alpha}{N}}} \tanh \left(c \sqrt{\frac{2(N-1)\alpha}{N}} \right) = \frac{p'_N(c)-1}{p''_N(c)},
\end{eqnarray}
and \begin{eqnarray*}
B =   -\frac{p_N^{\prime \prime}(c_N)}{\frac{2(N-1) \alpha}{N} \cosh \left( c_N\sqrt{\frac{2(N-1)\alpha}{N}} \right)} .
\end{eqnarray*}
Finally, define

 \begin{eqnarray*}\label{u-i}
    u^{i}(x^1,\ldots,x^N) = \left\{
                \begin{array}{ll}   
          u^{i} \left(x^1,\ldots,\frac{\sum_{j \neq i}x^j}{N-1}-c_N,\ldots,x^N \right) - c_N-x^i+\frac{\sum_{j \neq i}x^j}{N-1}, \hspace{20pt}   \pmb{x} \in E_i^{-}  ,\\
                -\frac{p_N^{\prime \prime} \left(c_N\right) \cosh \left(\sqrt{\frac{2(N-1) \alpha}{N}} \left(x^i-\frac{\sum_{j \neq i}x^j}{N-1}\right)\right)}{\frac{2(N-1) \alpha}{N} \cosh \left(c_N\sqrt{\frac{2(N-1)\alpha}{N}} \right)} + p_N \left(x^i-\frac{\sum_{j \neq i}x^j}{N-1} \right), \\ 
                  \hspace{240pt}  \pmb{x} \in  \{E_i^{+} \cup E_i^{-}\} ^c,\\
         x^i -\frac{\sum_{j \neq i}x^j}{N-1}- c_N  +u^i \left(x^1,\ldots,\frac{\sum_{j \neq i}x^j}{N-1}+c_N,\ldots,x^N \right) ,
     \hspace{20pt}      \pmb{x} \in E_i^{+}.
                \end{array}
              \right.
 \end{eqnarray*}
Then it is easy to check that $u^i \in \mathcal{C}^2(\mathbb{R}^N)$ and the candidate solution $w^i$  satisfies (HJB-N) and  Theorem \ref{verification_theorem}.

\subsection{NE and the Skorokhod Problem (SP)} \label{section_skorokhod}
Given the NE solution to the $N$-player game,  the corresponding NE 
can be constructed by finding a solution to an associated SP on an unbounded polyhedron and with a constant oblique reflection  on each face.
First,  define $\mathcal{CW}$  the common waiting regions of all players as
 \begin{eqnarray}\label{nonaction_region}
\mathcal{CW} &:=& \left\{ \pmb{x} \in \mathbb{R}^N \enspace \middle|  \enspace \left|x^i-\frac{\sum_{j \neq i}x^j}{N-1} \right| < c_N, \mbox{ for any } i=1,\ldots,N \right\} \\
&=&  \left\{\pmb{x} \in \mathbb{R}^N \enspace \middle| \enspace \pmb{n}_j \cdot \pmb{x} >-c_N \sqrt{\frac{N}{N-1}}, \mbox{ for } j=1,\ldots, 2N \right\} \nonumber  \\
&=& \cap_{i=1}^N ({E}_i^-\cup {E}_i^+)^c, \nonumber
\end{eqnarray}
with the normal direction of each face  given by
\begin{eqnarray}
\begin{aligned}
\pmb{n}_i &= \frac{\sqrt{N-1}}{\sqrt{N}} \left( -\frac{1}{N-1}, \cdots, -\frac{1}{N-1}, 1,-\frac{1}{N-1}, \cdots, -\frac{1}{N-1} \right), \\
\pmb{n}_{i+N} &=-\pmb{n}_i.
 \end{aligned}
\end{eqnarray}
where 1 is in the $i^{th}$ position of $\sqrt{\frac{N}{N-1}}\pmb{n}_i$.
Note that  $\mathcal{CW} $ is an unbounded polyhedron with all of its $2N$ boundaries parallel to the direction $(1,1,\cdots,1).$ 

For $j=1, \cdots, 2N$, define the $2N$ faces of ${\mathcal{CW}}$ 
\begin{eqnarray} \label{face}
F_j = \{\pmb{x} \in  \partial \mathcal{CW} \ \vert \ \pmb{n}_j\cdot \pmb{x} =-c_N\},
\end{eqnarray}
and 
\begin{equation}\label{di}
\pmb{d}_i =(0, \cdots, 1, \cdots, 0),  \ \  \pmb{d}_{i+N} = -\pmb{d}_i, \quad i =1,\ldots, N,
\end{equation}
such that 
$\pmb{d}_j \cdot \pmb{n}_j =\frac{\sqrt{N-1}}{\sqrt{N}},$
where $1$ is in the $i^{th}$ position of $\pmb{d}_i$.

Now, the  NE of (\ref{N-game}) can be fully characterized by the solution to the SP with the data
 $(\pmb{x},  \mathcal{CW},
(\pmb{d}_1, \cdots, \pmb{d}_{2N}), \{\pmb{B}_t\}_{t\ge 0})$. (See Appendix A for more background materials.)

\begin{theorem}\label{thm: solution skorokhod}
There exits a unique strong solution to  SP with the data  $(\pmb{x},  \mathcal{CW},
(\pmb{d}_1, \cdots, \pmb{d}_{2N}), \{\pmb{B}_t\}_{t\ge 0})$ defined in (\ref{nonaction_region}) and
(\ref{di}). More precisely, the reflected process $\pmb{X}^*_t$ with $\pmb{X}^*_0=\pmb{x} \in \overline{\mathcal{CW}}$ is defined as
\begin{eqnarray*}
X_t^{i*} = x^i+B_t^i+\int_0^t 1_{\{\pmb{X}_s^*\in F_i\}} d \eta^i(s) -\int_0^t 1_{\{\pmb{X}_s^*\in F_{i+N}\}} d \eta^i(s), \quad i=1,2,\cdots,N,
\end{eqnarray*}
where $\eta^j(t)$ is a non-decreasing process with $\eta^j(0)=0$. Moreover, if $\pmb{x} \notin F_k \cap F_j$ for any $k \neq j, k, j=1, 2, \cdots, 2N,$
\begin{eqnarray}\label{no-edgy}
\mathbb{P}(\pmb{X}^*_t \notin F_k \cap F_j \mbox{ for any } k \neq j, t \geq 0)=1.
\end{eqnarray}
\end{theorem}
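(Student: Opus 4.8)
The plan is to establish existence and uniqueness of the strong solution to the SP on the unbounded polyhedron $\mathcal{CW}$ by verifying the standard structural conditions of the Dupuis--Ishii / Harrison--Reiman theory for reflecting diffusions in polyhedral domains, and then to handle the non-accumulation at edges separately by a local-time argument. First I would record the geometry of $\mathcal{CW}$: it is the intersection of $2N$ half-spaces $\{\pmb{n}_j\cdot\pmb{x} > -c_N\sqrt{N/(N-1)}\}$, all $2N$ faces contain the direction $(1,\dots,1)$ in their boundary hyperplanes, and the reflection directions are the coordinate vectors $\pmb{d}_i = \pm e_i$. The key observation is that opposite faces $F_i$ and $F_{i+N}$ have $\pmb{n}_{i+N}=-\pmb{n}_i$ and reflection directions $\pmb{d}_{i+N}=-\pmb{d}_i$, so in the single coordinate $x^i - \frac{\sum_{j\neq i}x^j}{N-1}$ the reflection is exactly the classical two-sided Skorokhod reflection on an interval $[-c_N,c_N]$. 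This suggests a change of variables: set $y^i = x^i - \frac{1}{N-1}\sum_{j\neq i}x^j$ (equivalently $\pmb{y} = M\pmb{x}$ for the explicit matrix $M = \frac{N}{N-1}(I - \frac1N \pmb{1}\pmb{1}^\top)$), under which $\mathcal{CW}$ becomes the slab-like box-type region $\{|y^i|<c_N\sqrt{N/(N-1)}\}$ intersected with the constraint that $\sum_i y^i$ is unconstrained, and the reflection becomes normal (orthogonal) reflection on each face.

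Carrying this out, the main steps are: (i) verify that the reflection field satisfies the completeness/compatibility conditions of Dupuis--Ishii (Theorem in \cite{DW1996} or \cite{DI1991}) — namely that at each point $\pmb{x}$ on the boundary there is a convex cone of admissible reflection directions consistent with the normals of the active faces, which here follows because the active faces at any edge are a subset of $\{F_1,\dots,F_{2N}\}$ whose normals $\{\pmb{n}_j\}$ together with $\{\pmb{d}_j\}$ satisfy $\pmb{d}_j\cdot\pmb{n}_j>0$ and the relevant $2\times 2$ (or $k\times k$) reflection matrices are nonsingular; (ii) invoke the existence and pathwise uniqueness theorem for such SPs driven by Brownian motion to get the unique strong solution $\pmb{X}^*_t$ with the representation stated, where $\eta^i$ is the local time pushing on the pair of faces $F_i,F_{i+N}$; (iii) in the transformed coordinates, because each $y^i$ solves an independent one-dimensional Skorokhod problem on $[-c_N\sqrt{N/(N-1)},c_N\sqrt{N/(N-1)}]$ driven by a (correlated) Brownian motion, one can also construct the solution explicitly via the one-dimensional two-sided reflection map (Tanaka's formula / the explicit max formula), which gives an alternative self-contained proof of existence and uniqueness and makes the local-time structure transparent.

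For the non-accumulation claim \eqref{no-edgy}, the plan is to show that the reflected process a.s. never simultaneously touches two distinct faces $F_k$ and $F_j$. If $k$ and $j$ are not an opposite pair $\{i,i+N\}$, then $F_k\cap F_j$ is a codimension-$2$ affine subspace, and the non-degenerate (uniformly elliptic, since the covariance of $\pmb{B}_t$ is the identity and the linear map $M$ has rank $N$ on the relevant components) diffusion cannot hit it — this is the standard fact that a nondegenerate $N$-dimensional diffusion does not hit sets of codimension $\geq 2$, applied here either directly in the $\pmb{y}$ coordinates or by comparison with the pair of independent signed distances $(\pmb{n}_k\cdot\pmb{X}^*,\pmb{n}_j\cdot\pmb{X}^*)$, which behave like a $2$-dimensional reflected diffusion with nondegenerate covariance and hence do not reach the corner $\{(-c_N\sqrt{N/(N-1)}, -c_N\sqrt{N/(N-1)})\}$ from the interior. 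If $\{k,j\}=\{i,i+N\}$ for some $i$, then $F_i\cap F_{i+N}=\emptyset$ since the slab $|y^i|\le c_N\sqrt{N/(N-1)}$ has $c_N>0$, so there is nothing to prove. I expect the main obstacle to be step (i)–(ii): rigorously checking that the Dupuis--Ishii conditions hold at the edges where up to $N$ faces are simultaneously active (the faces $F_{i_1},\dots,F_{i_m}$ with all distinct unsigned indices), i.e. that the associated reflection matrix is an $M$-matrix / satisfies the "set $\mathcal{B}$" condition guaranteeing a well-posed constrained problem; here the change of variables should make this manageable because after transformation the problem decouples into independent one-dimensional reflections, so the reflection matrix is essentially diagonal, but one must be careful that the Brownian driver's correlation introduced by $M$ does not spoil the argument — it does not, because correlation of the driving noise is irrelevant to well-posedness of the Skorokhod map, only the geometry of domain and directions matters.
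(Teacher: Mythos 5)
Your overall architecture (verify the structural conditions of the Dai--Williams/Dupuis--Ishii theory for existence and pathwise uniqueness, then treat edge-avoidance separately) matches the paper's, but two of your key steps contain genuine errors. First, the proposed change of variables does not decouple the problem. The map $\pmb{y}=M\pmb{x}$ with $M=\tfrac{N}{N-1}\bigl(I-\tfrac1N\pmb{1}\pmb{1}^{\top}\bigr)$ has rank $N-1$, not $N$ (it annihilates $\pmb{1}$, so $\sum_i y^i\equiv 0$), and more importantly the reflection directions $\pm\pmb{e}_i$ do \emph{not} become normal in the $\pmb{y}$-coordinates: a push of size $\delta$ on face $F_i$ changes $y^i$ by $+\delta$ but also changes every $y^k$, $k\neq i$, by $-\delta/(N-1)$. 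Hence $\pmb{n}_i\cdot\pmb{d}_j\neq 0$ for $j\neq i$, each coordinate $y^i$ is driven by the local times of \emph{all} $2N$ faces, and your step (iii) --- solving $N$ independent one-dimensional two-sided Skorokhod problems via the explicit max formula --- is not available. The problem is irreducibly an oblique, coupled reflection, which is exactly why the paper must verify the conditions \textbf{(S.a)}, \textbf{(S.b)} of \cite{DW1996} for every maximal face-index set (showing $|\pmb{K}|\le N-1$ via $\det(N_{\mathrm{mat}})$ and exhibiting the positive linear combinations explicitly), and then establish strong uniqueness by localization: truncating $\mathcal{CW}$ to the bounded polyhedra $\mathcal{W}_k=\mathcal{CW}\cap\{|\sum_i x^i|<k\}$ with two added normally-reflecting caps, checking the Dupuis--Ishii condition \textbf{(S.c)} there, and proving $\tau_k\to\infty$ a.s. You do not address the unboundedness of $\mathcal{CW}$ at all for the strong-uniqueness step, and this localization is one of the main technical contributions of the proof.

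Second, your argument for \eqref{no-edgy} is not valid. The fact that a nondegenerate free diffusion does not hit sets of codimension $\ge 2$ does not transfer to the \emph{reflected} process: near an edge $F_k\cap F_j$ the local-time terms are active, and whether an obliquely reflected Brownian motion in a two-dimensional wedge reaches the corner depends on the reflection angles --- this is precisely the content of Varadhan--Williams \cite{VW1985}, cited in this paper, where corner attainment occurs for a nonempty range of parameters. So ``a $2$-dimensional reflected diffusion with nondegenerate covariance does not reach the corner'' is false in general, and the projected pair $(\pmb{n}_k\cdot\pmb{X}^*,\pmb{n}_j\cdot\pmb{X}^*)$ is in any case not a self-contained two-dimensional reflected diffusion, since its drift toward or away from the corner involves the local times of the other $2N-2$ faces through the off-diagonal products $\pmb{n}_k\cdot\pmb{d}_l$. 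The paper instead verifies that the data $(\mathcal{CW},\pmb{d}_1,\dots,\pmb{d}_{2N})$ satisfy Williams' skew-symmetry condition \cite{WILLIAMS1987}, which is the correct quantitative criterion ensuring the SRBM does not charge the intersections of faces, and then again localizes to pass from the bounded truncations to $\mathcal{CW}$. To repair your proof you would need to replace both the decoupling device and the polarity argument by explicit verifications of this kind.
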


The idea to prove Theorem \ref{thm: solution skorokhod} is to show first 
 the existence of a weak solution to the SP and next the uniqueness of the strong solution 
 to  the SP.
Then  according to Corollary 3.23 in Karatzas and Shreve~\cite{KS2012} and Proposition
1 in Engelbert \cite{Engelbert1991}, there exists a unique strong solution to the SP.
The  existence of a weak solution to the SP is straightforward, following  ~\cite{DW1996}.
 The uniqueness of a strong solution is established by
 extending the result of  Dupuis and Ishii~\cite{DI1993} on a bounded polyhedron to  an unbounded one,  via the localization technique.  
 Moreover, the reflection vectors $(\pmb{d}_1,\cdots,\pmb{d}_{2N})$ satisfy the \textit{skew symmetry} condition for the polyhedron $\mathcal{CW}$ according to \cite{WILLIAMS1987}, hence  an additional localization argument shows that  (\ref{no-edgy}) holds.
 The detailed proof is  provided in Appendix A.

\subsection{Extended Mapping to $\mathbb{R}^N \setminus \overline{\mathcal{CW}}$}

Up to now the NE is derived  when   $\pmb{x} \in \overline{\mathcal{CW}}$. 
 When $\pmb{x} \in \mathbb{R}^N \setminus \overline{\mathcal{CW}}$,  the NE would be  to jump sequentially to some point $\hat{\pmb{x}} \in \partial \overline{\mathcal{CW}}$, and afterwards continues according to the SP with data  $(\hat{\pmb{x}},  \mathcal{CW},
(\pmb{d}_1, \cdots, \pmb{d}_{2N}), \{\pmb{B}_t\}_{t\ge 0})$ where $\hat{\pmb{x}} \in \overline{\mathcal{CW}}$. 
 
 Algorithm $\bf 1$ describes how players sequentially jump to $\overline{\mathcal{CW}}$.
 In order to show that this algorithm is well defined, one needs to make sure that such jumps stop  in finite steps or  converge to a limit point on $\hat{\pmb{x}} \in \partial \overline{\mathcal{CW}}$, and that the total distance of such sequential jumps is bounded. The detailed argument is given  in Appendix B, with the illustration of Figure \ref{figure_jumps}.

 \begin{algorithm}[H]
\caption{Policy: Sequential jumps when $\pmb{x} \notin \overline{\mathcal{CW}}$. }\label{initial_jump}
\begin{algorithmic}[1]
\Procedure{Sequential}{$\pmb{x}$}
\State Define mapping,
\begin{eqnarray}\label{mapping_alg_1}
\begin{aligned}
i &= \pi(\pmb{y}) \quad \mbox{when} \quad \pmb{y} \in  \mathcal{A}_i,\\
\emptyset &= \pi(\pmb{y})  \quad \mbox{when} \quad  \pmb{y} \in \overline{\mathcal{CW}}.
\end{aligned}
\end{eqnarray}
\State $\hat{\pmb{x}} \gets \pmb{x}$, $k \gets 0$
\While{$\pi(\hat{\pmb{x}})\not= \emptyset$}
\State  $\lambda^* \gets \arg\min \left\{ \lambda >0 \ \left\vert \ \hat{\pmb{x}}+ \lambda \pmb{e}_{\pi(\hat{\pmb{x}})} \in  \partial E^{-}_{\pi(\hat{\pmb{x}})}  \mbox{ or }  \hat{\pmb{x}} - \lambda \pmb{e}_{\pi(\hat{\pmb{x}})} \in  \partial E^{+}_{\pi(\hat{\pmb{x}})}  \right.\right\}$ \Comment{ $e_j$ is a unit vector in $\mathbb{R}^N$ with $j$th component to be 1}
\If{$\hat{\pmb{x}}+ {\lambda^*} \pmb{e}_{\pi(\hat{\pmb{x}})} \in  \partial E^{-}_{\pi(\hat{\pmb{x}})}$}
\State $\pmb{\nu}_0 \gets \pmb{e}_{\pi(\hat{\pmb{x}})}$ 
\Else
\State $\pmb{\nu}_0 \gets - \pmb{e}_{\pi(\hat{\pmb{x}})}$ 
\EndIf
\State $\hat{\pmb{x}} \gets \hat{\pmb{x}} + \lambda^* \pmb{\nu_0}$ \Comment{Control of player $\pi(\hat{\pmb{x}})$}
\State $ \pmb{x}_{k}\gets \hat{\pmb{x}}$
\State $k \gets k+1$
\EndWhile\label{euclidendwhile}
\State \textbf{return} $\hat{\pmb{x}}, \{\pmb{x} _{k}\}$\Comment{$\hat{\pmb{x}} \in \partial {\mathcal{CW}}$}
\EndProcedure
\end{algorithmic}
\end{algorithm}

Note  that this algorithm gives an $\epsilon$-NE in finite steps. In the case that the starting point is in the intersection of faces, a small perturbation in the algorithm and in the NE value will recover the case of $\pmb{x}\in \mathcal{CW}$.
In summary,

\begin{theorem}[NE for the $N$-player game] \label{NE_point_N} 
Under Assumption {\bf A1},  a Markovian NE for  game (\ref{N-game})  is given by 
\begin{eqnarray}\label{opt-control-n-player}
\begin{aligned}
\xi_t^{i*,+} &= \Delta_0^{i*,+} + \int_0^t \textbf{1}_{ \left\{\pmb{X}_s^* \in F_i \right\} }d \eta^i(s) ,\\
\xi_t^{i*,-} &= \Delta_0^{i*,-} + \int_0^t \textbf{1}_{ \left\{\pmb{X}_s^* \in F_{i+N} \right\}} d \eta^{i+N}(s),
\end{aligned}
\end{eqnarray}
where $\mathcal{CW}$ is given in (\ref{nonaction_region}), $\pmb{X}_t^*$ is the controlled dynamic with $\pmb{X}_0^* = \hat{\pmb{x}} = \pmb{x} + \pmb{\Delta_0}^{*,+} -  \pmb{\Delta_0}^{*,-} \in \overline{\mathcal{CW}},$
with $ \eta^j(t) = \int_0^t \textbf{1}_{ \{\pmb{X}_s^* \in F_j \} } d \eta^j(s) $ and $\eta^j(0)=0$ $(j=1,2,\cdots,2N)$, the  jumps at time $0$ are
\begin{eqnarray}\label{jump_pm}
\begin{aligned}
\Delta_0^{i*,+}  &=& \sum_{k}  \textbf{1}_{ \left\{ \pmb{x}_{k} \in  \mathcal{A}_i \right\}} \left(x_{k+1}^i - x_{k}^i \right)_{+},\\
\Delta_0^{i*,-} &=& \sum_{k} \textbf{1}_{\left\{ \pmb{x}_{k} \in  \mathcal{A}_i \right\}} \left(x^i_{k}-x^i_{k+1} \right)_{+},
\end{aligned}
\end{eqnarray}
with $\{\pmb{x}_{k}\}$ the sequence of jumps prescribed by Algorithm {\bf 1}.

The corresponding NE value   $v^i(x^1,\ldots,x^N) := J^i (x^1,\ldots,x^N;\pmb{\xi}^{*})$
is given by
\begin{eqnarray}\label{n-value-i}
    v^{i}(x^1,\ldots,x^N) =
     \left\{
                \begin{array}{ll}
                 v^{i} \left(x^1,\ldots,x^{j-1},\frac{\sum_{k \neq j}x^k}{N-1}-c_N,x^{j+1},\ldots,x^N \right) ,  \\
        \hspace{160pt}   \pmb{x} \in E_j^-\cap  \mathcal{A}_{j} , \mbox{ for any } j \neq i,\\
      v^{i} \left(x^1,\ldots,\frac{\sum_{j \neq i}x^j}{N-1}-c_N,\ldots,x^N \right) - c_N-x^i+\frac{\sum_{j \neq i}x^j}{N-1},  \\
            \hspace{230pt}   \pmb{x} \in E_i^{-} \cap  \overline{\mathcal{W}}_{-i} ,\\
                -\frac{p_N^{\prime \prime}(c_N) \cosh \left(\sqrt{\frac{2(N-1) \alpha}{N}}\left(x^i-\frac{\sum_{j \neq i}x^j}{N-1}\right)\right)}{\frac{2(N-1) \alpha}{N}  \cosh \left(c_N\sqrt{\frac{2(N-1)\alpha}{N}}\right)} + p_N \left(x^i-\frac{\sum_{j \neq i}x^j}{N-1}\right),   \\
    \hspace{190pt}                 \pmb{x}   \in  (E_i^{-}\cup E_i^{+} )^c\cap  \overline{\mathcal{W}}_{-i},\\
    x^i -\frac{\sum_{j \neq i}x^j}{N-1}- c_N  +v^i \left(x^1,\ldots,\frac{\sum_{j \neq i}x^j}{N-1}+c_N,\ldots,x^N \right) , \\
      \hspace{230pt}    \pmb{x} \in E_i^{+} \cap \overline{\mathcal{W}}_{-i} ,\\
           v^{i} \left(x^1,\ldots,x^{j-1},\frac{\sum_{k \neq j}x^k}{N-1}+c_N,x^{j+1},\ldots,x^N \right) ,  \\
     \hspace{160pt}          \pmb{x} \in E_j^+\cap  \mathcal{A}_{j} , \mbox{ for any } j \neq i.      
                \end{array}
              \right.
 \end{eqnarray}
Here $E_i^{+}$, $E_i^{-}$  are given in (\ref{3-regions}), and $ \mathcal{A}_i$ and $\mathcal{W}_i$  defined in (\ref{two_regions}).


\end{theorem}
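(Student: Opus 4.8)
The plan is to verify directly that the control tuple $\pmb{\xi}^*$ given in \eqref{opt-control-n-player}--\eqref{jump_pm} together with the functions $v^i$ in \eqref{n-value-i} satisfies all seven hypotheses of the Verification Theorem (Theorem \ref{verification_theorem}), and to handle the initial-jump phase (when $\pmb{x}\notin\overline{\mathcal{CW}}$) separately through Algorithm 1. I would first treat the generic case $\pmb{x}\in\overline{\mathcal{CW}}$. Here $\pmb\Delta_0^{*,\pm}=0$, and $\pmb{X}^*_t$ is exactly the reflected process provided by Theorem \ref{thm: solution skorokhod}; that theorem already gives existence and uniqueness of a strong solution and, via the skew-symmetry argument, that $\pmb{X}^*_t$ almost surely never hits an edge $F_k\cap F_j$, which is precisely what makes the no-simultaneous-jump condition \eqref{no_jump_N_player} hold and hence \ref{item1} ($\pmb\xi^*\in\mathcal S_N$). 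Condition \ref{item6} — that any unilateral deviation of player $i$ keeps the state in $\mathcal W_{-i}$ — follows because the other players $j\ne i$ still reflect at their own faces $F_j,F_{j+N}$, so the coordinates stay inside $\cap_{j\ne i}\mathcal W_j=\mathcal W_{-i}$ regardless of what player $i$ does; this uses that the reflection direction $\pmb d_j$ only moves coordinate $j$.

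Next I would check the PDE/regularity conditions \ref{item2}, \ref{item4}, \ref{item5}, \ref{item7_add} for the candidate $v^i=w^i$. By construction in Section ``NE and the HJB System,'' the middle branch of \eqref{n-value-i} solves \eqref{condition-donothing} with the symmetry condition \eqref{condition-sym}, the ``push'' branches are affine extensions built to make $w^i\in\mathcal C^2$ across $x^i-\frac{\sum_{j\ne i}x^j}{N-1}=\pm c_N$ (this is where the specific choice of $c_N$ in \eqref{c2} and of $B$ enters — matching $w^i_{x^i}$ and $w^i_{x^i x^i}$ at the threshold), and the $\mathcal A_j$-branches ($j\ne i$) are flat in $x^j$, giving $w^i_{x^j}=0$ there. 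One then verifies $1\pm w^i_{x^i}\ge 0$ on $\overline{\mathcal W_{-i}}$ and $\alpha w^i-h(\cdot)-\tfrac12\sum_j w^i_{x^jx^j}\ge 0$ on $\mathcal A_i\cap\overline{\mathcal W_{-i}}$, using Assumption {\bf A1} (convexity/symmetry of $h$, the bounds $0<k<h''\le K$, and $h''$ decreasing on $\mathbb R^+$) exactly as in the single-player argument of \cite{BSW1980,Karatzas1982}; boundedness of $w^i_{x^i}$ (it lies in $[-1,1]$) gives \ref{item7_add}, and boundedness in the other coordinates follows from the flatness/affine structure. For \ref{item5} one exhibits a global $\mathcal C^2$ convex extension $u^i$ — the explicit $u^i$ displayed just before Section \ref{section_skorokhod} works, with convexity inherited from convexity of $\cosh$, convexity of $p_N$ (which follows from convexity of $h$), and the $\mathcal C^2$-matching. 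The transversality condition \ref{item3} follows from the linear growth of $v^i$ (affine in the push regions, and $p_N$ has at most quadratic growth by {\bf A1}) together with $\mathbb E|\pmb X^*_T|^2=O(T)$ and the factor $e^{-\alpha T}$; I would also record here that $\mathbb E[\int_0^\infty e^{-\alpha t}d\xi^{i*,\pm}_t]<\infty$, which needs a bound on the expected local time $\mathbb E[\eta^i(\infty)]$, obtainable by applying Itô--Tanaka to $e^{-\alpha t}u^i$ along the equilibrium path itself.

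The remaining piece is the initial-jump phase. When $\pmb x\notin\overline{\mathcal{CW}}$, I invoke Algorithm 1 and the analysis promised for Appendix B: one must show the sequence $\{\pmb x_k\}$ either terminates in finitely many steps or converges to a point $\hat{\pmb x}\in\partial\mathcal{CW}$, with $\sum_k\|\pmb x_{k+1}-\pmb x_k\|<\infty$, so that $\pmb\Delta_0^{*,\pm}$ is well defined and finite. Granting that, the value on $E_j^\pm\cap\mathcal A_j$ branches of \eqref{n-value-i} is defined by the jump-continuity relations $v^i(\pmb x)=v^i(\text{projected }\pmb x)$ for $j\ne i$ and \eqref{condition-push} for $j=i$, which are consistent with the dynamic programming heuristic ``a player in her action region is indifferent to the jump'' and with ``player $i$ pays the distance she pushes''; since each jump lands the state one step closer to $\overline{\mathcal{CW}}$ and the post-jump continuation has already been verified, a telescoping/limit argument transfers the NE inequality \eqref{lower-bond-1} to all of $\mathbb R^N$. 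I expect the main obstacle to be twofold: (a) the verification of the HJB \emph{inequalities} (the $\min\ge 0$ and the non-$\mathcal C^2$ gluing via Itô--Tanaka--Meyer across the reflecting faces) on the unbounded polyhedral geometry, where one must be careful that the convex majorant $u^i$ genuinely dominates and that the singular (local-time) terms have the right sign; and (b) the finiteness/convergence of Algorithm 1's sequential jumps, since a naive bound on each step does not obviously sum — one needs a contraction-type estimate showing the residual distance to $\overline{\mathcal{CW}}$ shrinks geometrically (which is why, as Remark \ref{remark:SPrho} notes, the case $\rho\ne 1$ is easier).
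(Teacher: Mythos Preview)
Your proposal is correct and follows essentially the same route as the paper: assemble the verification-theorem hypotheses \ref{item1}--\ref{item6} from the explicit $w^i$ and its $\mathcal C^2$ extension $u^i$, invoke Theorem~\ref{thm: solution skorokhod} for the reflected dynamics inside $\overline{\mathcal{CW}}$, and handle $\pmb x\notin\overline{\mathcal{CW}}$ via Algorithm~1. One small correction on your anticipated obstacle (b): the convergence of the sequential jumps does \emph{not} require a geometric contraction estimate --- the paper's argument (Appendix~B) is that the ordered extreme positions $x^{(1)}_k$ and $x^{(N)}_k$ are monotone (nondecreasing, respectively nonincreasing) and bounded, so the sequence $\{\pmb x_k\}$ converges by sandwiching, and a short contradiction argument then forces the limit onto $\partial\mathcal{CW}$; the total jump distance is likewise bounded by the monotonicity of the extremes.
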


Figure (\ref{fig:region_3d}) shows the region partition when $N=3$.
$\mathcal{CW}$, the unbounded polytope, is surrounded by the action regions $\mathcal{A}_i$, $i=1, 2, 3$.
Figure (\ref{fig:region_screenshot}) shows the action region $\mathcal{A}_1$ of player one and the common waiting region $\mathcal{CW}$ of all players. 
\begin{figure}[H]
\centering
\begin{subfigure}{.33\textwidth}
  \centering
  \includegraphics[width=1.0\linewidth]{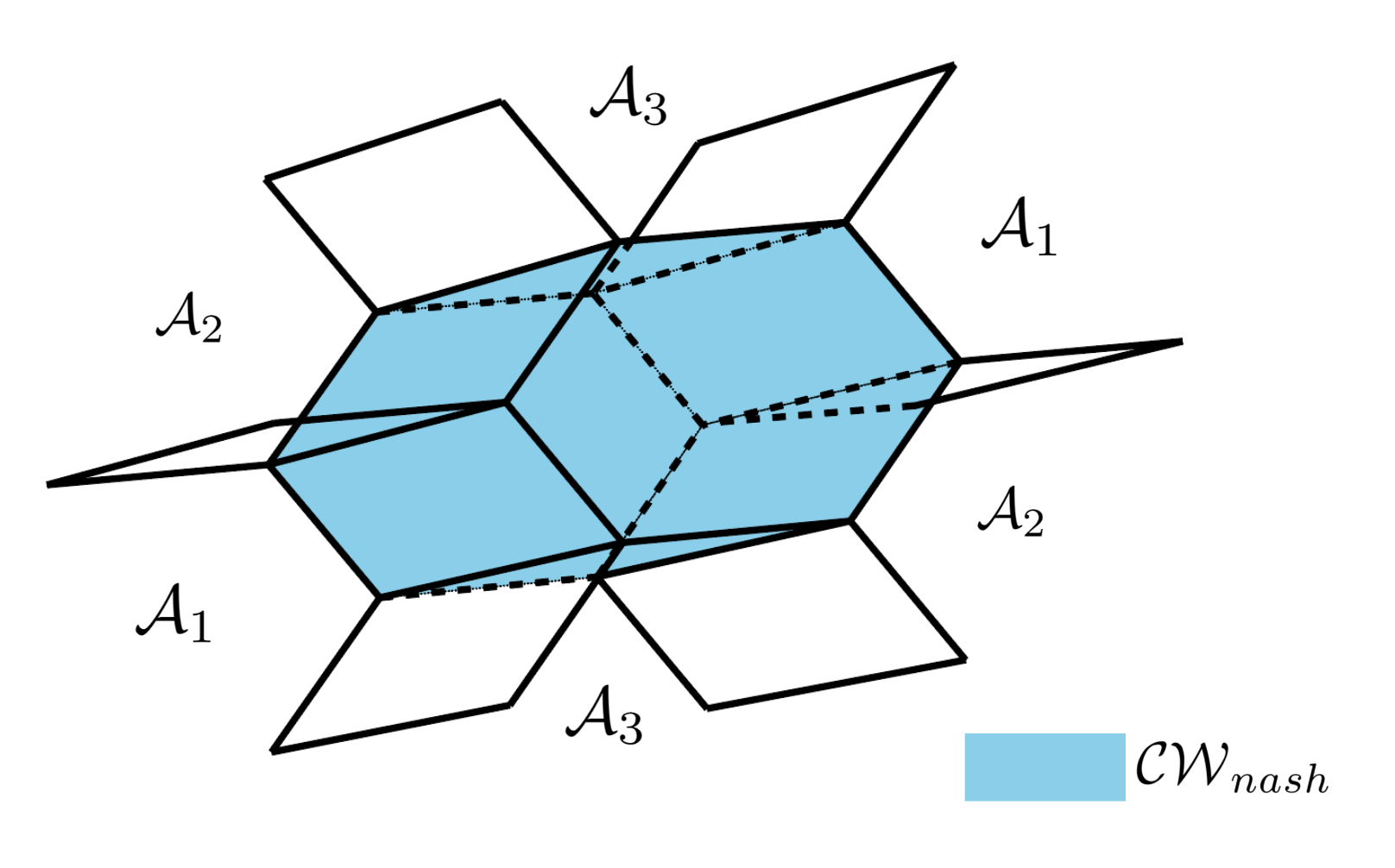}
  \caption{$\mathcal{CW}$ and $\mathcal{A}_i$ ($i=1,2,3$)}
  \label{fig:region_3d}
\end{subfigure}
\begin{subfigure}{.33\textwidth}
  \centering
  \includegraphics[width=1.0\linewidth]{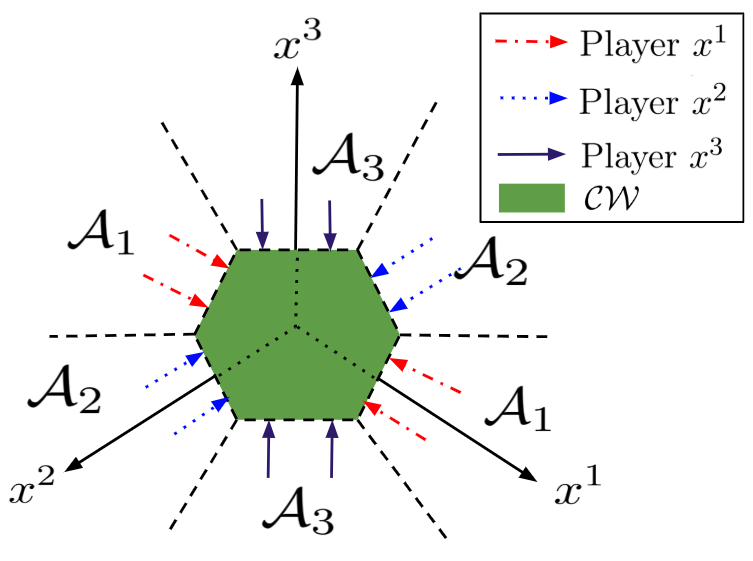}
  \caption{$\mathcal{A}_1$ and $\mathcal{W}_{1}$,\\a bird's-eye view from $(1,1,1)$\\to $(0,0,0)$}
  \label{fig:region_screenshot}
\end{subfigure}
\caption{Region partition when $N=3$}
\label{figure3}
\end{figure}



\section{MFG for the Fuel Follower Problem}

\label{section: MFG}

Take  $N$  identical, rational, and interchangeable players, whose initial positions are random in $\mathbb{R}^N$. 
Let $N \rightarrow \infty$,  the MFG for the fuel follower problem is to find {a closed-loop control in feedback form of}
\begin{eqnarray}\label{game}
\begin{aligned}
v(x) =& \inf_{(\xi^{+},\xi^{-}) \in  \mathcal{U}_{\infty}} J_{(\infty)} (x;\xi_t^{+},\xi_t^{-}) \\
=& \inf_{(\xi^{+},\xi^{-}) \in  \mathcal{U}_{\infty}} \mathbb{E} \int_0^{\infty} e^{-\alpha t} \left[ h(X_t -m_t)dt + d \check{\xi_t} \vert X_{0-}=x \right],\\
 \mbox{such that} & \qquad d X_t = d B_t + d \xi_t^ {+} - d \xi_t^{-}, \\
 &\hspace{20pt} \ \ X_{0-}\sim \mu_{0-},\,\, m_{0-} =  \int x \mu_{0-}(dx),
\end{aligned}
\end{eqnarray}
{where $\mu_t = \lim_{N \rightarrow \infty} \frac{ \sum_{i=1}^N \textbf{1}_{\{X^i_t \}}}{N}$ is the distribution of $X_t$ and $m_t =  \lim_{N \rightarrow \infty} \frac{ \sum_{i=1}^N X^i_t}{N}=\int x \mu_t(dx)$ is the mean position of the population at time $t$}, with $\mu_{0-}$ symmetric around $m_{0-}$.

{
Note that one could write an alternative MFG formulation with  
$$\tilde{v}(\mu_{0-}):=\inf_{(\xi^{+},\xi^{-}) \in  \mathcal{U}_{\infty}} \mathbb{E} \int_0^{\infty} e^{-\alpha t} \left[ h(X_t -m_t)dt + d \check{\xi_t} \right].$$ 
$v(x)$ defined in  \eqref{game} can be viewed as $\tilde{v}(\mu_{0-}|X_{0-}=x)$ with $X_{0-}=x$ as some sample drawn from $\mu_{0-}$. 
Clearly $\tilde{v}(\mu_{0-})$ can be solved by analyzing $v(x)$ as  $\tilde{v}(\mu_{0-}) = \mathbb{E}_{\mu_{0-}}[v(X_{0-})]$. This connection is also explored in  Section 2.2.2 of \cite{LZ2017}. }

The admissible control set for MFG is 
\begin{eqnarray*}
\begin{aligned}
 \mathcal{U}_{\infty}= & \left\{ (\xi_t^+,\xi_t^-) \given  \xi_t^+ \mbox{ and }\xi_t^- \mbox{ are } \mathcal{F}_t^{(X_{t-}, m_{t-})}\mbox{-progressively measurable, c\`adl\`ag, non-decreasing,} \right.\\
&  \hspace{50pt} \left.{}  \mbox{ with }\mathbb{E} \left[ \int_0^{\infty}e^{-\alpha t}d\xi_t^+ \right] <\infty , \enspace  \mathbb{E} \left[ \int_0^{\infty}e^{-\alpha t}d\xi_t^- \right] <\infty, \enspace \xi_{0-}^+=0, \enspace \xi_{0-}^-=0 \right\} .
\end{aligned}
\end{eqnarray*}

\subsection{NE Solution to the MFG}

\begin{definition}[NE to  MFG (\ref{game})]
An NE to the MFG (\ref{game}) is a  pair of Markovian control $(\xi_t^{*,+},\xi_t^{*,-})_{t \geq 0}$ and a mean function $\{m^*_t\}_{ t \ge 0}$ such that
\begin{itemize}
\item ${v}^*(x) =J_{(\infty)}\left(x;\xi^{*,+},\xi^{*,-}|\{\mu^*_t\}_{t\ge0}\right)=\min_{\xi \in \mathcal{U}_{\infty}}J_{(\infty)}\left(x;\xi^{+},\xi^{-}|\{\mu^*_t\}_{t\ge0}\right)$,
\item $P_{X^{*}_{t}}=\mu_t^*$, and $m_t^* = \int x P_{X^{*}_{t}} (dx)$ is the mean function of $X_t^{*}$ where $X_t^{*}$ is the controlled dynamic under $(\xi_t^{*,+},\xi_t^{*,-})_{t \ge 0}$.
\end{itemize}
${v}^*(x)$ is called the NE value of the MFG associated with ${\xi}^*$.
\end{definition}

\begin{theorem}[NE to  MFG (\ref{game})]\label{MFG-SOL}
There exists an NE to the MFG (\ref{game}), \begin{eqnarray}\label{optimal-mfg-control}
\begin{aligned}
\xi^{*,+}_t &=& \max\left\{0,\max_{0 \leq u \leq t}\{ {m_{0-}}-x -B_u + \xi_u^{*,-} -c\} \right\},\\
\xi^{*,-}_t &=& \max \left\{0,\max_{0 \leq u \leq t}\{ x-{m_{0-}} + B_u + \xi_u^{*,+} -c\} \right\},
\end{aligned}
\end{eqnarray}
and the corresponding NE value   is 

\begin{eqnarray}\label{stationary-solution}
    v^*(x)=\left\{
                \begin{array}{ll}
                  -\frac{p_1^{\prime \prime}(c\sqrt{2 \alpha}) \cosh \left(x\sqrt{2 \alpha}\right)}{2 \alpha  \cosh\left(c \sqrt{2 \alpha}\right)} + p_1(x-{m_{0-}}), & {m_{0-}} \leq x \leq {m_{0-}}+ c,\\
                  v({m_{0-}}+c)+(x-{m_{0-}}-c), & x \geq {m_{0-}}+c,\\
                 v({m_{0-}}-x), & x <{m_{0-}},
                \end{array}
              \right.
  \end{eqnarray}
where c is the solution to (\ref{constant_c}).
 
\end{theorem}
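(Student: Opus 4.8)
The plan is to exhibit an equilibrium of the \emph{stationary} type: guess a constant mean flow, solve the representative player's best response against it, and then check that the guess is self-consistent. First I would propose $m^*_t \equiv m_{0-}$ for all $t\ge 0$ (equivalently a flow $\{\mu^*_t\}$ of measures each with barycenter $m_{0-}$). With this exogenous mean, the representative player's payoff in (\ref{game}) becomes $\mathbb{E}\int_0^\infty e^{-\alpha t}\big[h(X_t-m_{0-})\,dt+d\check\xi_t\big]$ under $dX_t=dB_t+d\xi_t^+-d\xi_t^-$, $X_{0-}=x$. Setting $Y_t:=X_t-m_{0-}$ yields $dY_t=dB_t+d\xi_t^+-d\xi_t^-$, $Y_{0-}=x-m_{0-}$, and payoff $\mathbb{E}\int_0^\infty e^{-\alpha t}[h(Y_t)\,dt+d\check\xi_t]$, which is exactly the classical single-player problem (\ref{fuelproblem}) started at $x-m_{0-}$; moreover the admissible set $\mathcal{U}_\infty$ collapses to $\mathcal{U}$ because $m^*$ is deterministic, so the filtration $\mathcal{F}_t^{(X_{t-},m_{t-})}$ coincides with $\mathcal{F}_t^{X_{t-}}$. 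Invoking the classical solution (\cite{BSW1980}, \cite{Karatzas1982}; see (\ref{single-solution})), under Assumption {\bf A1} the optimal control is the bang-bang reflection keeping $Y_t\in[-c,c]$ with $c$ the unique positive root of (\ref{constant_c}); translated back to $X$ this is precisely (\ref{optimal-mfg-control}), with minimal cost $v(x-m_{0-})$, i.e.\ (\ref{stationary-solution}). Admissibility $(\xi^{*,+},\xi^{*,-})\in\mathcal{U}_\infty$ is inherited from the classical solution.

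It then remains to verify the fixed-point (consistency) condition: with every player applying $\xi^*$, the flow $\mu^*_t:=P_{X^*_t}$ must have mean $m_{0-}$. Writing $\Pi(y)=\mathrm{med}(m_{0-}-c,\,y,\,m_{0-}+c)$ for the projection onto $[m_{0-}-c,m_{0-}+c]$, the control $\xi^*$ sends $X_{0-}$ to $X^*_0=\Pi(X_{0-})$ and thereafter runs Brownian motion reflected in $[m_{0-}-c,m_{0-}+c]$. Since $\mu_{0-}$ is symmetric about $m_{0-}$ and $\Pi(2m_{0-}-y)=2m_{0-}-\Pi(y)$, the law of $X^*_0$ is again symmetric about $m_{0-}$; for $t>0$ I would use reflection invariance: the map $y\mapsto 2m_{0-}-y$ preserves $[m_{0-}-c,m_{0-}+c]$, sends $B$ to an equally distributed process, and commutes with the Skorokhod reflection, so $(2m_{0-}-X^*_t)_{t\ge0}\overset{d}{=}(X^*_t)_{t\ge0}$, whence $\mathbb{E}[X^*_t]=m_{0-}$ for all $t\ge 0$. (Equivalently, by symmetry the expected reflection local times at the two endpoints agree, so $\mathbb{E}[\xi^{*,+}_t]=\mathbb{E}[\xi^{*,-}_t]$ and $t\mapsto \mathbb{E}[X^*_t]$ is constant.) Combining the two steps, $(\xi^*,\{\mu^*_t\})$ satisfies both requirements in the definition of an NE to (\ref{game}), with NE value $v^*(x)=v(x-m_{0-})$, which is (\ref{stationary-solution}).

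The reduction in the first step is essentially bookkeeping, so I expect the main obstacle to be the consistency check: one must confirm that the symmetry of $\mu_{0-}$ about $m_{0-}$ survives both the (possibly discontinuous) optimal control at time $0$ and the subsequent reflected dynamics, the latter resting on the commutation of the Skorokhod reflection with $y\mapsto 2m_{0-}-y$. This self-consistency is precisely what distinguishes a genuine MFG equilibrium from a mere stationary-MFG candidate, a distinction the paper emphasizes in Remark \ref{remark4}.
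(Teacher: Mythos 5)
Your proposal is correct, and it reaches the theorem by a genuinely different route than the paper. The paper runs a three-step program: it first fixes an \emph{arbitrary} flow $\{\mu_t\}$ and solves the resulting time-dependent singular control problem by showing the value function is a convex, differentiable viscosity solution of the parabolic HJB equation (\ref{hjb_mfg}) (Propositions \ref{viscosity_differentiable} and \ref{optimal_control_fixed_mu}), then describes the controlled law via a Kolmogorov forward equation with time-varying reflecting boundaries $m_t\pm c_t$, and finally computes $dm_t'$ from that equation to exhibit $m_t\equiv m_{0-}$ as a solution of the fixed-point relation (\ref{fixed_point}) --- the last step also resting on the symmetry of $\mu_{0-}$ about $m_{0-}$. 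You instead guess the constant mean flow up front, observe that the best response then \emph{is} the classical fuel follower problem (\ref{fuelproblem}) after the shift $Y_t=X_t-m_{0-}$ (including the collapse of the filtration $\mathcal{F}_t^{(X_{t-},m_{t-})}$ to $\mathcal{F}_t^{X_{t-}}$), import its known $\mathcal{C}^2$ solution, and verify consistency by the coupling $(2m_{0-}-X_t^*)\overset{d}{=}(X_t^*)$, which follows since the projection and the two-sided Skorokhod map on $[m_{0-}-c,m_{0-}+c]$ commute with $y\mapsto 2m_{0-}-y$ and $-B\overset{d}{=}B$. For the existence statement actually claimed in Theorem \ref{MFG-SOL}, your argument is not only sufficient but arguably tighter: it avoids the unverified standing hypotheses in the paper's Step 1 (continuity of $\hat v$ and $\hat v_t$ in $t$) and the forward-equation computation entirely, and the mean of $X_t^*$ trivially exists because the reflected process is bounded. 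What the paper's heavier machinery buys is a framework for hunting time-dependent equilibria with non-constant $m_t$ (the issue raised in Remark \ref{remark4}), about which your guess-and-verify argument is silent --- but neither does the paper resolve that question.
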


The proof consists of  three steps.


\subsubsection*{Step 1: Stochastic control problem.}
Take the $M_1$ topology for the Skorokhod space $\mathcal{D}([0,\infty))$ with a  Wasserstein distance $W_1$(\cite{skorokhod1956, FH2017}). 
Fix a  mean field measure $\{\mu_t\}_{t \geq 0} \in \mathcal{P}_1(\mathcal{D}([0,\infty)))$, with $m_t=\int x \mu_t(dx)$ and $\mathcal{P}_1$  the class of all probability measures with finite moment of first order. Then (\ref{game}) becomes the following time-dependent and state-dependent singular control problem,
 \begin{eqnarray}\label{fixed_m_control}
 \begin{aligned}
\hat{v}(s,x) &= \inf_{{\xi} \in  \mathcal{U}_{\infty}} \mathbb{E}\int_s^{\infty} e^{-\alpha(t-s)}  \left[h(X_t - m_t)dt +d\xi_t^{+} + d\xi_t^{-}\right]\\
\mbox{ such that }& \hspace{20pt} d X_t = d B_t  +d\xi_t^{+} - d\xi_t^{-}, \,X_{s-}=x,\, m_{s-} = m. 
\end{aligned}
\end{eqnarray}

 The corresponding HJB equation for $\hat{v}(s,x)$ is
\begin{eqnarray}\label{hjb_mfg}
\begin{aligned}
&&\max \left\{ \alpha \hat{v} (s,x) - \hat{v} _t(s,x) - \frac{1}{2} \hat{v} _{xx}(s,x) - h(x-m) ,-1+\hat{v}_x(s,x),-1-\hat{v}_x(s,x) \right\} = 0.
\end{aligned}
\end{eqnarray}
Note that (\ref{hjb_mfg}) is a parabolic equation  because of $\mu_t$ despite the infinite horizon. This is different from the elliptic equation (\ref{originalHJB}). 

We will  show that $\hat{v}(s,x)$ in  (\ref{fixed_m_control}) is a viscosity solution to HJB equation (\ref{hjb_mfg}).  

First, under a fixed $\{\mu_t\}_{t \ge 0}$, the following dynamic programming principle holds.
\paragraph{Dynamic programming principle (DPP).} For all $(s,x)\in \mathbb{R}^+ \times \mathbb{R}$, 
\begin{eqnarray}\label{dpp}
\hat{v}(s,x) = \inf_{\xi \in \mathcal{U}_{\infty}} \mathbb{E} \left[ \int_s^{\theta} e^{-\alpha (t-s)} \left(h(X_t-m_t)dt+d\check{\xi}_t\right)+{e}^{-\alpha(\theta-s)}v(\theta,X_{\theta})\right]
\end{eqnarray}
for any $\theta \in \mathcal{T}$ and $\theta \ge s$, with $\mathcal{T}$ the set of all $\{\mathcal{F}^{{(X_t,m_t)}}\}_{t \ge 0}$-stopping times. Here, we adopt 
the convention that $e^{-\alpha \theta(\omega)}=0$ when $\theta(\omega)=\infty$. The proof of DPP (\ref{dpp}) follows  Guo and Pham \cite{GP2005} by extending the state space from $\mathbb{R}$ to $\mathbb{R}^+ \times \mathbb{R}$.

\begin{definition}[Viscosity solution]
$\hat{v}(t,x)$ is a \emph{continuous viscosity solution} to (\ref{hjb_mfg}) on $[0,\infty)\times \mathbb{R} $ if
\begin{itemize}
\item  Viscosity super-solution: for any $(t_0,x_0) \in [0,\infty)\times \mathbb{R}$ and for any function $\phi(t_0,x_0)$ such that $(t_0,x_0)$ is a local minimum of $(\hat{v}-\phi)(t,x)$ with $\hat{v}(t_0,x_0) = \phi(t_0,x_0)$,
\begin{eqnarray*}
\max \left\{  \alpha \phi (t_0,x_0)-\phi_t(t_0,x_0) - \frac{1}{2}\phi_{x,x} (t_0,x_0) - h(x_0-m), -1+\phi_x(t_0,x_0),-1-\phi_x(t_0,x_0) \right\} \geq 0.
\end{eqnarray*}
\item Viscosity sub-solution: for any $(t_0,x_0) \in [0,\infty)\times \mathbb{R}$ and for any function $\phi(t_0,x_0)$ such that $(t_0,x_0)$ is a local maximum of $(\hat{v}-\phi)(t,x)$ with $\hat{v}(t_0,x_0) = \phi(t_0,x_0)$,
\begin{eqnarray*}
\max \left\{ \alpha \phi (t_0,x_0)- \phi_t(t_0,x_0) - \frac{1}{2}\phi_{x,x} (t_0,x_0) - h(x_0-m) , -1+\phi_x(t_0,x_0),-1-\phi_x(t_0,x_0)\right\}  \leq 0.
\end{eqnarray*}
\end{itemize}
\end{definition}

\begin{proposition}\label{viscosity_differentiable}
Assume that the value function $\hat{v}(t,x)$ of  (\ref{fixed_m_control}) is continuous with respect to $t$. Then $\hat{v}(t,x)$  is a continuous viscosity solution of the HJB equation (\ref{hjb_mfg}) on $[s,\infty) \times \mathbb{R}$. Moreover, $\hat{v}(t,x)$ is convex and differentiable in $x$, and for any $x,y \in \mathbb{R}$, 
\begin{eqnarray}\label{property_vt}
\hat{v}(s,x) \leq \hat{v}(s,y) +|x-y| .
\end{eqnarray}
\end{proposition}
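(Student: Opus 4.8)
The plan is to prove the four assertions separately, deriving the Lipschitz bound, convexity, and the viscosity property directly from the definition and the DPP (\ref{dpp}), and reserving the real work for differentiability. For the inequality (\ref{property_vt}): fix $x,y$, take an $\epsilon$-optimal control for $\hat v(s,y)$ in $\mathcal U_\infty$, and prepend to it an immediate lump jump of size $|x-y|$ (into $\xi^+$ or $\xi^-$ according to the sign). Since the Brownian driver is shared, the two state trajectories coincide for all $t\ge s$, so the running costs $h(X_t-m_t)$ agree and the only extra cost is the (undiscounted, as it occurs at time $s$) jump. This gives $\hat v(s,x)\le |x-y|+\hat v(s,y)+\epsilon$; letting $\epsilon\downarrow0$ and exchanging $x$ and $y$ yields (\ref{property_vt}) and $1$-Lipschitz continuity of $x\mapsto\hat v(s,x)$. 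For convexity: given $x_0,x_1$, $\lambda\in[0,1]$ and $\epsilon$-optimal controls $\xi^{(0)},\xi^{(1)}$, the convex combination $\xi^{(\lambda)}=\lambda\xi^{(0)}+(1-\lambda)\xi^{(1)}$ is admissible and, by linearity of the dynamics in (\ref{fixed_m_control}), drives the state $\lambda X^{(0)}+(1-\lambda)X^{(1)}$, so $X^{(\lambda)}_t-m_t=\lambda(X^{(0)}_t-m_t)+(1-\lambda)(X^{(1)}_t-m_t)$; convexity of $h$ (Assumption \textbf{A1}) and linearity of the fuel term give $\hat v(s,\lambda x_0+(1-\lambda)x_1)\le\lambda\hat v(s,x_0)+(1-\lambda)\hat v(s,x_1)+\epsilon$.

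That $\hat v$ is a continuous viscosity solution of (\ref{hjb_mfg}) follows in the standard way from (\ref{dpp}). At a local minimum $(t_0,x_0)$ of $\hat v-\phi$, applying the DPP with the ``do nothing'' control on $[t_0,t_0+\delta]$ and It\^o's formula to $\phi$ gives, after dividing by $\delta$ and letting $\delta\downarrow0$, the inequality $\alpha\phi-\phi_t-\tfrac12\phi_{xx}-h(x_0-m)\ge0$, while testing with an immediate lump jump of size $\delta$ gives $-1\pm\phi_x\ge0$; hence the maximum in (\ref{hjb_mfg}) is $\ge0$. At a local maximum one argues by contradiction: if the maximum were $<0$, then all three terms are strictly negative, and combining the strict negativity of the PDE term with (\ref{dpp}) along the do-nothing strategy on a short interval contradicts the DPP. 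Continuity in $t$ is a hypothesis, so this holds on $[s,\infty)\times\mathbb R$.

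It remains to show that $\hat v(t,\cdot)$ is differentiable, which is the main obstacle. Convexity together with the $1$-Lipschitz bound gives one-sided derivatives $\hat v_x(t,x\pm)\in[-1,1]$ everywhere, nondecreasing in $x$, with $\hat v_x(t,x-)\le\hat v_x(t,x+)$ and differentiability off an at most countable set. Let $I(t):=\{x:\ |\hat v_x(t,x\pm)|<1\}$, an open interval $(\underline x(t),\overline x(t))$ by monotonicity. For $x\ge\overline x(t)$ one has $\hat v_x=1$, so $\hat v(t,\cdot)$ is affine there (hence smooth), and symmetrically for $x\le\underline x(t)$; on the open set $\bigcup_t\{t\}\times I(t)$ the viscosity solution solves the nondegenerate linear parabolic equation $\alpha\hat v-\hat v_t-\tfrac12\hat v_{xx}=h(x-m_t)$, and interior parabolic estimates upgrade it to a classical solution, in particular $C^1$ in $x$. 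Thus the only candidates for non-differentiability of $\hat v(t,\cdot)$ are the two free-boundary points $\underline x(t)$ and $\overline x(t)$.

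Ruling out a kink there is the principle of smooth fit, and I expect this to be the technical heart of the proposition. Suppose, say, $\hat v_x(t,\overline x(t)-)=c<1=\hat v_x(t,\overline x(t)+)$. At a point $\overline x(t)+\eta$, strictly inside the action region, the candidate optimal policy pushes left by $\eta$ at cost $\eta$; compare it with the policy that waits a short time $\delta$ before correcting. Brownian fluctuations of order $\sqrt{\delta}$ carry the state into the region where the marginal value is $c<1$, so the waiting policy saves a first-order amount of order $(1-c)\sqrt{\delta}$, which dominates the $O(\delta)$ of extra running and fuel cost, contradicting (\ref{dpp}). Hence $c=1$, and likewise at $\underline x(t)$ by the symmetry of $h$ and of $\mu$, so $\hat v(t,\cdot)\in C^1$. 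Making this local comparison rigorous --- tracking the moving boundary $\overline x(t)$, the evolving mean $m_t$, and the discounting over the short interval --- is where the care lies; the other three assertions are routine by comparison.
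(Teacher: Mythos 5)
Your Lipschitz bound and convexity arguments coincide with the paper's (jump immediately by $|x-y|$; linearity of the dynamics and of the fuel cost plus convexity of $h$), and these parts are fine. The rest of the proposal has two genuine problems.

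First, the viscosity-solution argument has the two halves attached to the wrong test points. The direct-testing argument (do nothing, or make a small lump jump, and pass to the limit) only works at a local \emph{maximum} of $\hat v-\phi$: there $\hat v\le\phi$ locally, so the DPP inequality $\hat v(t_0,x_0)\le \mathbb{E}[\cdots+e^{-\alpha\theta}\hat v(\theta,X_\theta)]\le \mathbb{E}[\cdots+e^{-\alpha\theta}\phi(\theta,X_\theta)]$ closes, and one obtains each of the three terms $\le 0$, i.e.\ the \emph{sub}-solution property. At a local \emph{minimum} one has $\hat v\ge\phi$ at the future point, so testing a single control gives nothing about $\mathbb{E}[\cdots+e^{-\alpha\theta}\phi]$; the super-solution property genuinely needs the infimum in the DPP and is proved by contradiction (assume all three terms $\le-\delta$ on a ball, and show every admissible control then beats the DPP by a uniform margin $\delta g_0>0$ --- this is the paper's Step B, and the delicate part is controlling the jump of $X$ at the exit time of the ball). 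You have assigned direct testing to the local minimum and the contradiction to the local maximum, and moreover your contradiction negates $\max\ge0$ rather than $\max\le0$; as written, neither half establishes the inequality it claims.

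Second, your differentiability argument is a different and much heavier route than the paper's, and its key step is not carried out. The paper does not identify the free boundary or invoke parabolic regularity at all: it supposes $\hat v_{x-}(t_0,x_0)<q<\hat v_{x+}(t_0,x_0)$, notes $|q|<1$ by \eqref{property_vt}, and tests the super-solution property with $\phi_\epsilon(t,x)=\hat v(t_0,x_0)+q(x-x_0)-\tfrac{1}{2\epsilon}(x-x_0)^2-\tfrac{1}{2\epsilon}(t-t_0)^2$, for which $(t_0,x_0)$ is a local minimum of $\hat v-\phi_\epsilon$ precisely because of the kink; the gradient constraints are slack, so the PDE term must be $\ge0$, which fails for small $\epsilon$. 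This is a few lines. Your route (linear parabolic equation plus interior estimates in the inaction region, then smooth fit at $\underline x(t),\overline x(t)$ via a wait-versus-push comparison saving $(1-c)\sqrt\delta$) could in principle be made to work, but you concede that the smooth-fit comparison is only a heuristic, and it additionally requires openness of the inaction set in $t$ and a viscosity-to-classical upgrade that you do not supply. As it stands the proposition's main conclusion --- differentiability in $x$ --- is not proved.
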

\begin{proof}
Since $h$ is convex and the pay-off function $\mathbb{E}\left[\int_s^{\infty} e^{-\alpha(t-s)}h(X_t - m_t)dt +d\xi_t^{+} + d\xi_t^{-}\right]$ in problem (\ref{fixed_m_control}) is linear in control $(\xi^{+},\xi^{-})$, the value function $\hat{v}(s,x)$ is convex in $x$. Since $\hat{v}(s,x)$ is finite and convex on $(-\infty,\infty)$, it is continuous in $x$. 
Moreover, consider a special control,
\begin{eqnarray}
\xi_t^{+}-\xi_t^{-}  =\left\{
                \begin{array}{ll}
                 0,\quad t=s,\\
               y-x,\quad t \geq s,
                \end{array}
              \right.
\end{eqnarray}
clearly $\hat{v}(s,x)\leq \hat{v}(s,y)+|y-x|$ .

We now prove that the value function is a viscosity solution of (\ref{hjb_mfg}).

\begin{itemize}
\item Step A: Viscosity sub-solution.\\
For some $(t_0,x_0) \in \mathbb{R}^+ \times  \mathbb{R}$ and $\phi \in  \mathcal{C}^{1,2}(\mathbb{R}^+ \times  \mathbb{R})$ such that $\hat{v}(t_0,x_0) = \phi(t_0,x_0)$ and $\phi(t_0,x_0) \geq \hat{v}(t_0,x_0)$ for $(t,x) \in B_{\epsilon}(t_0,x_0)$. That is, $\hat{v}-\phi$ has local maximum at $(t_0,x_0)$. Consider the following admissible control
\begin{eqnarray}
\xi_t^+  =\left\{
                \begin{array}{ll}
                 0,\quad t =t_0,\\
               \eta_1,\quad t \geq t_0,
                \end{array}
              \right.
\end{eqnarray}
\begin{eqnarray}
\xi_t^- =\left\{
                \begin{array}{ll}
                 0,\quad t=t_0,\\
               \eta_2,\quad t \geq t_0,
                \end{array}
              \right.
\end{eqnarray}
where $0 \leq \eta_1,\eta_2 \leq \epsilon$. Define the exit time 
\begin{eqnarray}
\tau_{\epsilon} = \inf \left\{t \geq t_0, X_t \notin \bar{B}_{\epsilon}(t_0,x_0)\right\}.
\end{eqnarray}
Notice that $X$ has at most one jump at $t = t_0$ and is continuous on $[t_0,t_0+\tau_{\epsilon})$. By the DPP,
\begin{eqnarray}\label{65}
\begin{aligned}
\phi(t_0,x_0) =\hat{v}(t_0,x_0) &\leq& \mathbb{E}  \int_{t_0}^{t_0+\tau_{\epsilon}\wedge \delta} e^{-\alpha(t-t_0)}\left[h(X_t-m_t)dt + d\xi_t^{+}+ d\xi_t^{-}\right] \\ 
&\enspace &+ \enspace \mathbb{E}\left[e^{-\alpha(\tau_{\epsilon}\wedge \delta)}\phi(t_0+\tau_{\epsilon}\wedge \delta,X_{t_0+\tau_{\epsilon}\wedge \delta})\right].
\end{aligned}
\end{eqnarray}
By It\^{o}'s lemma,
\begin{eqnarray}\label{66}
\begin{aligned}
 & \mathbb{E}[e^{-\alpha(\tau_{\epsilon}\wedge \delta)}\phi(t_0+\tau_{\epsilon}\wedge \delta,X_{t_0+\tau_{\epsilon}\wedge \delta})]\\
  = \phi(t_0,x_0) &+ \mathbb{E} \left[\int_{t_0}^{t_0+\tau_{\epsilon}\wedge \delta} e^{-\alpha(t-t_0)}(-\alpha \phi + \phi_t + \frac{1}{2} \phi_{x,x})(t,X_t)dt \right]\\
  \quad& + \enspace \mathbb{E} \left[\sum_{t_0 \leq t \leq \tau_{\epsilon}\wedge \delta} e^{-\alpha t }(\phi(t,X_t)-\phi(t,X_{t-})) \right].
\end{aligned}
\end{eqnarray}
Combining (\ref{65}) and (\ref{66}),
\begin{eqnarray}
\begin{aligned}
& \mathbb{E} \left[\int_{t_0}^{t_0+\tau_{\epsilon}\wedge \delta} e^{-\alpha(t-t_0)}(\alpha \phi - \phi_t -\frac{1}{2}\phi_{x,x}-h)(t,X_t)dt \right] \\
-&  \mathbb{E} \left[\int_{t_0}^{t_0+\tau_{\epsilon}\wedge \delta} e^{-\alpha(t-t_0)}(d\xi_t^{+}+d\xi_t^{-}) \right]\\
-&  \mathbb{E}\left[\sum_{t_0 \leq t \leq \tau_{\epsilon}\wedge \delta} e^{-\alpha t }(\phi(t,X_t)-\phi(t,X_{t-})) \right] \leq 0.
\end{aligned}
\end{eqnarray}
Now,  setting $\eta_1=\eta_2=0$ and letting $\delta \rightarrow 0$ leads to $\alpha \phi -\phi_t -\frac{1}{2}\phi_{x,x}-h \leq 0$.

Next, let $\eta_2=0$, and note that $\xi^+_t$ and $X_t$ only jump at time $t_0$ with a size $\eta_1$, therefore
\begin{eqnarray*}
 \mathbb{E} \left[\int_{t_0}^{t_0+\tau_{\epsilon}\wedge \delta} e^{-\alpha(t-t_0)}(\alpha \phi -\phi_t -\frac{1}{2}\phi_{x,x}-h)(t,X_t)dt \right] - \eta_1- \phi(t_0,x_0+\eta_1) + \phi(t_0,x_0)\leq 0.
\end{eqnarray*}
Now, taking $\delta \rightarrow 0$, dividing by $\eta_1$, and letting $\eta_1 \rightarrow 0$ yields $-1-\phi_x \leq 0$.
Similarly,  $-1+ \phi_x \leq 0$. That is, $\phi$ is the sub-solution to (\ref{hjb_mfg}), so that 
\begin{eqnarray*}
\max \left\{\alpha \phi  (t_0,x_0)- \phi_t  (t_0,x_0)-\frac{1}{2}\phi_{x,x} (t_0,x_0)-h(x_0-m),-1-\phi_x (t_0,x_0),-1+ \phi_x (t_0,x_0) \right\} \leq 0.
\end{eqnarray*}

\item Step B: Viscosity Super-solution.\\
This is established by a contradiction argument. Suppose otherwise, then there exists $(t_0,x_0)$, $\epsilon,\delta>0$ $\phi \in C^{1,2}(\mathbb{R}^+ \times \mathbb{R})$ such that for any $(t,x) \in \bar{B}_{\epsilon}(t_0,x_0)$,
\begin{eqnarray}
\begin{cases}
\alpha \phi - \frac{1}{2}\phi_{x,x} - h(x-m) -\phi_t\leq -\delta,\\
-1+\delta \leq  \phi_x \leq 1-\delta.
\end{cases}
\end{eqnarray}
Given any admissible control $(\xi^+,\xi^-)\in  \mathcal{U_{\infty}}$, consider an exit time $\tau_{\epsilon} = \inf \{t \geq 0, X_{t+t_0} \notin \bar{B}_{\epsilon}(t_0,x_0)\}$, and apply It\^{o}'s lemma to $e^{-\alpha t}\phi(t,X_t)$, 
\begin{eqnarray*}
\begin{aligned}
 \mathbb{E} \left[ e^{-\alpha \tau_{\epsilon} }\phi(t_0+\tau_{\epsilon},X_{t_0+\tau_{\epsilon}}) \right] &= \phi(t_0,x_0)+ \mathbb{E} \left[\int_{t_0}^{t_0+\tau_{\epsilon}} e^{-\alpha(t-t_0)}(-\alpha \phi +\phi_t +\frac{1}{2}\phi_{x,x})(t,X_t)dt \right]\\
  &\enspace\enspace+ \enspace \mathbb{E} \left[\sum_{t_0 \leq t \leq \tau_{\epsilon}} e^{-\alpha t }(\phi(t,X_t)-\phi(t,X_{t-})) \right]\\
   &\enspace\enspace+ \enspace \mathbb{E} \left[
   \int_{t_0}^{t_0+\tau_{\epsilon}} e^{-\alpha t} \phi^{\prime}(t,X_t) \left(\left(d \xi_t^{+}\right)^c+\left(d \xi_t^{-}\right)^c\right)
   \right].
\end{aligned}
\end{eqnarray*}
Notice that for any $t_0 \leq t \leq t_0 + \tau_{\epsilon}$, $(t,X_t) \in \bar{B}_{\epsilon}(t_0,x_0)$. By the Taylor expansion and $\Delta X_t = \Delta \xi^+_t-\Delta \xi^-_t$, clearly for any $0 \leq t < \tau_{\epsilon}$:
\begin{eqnarray}\label{611}
\phi(t,X_t) - \phi(t,X_{t-}) &=& \Delta X_t \int _0^1 \phi_x(t,X_t+z \Delta X_t)dz \nonumber\\
&\geq&  (-1+\delta) (\Delta \xi_t^+ + \Delta \xi_t^-).
\end{eqnarray}
Thus, 
\begin{eqnarray}\label{612}
\begin{aligned}
&\mathbb{E}[e^{-\alpha \tau_{\epsilon} }\phi(t_0+\tau_{\epsilon},X_{t_0+\tau_{\epsilon}-})] \\
 \geq 
\phi(t_0,x_0) &+ \mathbb{E}\left[\int_{t_0}^{t_0+\tau_{\epsilon}}e^{-\alpha (t-t_0)}(-h + \delta)(t,X_t)dt \right]\\
\quad &+(\delta-1) \mathbb{E} \left[\int_{t_0}^{t_0+\tau_{\epsilon}-}e^{-\alpha (t-t_0)} (d\xi_t^+ +d\xi_t^- ) \right]\\
= \phi(t_0,x_0) &+ \mathbb{E} \left[\int_{t_0}^{t_0+\tau_{\epsilon}}e^{-\alpha (t-t_0)}\left(-h(X_t-m_t)dt-d\xi_t^+-d\xi_t^- \right)\right]\\
\quad &+\mathbb{E} \left[e^{-\alpha \tau_{\epsilon}}(\Delta \xi_{t_0+\tau_{\epsilon}}^++\Delta \xi_{t_0+\tau_{\epsilon}}^-)] +\delta \mathbb{E}[\int_{t_0}^{t_0+\tau_{\epsilon}}e^{-\alpha t}dt \right] \\
\quad &+ \delta \mathbb{E} \left[\int_{t_0}^{t_0+\tau_{\epsilon}-}e^{-\alpha (t-t_0)} (d\xi_t^+ +d\xi_t^-) \right].
\end{aligned}
\end{eqnarray}
By definition of $\tau_{\epsilon}$, $(t_0+\tau_{\epsilon}-,X_{t_0+\tau_{\epsilon}-}) \in \bar{B}_{\epsilon}(t_0,x_0)$ and $(t_0+\tau_{\epsilon},X_{t_0+\tau_{\epsilon}})$ is either on the boundary $\partial {B}_{\epsilon}(t_0,x_0)$ or out of $\bar{B}_{\epsilon}(t_0,x_0)$. However, there exists some random variable $\alpha \in [0,1]$ such that,
\begin{eqnarray*}
x_{\alpha} &=& X_{t_0+\tau_{\epsilon}-}+ \alpha \Delta X_{t_0+\tau_{\epsilon}}\\
 &=& X_{t_0+\tau_{\epsilon}-}+ \alpha (\Delta \xi^+_{t_0+\tau_{\epsilon}}-\Delta \xi^-_{t_0+\tau_{\epsilon}}) \in \partial B_{\epsilon}(t_0,x_0).
\end{eqnarray*}
Similar as in (\ref{611}), we have
\begin{eqnarray}\label{613}
\begin{aligned}
\phi(t_0+\tau_{\epsilon},x_{\alpha}) - \phi(t_0+\tau_{\epsilon},X_{t_0+\tau_{\epsilon}-}) 
\geq  \alpha (-1+\delta) (\Delta \xi_{t_0+\tau_{\epsilon}}^+ + \Delta \xi_{t_0+\tau_{\epsilon}}^-).
\end{aligned}
\end{eqnarray}
Notice that $X_{t_0+\tau_{\epsilon}} = x_{\alpha}+(1-\alpha)( \Delta \xi_{t_0+\tau_{\epsilon}}^+ - \Delta \xi_{t_0+\tau_{\epsilon}}^-)$, and from (\ref{property_vt}), 
\begin{eqnarray}\label{614}
\hat{v}(t_0+\tau_{\epsilon},x_{\alpha}) \leq \hat{v}(t_0+\tau_{\epsilon},X_{t_0+\tau_{\epsilon}}) + (1-\alpha)(\Delta \xi_{t_0+\tau_{\epsilon}}^+ + \Delta \xi_{t_0+\tau_{\epsilon}}^-).
\end{eqnarray}
Recalling $\phi(t_0+\tau_{\epsilon},x_{\alpha}) \leq \hat{v}(t_0+\tau_{\epsilon},x_{\alpha})$, inequalities (\ref{613}) and (\ref{614}) imply 
\begin{eqnarray*}
   \phi(t_0+\tau_{\epsilon},X_{t_0+\tau_{\epsilon}-}) \leq \hat{v}(t_0+\tau_{\epsilon},X_{t_0+\tau_{\epsilon}}) + (1-\alpha \delta)(\Delta \xi_{t_0+\tau_{\epsilon}}^+ + \Delta \xi_{t_0+\tau_{\epsilon}}^-) .
\end{eqnarray*}
Plugging the above inequality into (\ref{612}), by $\phi(t_0,x_0) = \hat{v}(t_0,x_0)$, 
\begin{eqnarray}\label{615}
\begin{aligned}
\mathbb{E}e^{-\alpha \tau_{\epsilon} }& \left[\int_{t_0}^{t_0+\tau_{\epsilon}}\left(h(X_t-m_t)dt +d\xi_t^++d\xi_t^-\right)  + \hat{v}(t_0+\tau_{\epsilon},X_{t_0+\tau_{\epsilon}}) \right] \\
\geq \hat{v}(t_0,x_0) &+  \alpha \delta \mathbb{E} \left[ e^{-\alpha \tau_{\epsilon}}(\Delta \xi_{t_0+\tau_{\epsilon}}^++\Delta \xi_{t_0+\tau_{\epsilon}}^-) \right] \\
&+\delta \mathbb{E} \left[\int_{t_0}^{t_0+\tau_{\epsilon}}e^{-\alpha t}dt \right] + \delta \mathbb{E} \left[\int_{t_0}^{t_0+\tau_{\epsilon}-}e^{-\alpha (t-t_0)} (d\xi_t^+ +d\xi_t^-) \right].
\end{aligned}
\end{eqnarray}
There exists a constant $g_0>0$ such that for any $(\xi^+,\xi^-)\in  \mathcal{U}_{\infty}$,
\begin{eqnarray*}
\alpha \mathbb{E} \left[e^{-\alpha \tau_{\epsilon}}(\Delta \xi_{t_0+\tau_{\epsilon}}^+ +\Delta \xi_{t_0+\tau_{\epsilon}}^-)\right] + \mathbb{E} \left[\int_{t_0}^{t_0+\tau_{\epsilon}}e^{-\alpha t}dt \right] 
+  \mathbb{E} \left[\int_{t_0}^{t_0+\tau_{\epsilon}-}e^{-\alpha (t-t_0)} (d\xi_t^+ +d\xi_t^-)\right] \geq g_0.
\end{eqnarray*}
Finally,  taking the  infimum over all admissible controls $(\xi^+,\xi^-) \in  \mathcal{U}_{\infty}$ in (\ref{615}) suggests
\begin{eqnarray}
\hat{v}(t_0,x_0) \geq \hat{v}(t_0,x_0) + \delta g_0,
\end{eqnarray}
which is a contradiction.
\end{itemize}

The differentiability with respect to $x$ can be proved using the convexity  of the value function $\hat{v}(s,x)$ to (\ref{hjb_mfg}). Since $\hat{v}(s,x)$ is convex, the left and right derivatives with respect to $x$, $\hat{v}_{x-}(t,x)$ and $\hat{v}_{x+}(t,x)$ exist for any $t \geq s$ and $x \in \mathbb{R}$. Also, $\hat{v}_{x-}(t,x) \leq \hat{v}_{x+}(t,x)$ by convexity. We argue by contradiction and suppose there exists $x_0 \in \mathbb{R}$ and $t_0 \geq 0$ such that $\hat{v}_{x-}(t_0,x_0) < \hat{v}_{x+}(t_0,x_0)$. Fix some $q$ in $(\hat{v}_{x-}(t_0,x_0) , \hat{v}_{x+}(t_0,x_0))$ and consider the test function
  \begin{eqnarray*}
  \phi_{\epsilon}(t,x) = \hat{v}(t_0,x_0) +q(x-x_0) -\frac{1}{2\epsilon}(x-x_0)^2 -\frac{1}{2\epsilon}(t-t_0)^2,
  \end{eqnarray*}
  with $\epsilon>0$. Then $(t_0,x_0)$ is a local minimum of $(\hat{v}-\phi_{\epsilon})(t,x)$ since $\hat{v}_{x-}(t_0,x_0) < q=\phi_x(t_0,x_0)< \hat{v}_{x+}(t_0,x_0) $ and $\phi_t (t_0,x_0)=0$. Hence $\phi$ is a viscosity super-solution by definition. That is,
  \begin{eqnarray*}
\max \left\{\alpha \phi - \phi_t -\frac{1}{2}\phi_{x,x}-h(x_0-m),-1-\phi_x,-1+ \phi_x \right\} \geq 0,
\end{eqnarray*}
  which leads to $ -\frac{1}{2 \epsilon}+h(x_0-m)-\alpha \phi(t_0,x_0) \geq 0$. Taking $\epsilon>0$ sufficiently small  leads to a contradiction.
\end{proof}

\begin{proposition}[Optimal Control] \label{optimal_control_fixed_mu}
Assume \textbf{A1} and assume that $\hat{v}_t(t,x)$ is continuous with respect to $t$, the optimal control  to (\ref{fixed_m_control}) under a fixed $\{\mu_t\}_{t \geq 0}\in \mathcal{P}_1(\mathcal{D}([0,\infty)))$ is of the form
\begin{eqnarray}\label{optimal-control-fixed-mu_t}
 d \hat{\xi}_t=\left\{
                \begin{array}{ll}
                  m_t+c_t-x, \quad & \hat{v}_x(t,x) = 1,\\
                  0, & \vert \hat{v}_x(t,x) \vert< 1,\\
                  m_t-c_t-x, &  \hat{v}_x(t,x) = -1,
                \end{array}
              \right.
\end{eqnarray}
where $t \geq 0$, $m_t = \int x \mu_t (dx)$, and $c_t=\inf\{x \given \hat{v}_x(t,x) = 1\} -m_t =-\sup\{x \given  \hat{v}_x(t,x) = -1\} + m_t$. \end{proposition}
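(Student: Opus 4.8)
The plan is to read off the candidate optimal control from the geometry of the viscosity solution $\hat v$ of (\ref{hjb_mfg}), and then close the argument with a verification computation.

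\emph{Step 1: the three regions.} By Proposition~\ref{viscosity_differentiable}, for each fixed $t$ the map $x\mapsto\hat v(t,x)$ is convex and $C^1$, and the gradient constraints in (\ref{hjb_mfg}) force $|\hat v_x(t,x)|\le1$. Hence $x\mapsto\hat v_x(t,x)$ is continuous and non-decreasing with range in $[-1,1]$, so the state line splits into three sets,
\[
\{\hat v_x(t,\cdot)=-1\}=(-\infty,a_t],\qquad \{|\hat v_x(t,\cdot)|<1\}=(a_t,b_t),\qquad \{\hat v_x(t,\cdot)=1\}=[b_t,\infty),
\]
the middle interval being nonempty because near the minimizer $m_t$ of $h(\cdot-m_t)$ doing nothing is strictly better than pushing (so $\hat v_x$ vanishes there), and bounded because $h$ is coercive. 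The symmetry of $h$, together with the standing symmetry of $\mu_t$ about $m_t$ (which keeps $m_t\equiv m_{0-}$ and makes the control problem for $X_t-m_t$ invariant under $y\mapsto-y$), gives $\hat v(t,m_t+y)=\hat v(t,m_t-y)$; thus $\hat v_x(t,\cdot)$ is odd about $m_t$, $a_t=m_t-c_t$ and $b_t=m_t+c_t$ with $c_t:=b_t-m_t\ge0$, characterised by $\hat v_x(t,m_t+c_t)=1$ and $\hat v_x(t,m_t-c_t)=-1$, which is the $c_t$ in the statement. On the waiting region the only term in (\ref{hjb_mfg}) that can vanish is the first, so $\hat v$ solves $\alpha\hat v-\tfrac12\hat v_{xx}-\hat v_t=h(\cdot-m_t)$ there; by interior parabolic (Schauder) regularity, using the assumed continuity of $\hat v_t$ and $h\in C^2$, one gets $\hat v\in C^{1,2}$ on $\{(t,x):a_t<x<b_t\}$, where then $\hat v_{xx}$ is continuous and $\ge0$.

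\emph{Step 2: the candidate control.} The control $\hat\xi=(\hat\xi^+,\hat\xi^-)$ in (\ref{optimal-control-fixed-mu_t}) is precisely the one that, when $x\notin[a_{0-},b_{0-}]$, instantaneously pushes the state onto the nearer endpoint, and thereafter keeps it in $[a_t,b_t]=[m_t-c_t,m_t+c_t]$ with the minimal push; that is, the controlled state $\hat X$ solves the one-dimensional, time-dependent Skorokhod problem on the moving interval $[a_t,b_t]$, with $\hat\xi^+,\hat\xi^-$ the boundary local times. Existence and uniqueness of $\hat X$, and the admissibility bound $\mathbb E\int_0^\infty e^{-\alpha t}(d\hat\xi^+_t+d\hat\xi^-_t)<\infty$, follow from standard one-dimensional reflection estimates since $[a_t,b_t]$ is bounded, and $\hat\xi$ is a deterministic functional of $(X_{t-},m_{t-})$, hence $\mathcal F^{(X_{t-},m_{t-})}$-measurable and admissible.

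\emph{Step 3: verification.} For an arbitrary admissible $\xi$ apply the It\^o--Tanaka--Meyer formula to $e^{-\alpha(t-s)}\hat v(t,X_t)$ (legitimate because $\hat v$ is $C^1$ in $x$, convex in $x$ so $\hat v_{xx}$ is a non-negative measure, and $\hat v_t$ is continuous). On $\{|\hat v_x|<1\}$ the PDE gives $-\alpha\hat v+\hat v_t+\tfrac12\hat v_{xx}=-h$; on $\{|\hat v_x|=1\}$ one has $\hat v_{xx}\ge0$ as a measure and the HJB inequality $\alpha\hat v-\hat v_t-\tfrac12\hat v_{xx}-h\le0$; using in addition $|\hat v_x|\le1$ for the $\xi$-driven increments, so that $\hat v_x((d\xi^+)^c-(d\xi^-)^c)\ge-(d\xi^+)^c-(d\xi^-)^c$ and $\hat v(t,X_t)-\hat v(t,X_{t-})\ge-(\Delta\xi^+_t+\Delta\xi^-_t)$, and taking expectations (the $dB$ term is a true martingale because $\hat v_x$ is bounded), one obtains
\[
\hat v(s,x)\le\mathbb E\int_s^T e^{-\alpha(t-s)}\big(h(X_t-m_t)\,dt+d\check\xi_t\big)+\mathbb E\big[e^{-\alpha(T-s)}\hat v(T,X_T)\big].
\]
Since $\hat v$ grows at most linearly in $x$ and the integrability built into admissibility forces $e^{-\alpha t}\mathbb E[\xi^\pm_t]\to0$, the last term vanishes as $T\to\infty$, giving $\hat v(s,x)\le J_{(\infty)}(x;\xi)$. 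Running the same computation with $\xi=\hat\xi$ turns every inequality into an equality: $\hat X_t\in[a_t,b_t]$ at all times, so the PDE holds with equality along $\hat X$; $\hat\xi^+$ (resp.\ $\hat\xi^-$) increases only when $\hat X_t=b_t$ where $\hat v_x=1$ (resp.\ $\hat X_t=a_t$ where $\hat v_x=-1$), so the reflection term equals $d\hat\xi^+_t+d\hat\xi^-_t$; and the initial jump, if any, moves the state across a segment on which $|\hat v_x|\equiv1$, so $\hat v(0,x)=\hat v(0,\hat X_0)+(\Delta\hat\xi^+_0+\Delta\hat\xi^-_0)$. Hence $\hat v(s,x)=J_{(\infty)}(x;\hat\xi)$, and with the lower bound this shows $\hat\xi$ is optimal and has the claimed form; optimality of any other minimizer must reflect at $a_t,b_t$ for the same reasons.

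\emph{Main obstacle.} The crux is the regularity input in Step~1: upgrading the a priori information on $\hat v$ (viscosity solution, convex and $C^1$ in $x$, $\hat v_t$ continuous) to $\hat v\in C^{1,2}$ in the interior of the waiting region, and enough joint regularity of $\hat v_x$ near the free boundary that $t\mapsto c_t$ is well-behaved — this is exactly what makes the It\^o expansion and the equality case in Step~3 fully rigorous. A secondary technical point is the well-posedness and local-time integrability of the time-dependent reflected process $\hat X$ in Step~2; the symmetric parametrisation by $c_t$ is comparatively routine once the symmetry of the problem is invoked.
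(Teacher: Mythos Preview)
Your approach is essentially the same as the paper's: identify the three regions via convexity and differentiability from Proposition~\ref{viscosity_differentiable}, use symmetry to collapse the two thresholds into a single $c_t$, and then run a verification argument via the generalized It\^o formula. The paper's proof is much terser---it simply observes that $c_t^1=c_t^2$ by symmetry and then cites the verification argument in Karatzas~\cite{Karatzas1982} rather than writing it out---so your Step~3 is a detailed expansion of what the paper leaves implicit.

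Two small remarks. First, in Step~1 you invoke ``the standing symmetry of $\mu_t$ about $m_t$'' to get $\hat v(t,m_t+y)=\hat v(t,m_t-y)$. This is unnecessary and slightly misplaced: the control problem~(\ref{fixed_m_control}) depends on the fixed flow only through $m_t=\int x\,\mu_t(dx)$, and the symmetry of $\hat v$ about $m_t$ follows directly from $h$ being even and the dynamics being reflection-invariant; no assumption on the shape of $\mu_t$ is needed at this stage. The paper's phrasing (``symmetry of Problem~(\ref{game}) under a fixed $\{m_t\}$'') makes this clearer. Second, your ``main obstacle''---upgrading to $C^{1,2}$ on the waiting region---is somewhat overstated: as you yourself note, It\^o--Tanaka--Meyer for convex functions is already enough to make the verification rigorous, so the interior Schauder step, while true, is not strictly required. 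The paper sidesteps the issue entirely by citing \cite{Karatzas1982}.
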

\begin{proof}
By Proposition \ref{viscosity_differentiable}, $\hat{v}(t,x)$ is convex and differentiable in $x$, hence for any fixed $t \in [0, \infty)$, $c^1_t:=\inf\{x \given \hat{v}_x(t,x) = 1\} -m_t$ and $c^2_t:=-\sup\{x \given \hat{v}_x(t,x) = -1\} + m_t$ exist. By the symmetry of Problem (\ref{game}) under a fixed $\{m_t\}_{t \geq 0}$, $\hat{v}(t,m_t+\delta) = \hat{v}(t,m_t-\delta)$ and $\hat{v}_x(t,m_t+\delta) = -\hat{v}_x(t,m_t-\delta)$ for any fixed $t$ and any $\delta >0$, hence  $c_t^1 = c_t^2$,  denoted as $c_t$.

Because $\hat{v}(t,x)$ is convex in $x$ and continuously differentiable in $x$ and $t$,  one can apply the generalized It\^{o}'s formula to $\hat{v}(t,x)$ with (\ref{optimal-control-fixed-mu_t}) and use a similar argument as the verification theorem in \cite{Karatzas1982} to obtain the optimality of (\ref{optimal-control-fixed-mu_t}).
\end{proof}

Given the optimal control (\ref{optimal-control-fixed-mu_t}), define a mapping  $\Gamma_1: \mathcal{P}_1(\mathcal{D}([0,\infty)))\rightarrow\mathcal{D}([0,\infty))$ such that 
$$\Gamma_1 \left(\{\mu_t\}_{t \geq 0} \right) = \{ \hat{\xi}\ \vert \ \{\mu_t\}_{t \geq 0}\}_{t \geq 0}.$$

\subsubsection*{Step 2: Consistency.}
Given Proposition \ref{optimal_control_fixed_mu} and a fixed flow $\{\mu_t\}_{t\geq 0}$, the optimal control $(\hat{\xi}_t^{+},\hat{\xi}_t^{-})$ to  (\ref{hjb_mfg}) is a bang-bang type and the controlled process $\hat{{X}}_t$ is a reflected Brownian motion with two time-dependent reflected boundaries $m_t+c_t$ and $m_t-c_t$. 
$m_t+c_t,m_t-c_t\in \mathcal{C}([0,\infty])$ since $\hat{v}(t,x)$ is continuous and differentiable. By Theorem 2.6 in Burdzy, Kang, and Ramanan~\cite{BKR2009}, there exists a unique solution, $\hat{{X}}_t$, to the SP with time varying domain $ \{(t,x) \given m_t-c_t \leq x\leq c_t+m_t\}$ such that $\hat{{X}}_t$ is a c\`ad\`ag process. 
Furthermore, by Theorem {2.9} in Burdy, Chen, and Sylvester~\cite{BCS2004}, the Kolmogorov forward equation for $\hat{\mu}_t$ can be described as

\begin{eqnarray}
\left\{
\begin{array}{ll}
p_t(t,x) - \frac{1}{2} p_{x,x}(t,x)=0, &\quad \mbox{ when }  \vert x-m_t \vert < c_t, \label{kol1}\\
p_x(t,x)+2 (\frac{\partial{{m}_t}}{\partial{t}}+\frac{\partial{{c}_t}}{\partial{t}})p(t,x) = 0, &\quad \mbox{ when } x=m_t+c_t,\\
p_x(t,x)-2 (\frac{\partial{{m}_t}}{\partial{t}}-\frac{\partial{{c}_t}}{\partial{t}})p(t,x) = 0, &\quad \mbox{ when } x=m_t-c_t, 
  \end{array}
   \right.
\end{eqnarray}
with the initial distribution $p(0,x)=\hat{\mu}_{0} \in \mathcal{P}_1(\mathbb{R})$, where 
\begin{eqnarray}
\hat{\mu}_0(x)=\left\{
\begin{array}{ll}
0, \,\,\,\,\,\,\,\,& x<m_{0-}-c_0 \mbox{ or } x>m_{0-}+c_0,\\
{\mu}_{0-}(x) , \,\,\,\, \,\,\,\, & \vert x-m_{0-}\vert <c_0,\\
{\mu}_{0-}(x)+ \int_{-\infty}^{m_{0-}-c_0-}{\mu}_{0-}(dx), \,\,\,\, \,\,\,\, &x=m_{0-}-c_0,\\
{\mu}_{0-}(x)+ \int_{m_{0-}+c_0+}^{\infty}{\mu}_{0-}(dx), \,\,\,\, \,\,\,\, &x=m_{0-}+c_0.
\end{array}
\right.
\end{eqnarray}
 By Theorem 2.9 in \cite{BCS2004}, given $m_t+c_t,m_t-c_t \in \mathcal{C}([0,\infty))$, the Kolmogorov forward equation (\ref{kol1}) with the initial distribution {$p(0,x):=\hat{\mu_0}(x)$} has a solution.

\paragraph{Step 3: Fixed point analysis.}
Denote $\hat{\mu}_t$ as the distribution of $\hat{X}_t$, obviously $\hat{\mu}_t \in \mathcal{P}_1(\mathcal{D}([0,\infty))$. Consequently, define $\Gamma_2:\mathcal{D}([0,\infty)) \rightarrow \mathcal{P}_1(\mathcal{D}([0,\infty))$ such that
\[
\Gamma_2\left(\hat{\xi}(t,x|\{\mu_t\}_{t\ge 0}) \right) = \{\hat{\mu}_t\}_{t\ge 0}.
\]


Now, define a mapping $\Gamma: \mathcal{P}_1(\mathcal{D}([0,\infty)) \rightarrow \mathcal{P}_1(\mathcal{D}([0,\infty))$ such that $$
\Gamma(\{\mu_t\}_{t \geq 0}) = \Gamma_2 \circ\Gamma_1(\{\mu_t\}_{t\ge 0}) = \{\hat{\mu}\}_{t\geq 0}.
$$
One can then update $m^{\prime}_t$, and have 
\begin{eqnarray}
d m^{\prime}_t &=& d \left( \int x p(t,dx) \right) \label{k1}\\
&=& \left[ \frac{1}{2}\int x p_{x,x}(t,dx)\right] dt \label{k2}\\
&=&\frac{1}{2} [ x p_x(t,x)\vert_{x=m_t+c_t}- x p_x(t,x)\vert_{x=m_t-c_t}-p(t,x)\vert_{x=m_t+c_t}+p(t,x)\vert_{x=m_t-c_t}]dt\label{k3}\\
&=& \frac{1}{2} \left.\left[\left( -2 \left(\frac{d{m}_t}{d t}+\frac{d{c}_t}{dt}\right)x-1 \right)p(t,x) \right\vert_{x=m_t+c_t}\right.\nonumber\\ 
&&\left.\left.-\left(2\left(\frac{d{m}_t}{dt}-\frac{d{c}_t}{dt}\right)x-1\right)p(t,x) \right\vert_{x=m_t-c_t}\right] dt\label{k4}
\end{eqnarray}
(\ref{k2}) comes from  (\ref{kol1}), (\ref{k3}) is from integration by part, and (\ref{k4}) follows from the boundary conditions. Since {$\mu_{0-}$} is symmetric around $m_{0-}$ and the optimal control (\ref{optimal-control-fixed-mu_t}) is an odd function around $m_t$ for any $t \geq 0$, the distribution $p(t,x)$ is symmetric around $m_{0-}$ for any $t \geq 0$.

\begin{eqnarray}\label{fixed_point}
(\ref{k4})= -2\left(\frac{d{m}_t}{dt} m_t+\frac{d{c}_t}{dt} c_t \right)p(t,m_t+c_t) dt.\label{k5}
\end{eqnarray}
Clearly $m_t= m_{0-}$ is one solution to the fixed point equation (\ref{fixed_point}).
This fixed point to $\Gamma$  is an NE to the MFG (\ref{game}) and the associated NE value is smooth in both $x, t$.

{
\begin{remark} \label{remark4} Note that solution  $m_t \ (=m_{0-})$ is time independent and distribution independent. 
Consequently $v(t,x)$ is time independent and  $\frac{d c_t}{dt}=0$. In fact, this time independent property of the value function $v(t,x)$ reduces the HJB equation (\ref{hjb_mfg}) from a parabolic form to an elliptic one.
 However, there might be time-dependent NE solution(s) with non-constant mean position $\{m_t\}_{t \ge 0}$ for 
 Eqn. (\ref{fixed_point}). We are unable  to verify the existence/nonexistence of such solutions. 
 
 On a related note,  if instead a stationary MFG (SMFG) is specified by replacing $h(X_t-m_t)$ with
$h(X_t-\lim_{t \rightarrow \infty}m_t)$, the associated  HJB equation (\ref{hjb_mfg}) will also be elliptic.
(See Appendix D for more precise definition of the SMFG formulation.)
 In this case, one can use the same approach to derive  infinitely many NEs of the bang-bang type, with the controlled dynamics reflected 
 at $m-c$ and $m+c$ for any constant $m$. Note however, the NE for the SMFG when $m\ne m_{0-}$ is not an NE for the 
MFG (\ref{game}). 
\end{remark}}


\section{Relation between the $N$-player game and the MFG}

\subsection{Convergence of Game Values}
First,  from Theorem \ref{NE_point_N}, one can see, with the detailed proof given  in Appendix C, 
\begin{proposition} \label{constant_convergence}
Given $c_N$ the unique solution to (\ref{c2}) and $c>0$  the unique solution to (\ref{constant_c}),
$$
\lim_{N \rightarrow \infty}c_N = c.$$
When $h(x)=x^2$,  $c_N$ is a decreasing function of $N$.
 \end{proposition}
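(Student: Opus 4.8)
The plan is to read \eqref{c2} as a small perturbation of \eqref{constant_c}: as $N\to\infty$ the weight $\theta_N:=\tfrac{N-1}{N}\uparrow 1$, the diffusion argument $\tfrac{N-1}{N}\bigl(x+\sqrt{N/(N-1)}\,B_t\bigr)=\theta_N x+\sqrt{\theta_N}B_t$ tends to $x+B_t$, and $\beta_N:=\sqrt{\tfrac{2(N-1)\alpha}{N}}=\sqrt{2\alpha\theta_N}\uparrow\beta:=\sqrt{2\alpha}$. So I would first prove a $C^2$-convergence statement for the ``doing-nothing'' costs: for $j=0,1,2$, $p_N^{(j)}\to p_1^{(j)}$ uniformly on compact subsets of $\mathbb{R}$. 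This is dominated convergence using Assumption \textbf{A1}: the bounds $0<k\le h''\le K$ give envelopes $|h(y)|\le h(0)+\tfrac{K}{2}y^2$ and $|h'(y)|\le K|y|$ (the latter by oddness of $h'$) which, after the factor $e^{-\alpha t}$, are $t$-integrable and locally uniform in $x$; for $j=2$ one uses $|h''|\le K$ together with the uniform continuity of $h''$ (valid since $h''$ is continuous, bounded, and monotone on each half-line by \textbf{A1}) to push $\theta_N\to 1$ through the expectation. In particular $p_N''(x)=\theta_N^2\,\mathbb{E}\int_0^\infty e^{-\alpha t}h''(\theta_N x+\sqrt{\theta_N}B_t)\,dt\in[\theta_N^2 k/\alpha,\ \theta_N^2 K/\alpha]$, so $p_N''$ is bounded away from $0$ uniformly in $N\ge 2$, and hence
\[
H_N(c):=\tfrac1{\beta_N}\tanh(\beta_N c)-\tfrac{p_N'(c)-1}{p_N''(c)},\qquad H(c):=\tfrac1{\beta}\tanh(\beta c)-\tfrac{p_1'(c)-1}{p_1''(c)}
\]
are continuous on $[0,\infty)$ and $H_N\to H$ uniformly on compacts, with $c_N$ (resp.\ $c$) the unique positive zero of $H_N$ (resp.\ $H$).

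Next I would establish an $N$-uniform bound $\sup_N c_N<\infty$. Since $\tanh<1$ and $p_N''>0$, \eqref{c2} gives $p_N'(c_N)-1=p_N''(c_N)\,\tfrac1{\beta_N}\tanh(\beta_N c_N)\le\tfrac{1}{\beta_N}p_N''(c_N)\le\tfrac{K}{\alpha\beta_N}\le\tfrac{K}{\alpha\beta_2}$, so $p_N'(c_N)\le M$ for a constant $M$ independent of $N$. On the other hand, writing $m_\sigma(y):=\mathbb{E}[h'(y+\mathcal N(0,\sigma^2))]$, this map is odd (symmetry of $h$ and of the Gaussian), increasing, and $m_\sigma'\ge k$, hence $m_\sigma(y)\ge ky$ for $y\ge0$; taking $\sigma^2=\theta_N t$ and integrating yields $p_N'(x)\ge\tfrac{k\theta_N^2}{\alpha}x$ for $x\ge 0$. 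Combining, $c_N\le\tfrac{\alpha M}{k\theta_N^2}\le\tfrac{\alpha M}{k\theta_2^2}$ for all $N\ge2$. The convergence then follows by a standard subsequence argument: any subsequence of $(c_N)$ has a further subsequence $c_{N_k}\to c_\star\in[0,\infty)$; by local uniform convergence and continuity, $H(c_\star)=\lim_k H_{N_k}(c_{N_k})=0$; since $p_1'(0)=0$ one has $H(0)=1/p_1''(0)>0$, so $c_\star>0$, and uniqueness of the positive zero of $H$ forces $c_\star=c$. As every subsequence has a sub-subsequence tending to $c$, we get $c_N\to c$.

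For the monotonicity when $h(x)=x^2$, the integral in \eqref{p_N} is Gaussian: $p_N(x)=\tfrac{\theta_N^2}{\alpha}x^2+\tfrac{\theta_N}{\alpha^2}$, whence $p_N'(x)=\tfrac{2\theta_N^2}{\alpha}x$ and $p_N''(x)\equiv\tfrac{2\theta_N^2}{\alpha}$, and \eqref{c2} collapses to $f_N(c_N)=\tfrac{\alpha}{2\theta_N^2}$ with $f_N(c):=c-\tfrac1{\beta_N}\tanh(\beta_N c)$. Since $f_N(0)=0$ and $f_N'(c)=\tanh^2(\beta_N c)>0$ for $c>0$, $f_N$ is a strictly increasing bijection of $(0,\infty)$ onto $(0,\infty)$, so $c_N$ is well defined. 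The right side $\tfrac{\alpha}{2\theta_N^2}$ is strictly decreasing in $N$ (as $\theta_N\uparrow$), and for each fixed $c>0$ the map $\beta\mapsto\tfrac1\beta\tanh(\beta c)$ is strictly decreasing — its derivative has the sign of $\beta c\,\sech^2(\beta c)-\tanh(\beta c)$, which equals $(u-\tfrac12\sinh 2u)/\cosh^2 u<0$ for $u=\beta c>0$ — so, $\beta_N=\sqrt{2\alpha\theta_N}$ being increasing, $f_N(c)$ is increasing in $N$ for each fixed $c>0$. Therefore $f_{N+1}(c_N)\ge f_N(c_N)=\tfrac{\alpha}{2\theta_N^2}>\tfrac{\alpha}{2\theta_{N+1}^2}=f_{N+1}(c_{N+1})$, and strict monotonicity of $f_{N+1}$ forces $c_N>c_{N+1}$.

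The routine pieces are the two short computations in the quadratic case and the monotonicity of $\beta\mapsto\beta^{-1}\tanh(\beta c)$. I expect the real work to be twofold: first, the uniform-on-compacts $C^2$ convergence $p_N\to p_1$ with $p_N''$ uniformly bounded away from $0$ — this is precisely where the two-sided bound on $h''$ and its monotonicity (giving uniform continuity) in Assumption \textbf{A1} are used; second, the $N$-uniform a priori upper bound on $c_N$, without which the compactness step would not close. Everything else is soft.
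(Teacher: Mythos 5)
Your proposal is correct, and at its core it follows the same strategy as the paper's Appendix C: view $c$ and $c_N$ as the unique positive zeros of the functions $f_1$ and $f_N$ built from \eqref{constant_c} and \eqref{c2}, show $f_N\to f_1$, and transfer convergence of the functions to convergence of the roots; in the quadratic case both reduce \eqref{c2} to the explicit relation $c_N-\beta_N^{-1}\tanh(\beta_N c_N)=\alpha/(2\theta_N^2)$. Where you differ is in execution, and in both places your version is the more complete one. For the limit, the paper only records pointwise convergence $f_N(c)\to f_1(c)=0$ together with the assertion that $f_N'<0$ beyond the zero, and then concludes $c_N\to c$ "by uniqueness of the zeros"; strictly speaking this needs either a uniform quantitative control of $f_N$ near its zero or a compactness step. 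Your locally uniform $C^2$ convergence of $p_N$ (using the two-sided bound on $h''$ from \textbf{A1}), the uniform lower bound $p_N''\ge \theta_N^2 k/\alpha$, the a priori bound $\sup_N c_N<\infty$ obtained from $\tanh<1$ and $p_N'(x)\ge k\theta_N^2 x/\alpha$, and the subsequence argument (with $H(0)=1/p_1''(0)>0$ ruling out a zero limit) supply exactly the missing glue. For monotonicity, the paper treats $N$ as a continuous parameter, writes $\partial c_N/\partial N=-(\partial f_N/\partial N)/(\partial f_N/\partial c_N)$ with $\partial f_N/\partial c_N=-\tanh^2(\cdot)<0$, and leaves the sign of $\partial f_N/\partial N$ to "simple computations"; your discrete comparison $f_{N+1}(c_N)\ge f_N(c_N)=\alpha/(2\theta_N^2)>\alpha/(2\theta_{N+1}^2)=f_{N+1}(c_{N+1})$, using that $\beta\mapsto\beta^{-1}\tanh(\beta c)$ is decreasing (since $u-\tfrac12\sinh 2u<0$) and that $f_{N+1}$ is strictly increasing, proves the same statement without invoking differentiability in $N$ and makes the sign check explicit. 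Both arguments are valid; yours trades brevity for a self-contained verification of the steps the paper leaves implicit.
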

{
\begin{remark}\label{remark3}
It is no surprise from our earlier analysis that   MFGs  are  different in nature from  $N$-player games. 
For instance, the MFG  degenerates to a single-player game in the sense that its NE 
is threshold-type bang-bang policy where the threshold is state independent
 while the NEs for the $N$-player game are state dependent. Nevertheless, it is 
still somewhat unexpected to see the total collapse  of the MFG to the single player problem from the above proposition.
This could be a result of  {\it over aggregation} in the MFG formulation: players become more {\it anticipative} when they are assumed to be identical.
\end{remark}
}

Next, denote $v_{(N)}^i$ as the NE value of player $i$ in the $N$-player game. By (\ref{n-value-i}), when $x_1=\cdots=x_N=x$, 
\begin{eqnarray}\label{value_same_initial}
v_{(N)}^i(x,x,\cdots,x) = \frac{-p^{\prime \prime}_N(c_N)}{\frac{2(N-1) \alpha}{N}  \cosh \left(c_N\sqrt{\frac{2(N-1)\alpha}{N}}\right)} +p_N(0).
\end{eqnarray}
In particular,  $v_{(N)}^i(x,x,\cdots,x)$ is independent of $x$.  Moreover, from  Proposition \ref{constant_convergence} and the smoothness of $P_N(x)$,  it is easy to verify that
\begin{eqnarray*}
\left\{
\begin{array}{ll}
&\lim_{N \rightarrow \infty} p^{\prime \prime}_N(c_N) = p^{\prime \prime}_N(c),\\
&\lim_{N \rightarrow \infty} \frac{1}{\frac{2(N-1) \alpha}{N}  \cosh \left(c_N\sqrt{\frac{2(N-1)\alpha}{N}}\right)} = \frac{1}{2 \alpha \cosh \left(c\sqrt{2\alpha}\right)},\\
&\lim_{N \rightarrow \infty} p_N(0) = p_1(0).
\end{array} \right.
\end{eqnarray*}
That is, 
\begin{proposition}\label{proposition-convergence}
For any $x \in \mathbb{R}$,  
$\lim_{N \rightarrow \infty} v_{(N)}^i(x,x,\cdots,x) = v^{*}(x)$, where $v^{*}$ is the NE value of player $i$ in  MFG (\ref{game}) with $\mu_{0-}=\delta(x)$.
\end{proposition}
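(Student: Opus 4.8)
The plan is to pass to the limit directly in the closed-form expression for the $N$-player NE value at a symmetric initial configuration and to match the result with the closed-form MFG value. By (\ref{value_same_initial}),
\[
v_{(N)}^i(x,\dots,x) = \frac{-p_N''(c_N)}{\frac{2(N-1)\alpha}{N}\cosh\!\big(c_N\sqrt{2(N-1)\alpha/N}\big)} + p_N(0),
\]
which in particular does not depend on $x$; so it suffices to compute the limit of the right-hand side and to check that it equals $v^*(x)$.

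The one analytic input is that $p_N\to p_1$ together with its first two derivatives, locally uniformly. From the representation (\ref{p_N}), as $N\to\infty$ one has $\tfrac{N-1}{N}\to1$ and $\sqrt{N/(N-1)}\to1$, so the integrand $h\big(\tfrac{N-1}{N}(x+\sqrt{N/(N-1)}\,B_t)\big)$ converges pointwise to $h(x+B_t)$. Assumption \textbf{A1} (in particular $0<k<h''\le K$, which forces at most quadratic growth of $h$ and at most linear growth of $h'$) furnishes a majorant integrable against $e^{-\alpha t}\,\mathbb{E}[\cdot]$ uniformly in $N$, so dominated convergence gives $p_N(x)\to p_1(x)$, and the same bounds justify differentiating twice under the expectation, yielding $p_N'(x)\to p_1'(x)$ and $p_N''(x)\to p_1''(x)$, uniformly on compacts. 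These are precisely the limits quoted just before the statement, and the estimates coincide with those used in Appendix C for Proposition \ref{constant_convergence}.

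Combining this with Proposition \ref{constant_convergence} ($c_N\to c$), the continuity of $\xi\mapsto p_1''(\xi)$ and $\xi\mapsto\cosh(\xi\sqrt{2\alpha})$, and $\tfrac{2(N-1)\alpha}{N}\to 2\alpha$, I would pass to the limit term by term to obtain
\[
\lim_{N\to\infty} v_{(N)}^i(x,\dots,x) = \frac{-p_1''(c)}{2\alpha\cosh(c\sqrt{2\alpha})} + p_1(0).
\]
It remains to identify the right-hand side with $v^*(x)$. Since $\mu_{0-}=\delta(x)$ we have $m_{0-}=x$, so the initial point sits at the left endpoint of the branch $m_{0-}\le x\le m_{0-}+c$ of (\ref{stationary-solution}); evaluating there, the $\cosh$-term is taken at $0$ and $p_1$ at $0$, giving $v^*(x)=-\tfrac{p_1''(c)}{2\alpha\cosh(c\sqrt{2\alpha})}+p_1(0)$, which coincides with the limit above. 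This is also consistent with the fact that $v^*$ at the mean position is independent of $x$, mirroring the $x$-independence of $v_{(N)}^i(x,\dots,x)$.

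The only genuine work is the second step: justifying the interchange of the limit in $N$, the expectation, and two $x$-derivatives in (\ref{p_N}), i.e. exhibiting the uniform-in-$N$ dominating functions. Under \textbf{A1} this is routine, since the growth bounds on $h,h',h''$ produce Gaussian-integrable majorants; everything else is algebraic limit-taking.
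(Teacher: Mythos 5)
Your proposal is correct and follows essentially the same route as the paper: start from the closed-form expression (\ref{value_same_initial}), use Proposition \ref{constant_convergence} together with the convergence of $p_N$ and its derivatives to $p_1$, and identify the limit with the branch of (\ref{stationary-solution}) evaluated at $x=m_{0-}$. The only difference is that you spell out the dominated-convergence justification for $p_N''\to p_1''$ that the paper dismisses as ``easy to verify,'' which is a harmless (indeed welcome) elaboration.
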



Figure \ref{fig:v_N} shows the convergence of $v_{(N)}^i(x,x,\cdots,x)$ 
with $h=x^2$ and with different choices of $\alpha$. The MFG is illustrated by the dashed red horizontal line. 
%
%

\begin{figure}[H] 
\centering
\begin{subfigure}{.3\textwidth}
  \centering
  \includegraphics[width=1.0\linewidth]{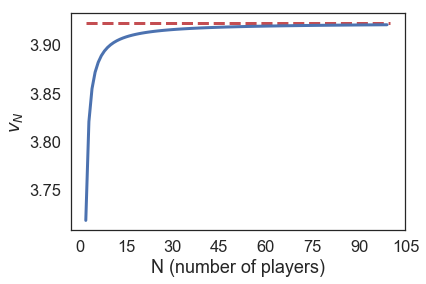}
  \caption{$\alpha=0.2$}
  \label{fig:v_N_1}
\end{subfigure}
\begin{subfigure}{.3\textwidth}
  \centering
  \includegraphics[width=1.0\linewidth]{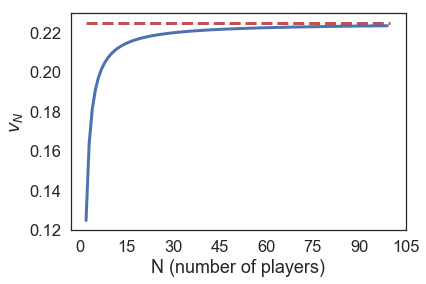}
  \caption{$\alpha=2$}
  \label{fig:v_N_2}
\end{subfigure}%
\begin{subfigure}{.3\textwidth}
  \centering
  \includegraphics[width=1.0\linewidth]{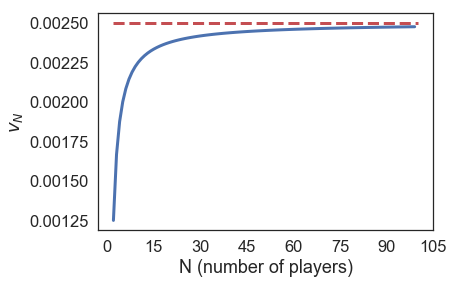}
  \caption{$\alpha=20$}
  \label{fig:v_N_3}
\end{subfigure}
\caption{\label{fig:v_N}
             Convergence of $v_N$ with different discount factors}
\end{figure}

{
\begin{remark}\label{remark2}
 Figure~\ref{fig:v_N} indicates that $v_N$ is an increasing function of $N$ given any fixed decay parameter $\alpha$. This implies that when the number of players increases, it is more costly for players to keep track of other players before making  decisions. Meanwhile,  $v^*(x)$ being a decreasing function of $\alpha$ indicates that the bigger the $\alpha$, the less frequent players will  intervene.
 
 \end{remark}
 }

\subsection{Approximating the $N$-player Game by the MFG}

One can further show that the NE of MFG given in (\ref{optimal-mfg-control}) is an  $\epsilon$-NE for the game in (\ref{N-game}).  

\begin{definition}[$\epsilon$-NE] For the game (\ref{N-game}) with an initial distribution $\mu_{0-}$, a control vector $\pmb{\xi} =({\xi}^1,\ldots,{\xi}^N)$ is called 
its \emph{$\epsilon$-NE}, if for any $i=1,\ldots,N$ and any control ${\xi}^{i'}$ such that $\left(\pmb{\xi}^{-i},{\xi}^{i '}\right) = \left({\xi}^{1},\ldots,{\xi}^{i-1},{\xi}^{i'},{\xi}^{i+1},\ldots,{\xi}^{N}\right)\in \mathcal{S}_N$, 
\begin{eqnarray}\label{epsilon_ne}
\mathbb{E} \left[J^i_{(N)} \left(\pmb{X}_{0-};\pmb{\xi}\right) \right]\leq \mathbb{E} \left[J^i _{(N)}\left(\pmb{X}_{0-};\left(\pmb{\xi}^{-i},{\xi}^{i '}\right)\right) \right]+ \epsilon.
\end{eqnarray}
Here $X_{0-}^{i}$ $(i=1,2,\cdots,N)$ are independent samples from distribution $\mu_{0-}$, and $\mathcal{S}_N$ is defined in (\ref{S_N}).
\end{definition}

\begin{theorem}[$\epsilon$-NE of the $N$-player game]\label{MFG_approximation}
Let  $\xi^*$ be  the NE of MFG  given in (\ref{optimal-mfg-control}), then it is an $\epsilon$-NE of the game (\ref{N-game}), with $\epsilon = O\left(\frac{1}{\sqrt{N}}\right)$.
\end{theorem}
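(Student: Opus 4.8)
The plan is to run the classical mean-field coupling argument (as in \cite{HMC2006, CDLL2015, GJ2018}), exploiting the structural feature that under the MFG feedback $\xi^*$ of \eqref{optimal-mfg-control} \emph{every} controlled player evolves as a Brownian motion reflected inside the \emph{bounded} interval $[m_{0-}-c,m_{0-}+c]$, so that the empirical center $m^{(N)}_t=\tfrac1N\sum_{j=1}^N X^j_t$ stays within $O(1/\sqrt N)$ of $m_{0-}$ uniformly in $t$. Consider the profile $\pmb{\xi}^*=(\xi^{1*},\dots,\xi^{N*})$ in which each player uses the MFG feedback $\xi^*$. Fix player $i$ and an admissible deviation $\xi^{i'}$ with $(\pmb{\xi}^{-i*},\xi^{i'})\in\mathcal{S}_N$. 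If $\mathbb E[J^i_{(N)}(\pmb{X}_{0-};(\pmb{\xi}^{-i*},\xi^{i'}))]\ge\mathbb E[J^i_{(N)}(\pmb{X}_{0-};\pmb{\xi}^*)]$ there is nothing to prove, so we may assume $\mathbb E[J^i_{(N)}(\pmb{X}_{0-};(\pmb{\xi}^{-i*},\xi^{i'}))]<C_0:=\sup_N\mathbb E[J^i_{(N)}(\pmb{X}_{0-};\pmb{\xi}^*)]$, with $C_0<\infty$ because under $\pmb{\xi}^*$ the running cost $h(X^i_t-m^{(N)}_t)$ is bounded (its argument lies in $[-2c,2c]$) and $\xi^{i*}$ has finite discounted fuel, both uniformly in $N$. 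Writing $\hat J^i(x;\xi,m):=\mathbb E\int_0^\infty e^{-\alpha t}[h(X_t-m)\,dt+d\check\xi_t]$ with $X_t=x+B^i_t+\xi^+_t-\xi^-_t$ for the objective of the fixed-mean control problem \eqref{fixed_m_control}, we chain the estimates (A) $\mathbb E[J^i_{(N)}(\pmb{X}_{0-};\pmb{\xi}^*)]\le\mathbb E[v^*(X^i_{0-})]+\epsilon_1$, (B) $\mathbb E[v^*(X^i_{0-})]\le\mathbb E[\hat J^i(X^i_{0-};\xi^{i'},m_{0-})]$, and (C) $\mathbb E[\hat J^i(X^i_{0-};\xi^{i'},m_{0-})]\le\mathbb E[J^i_{(N)}(\pmb{X}_{0-};(\pmb{\xi}^{-i*},\xi^{i'}))]+\epsilon_2$; adding them gives \eqref{epsilon_ne} with $\epsilon=\epsilon_1+\epsilon_2$, so the work is to show $\epsilon_1,\epsilon_2=O(1/\sqrt N)$.

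Inequality (B) is immediate: by Theorem \ref{MFG-SOL} and Proposition \ref{optimal_control_fixed_mu}, with the fixed-point mean $m_t\equiv m_{0-}$ one has $v^*(x)=\inf_{\xi\in\mathcal{U}_\infty}\hat J^i(x;\xi,m_{0-})$, and a standard conditioning argument on the Brownian motions $\{B^j\}_{j\ne i}$ (which freezes the extra information the deviation may use) turns $\xi^{i'}$ into an admissible control for the fixed-mean problem, so $\hat J^i(x;\xi^{i'},m_{0-})\ge v^*(x)$ after taking conditional expectations. For (A) and (C) the crucial point is that $\xi^*$ is a feedback in $(t,X^i)$ only and does \emph{not} observe the empirical center, so the trajectory $X^i$ is the \emph{same} process in the $N$-player game and in the corresponding fixed-mean problem, and the fuel terms in the two payoffs coincide and cancel. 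Hence in both (A) and (C) the gap equals $\mathbb E\int_0^\infty e^{-\alpha t}\bigl(h(X^i_t-m^{(N)}_t)-h(X^i_t-m_{0-})\bigr)dt$, and the estimate reduces to controlling the difference of the $h$-terms by $|m^{(N)}_t-m_{0-}|$. Writing $m^{(N)}_t-m_{0-}=\tfrac1N(X^i_t-m_{0-})+\tfrac{N-1}N(\bar{X}^{-i}_t-m_{0-})$ with $\bar{X}^{-i}_t=\tfrac1{N-1}\sum_{j\ne i}X^j_t$, and using that the $\{X^j\}_{j\ne i}$ are i.i.d., each with law symmetric about $m_{0-}$ (so $\mathbb E X^j_t=m_{0-}$) and supported in $[m_{0-}-c,m_{0-}+c]$ (so $\mathbb E|X^j_t-m_{0-}|^2\le c^2$), one obtains $\mathbb E|\bar{X}^{-i}_t-m_{0-}|\le c/\sqrt{N-1}$ and, after integrating, $\mathbb E\int_0^\infty e^{-\alpha t}|m^{(N)}_t-m_{0-}|^2\,dt\le C_1/N$ with $C_1$ independent of $N$ (closing the estimate via $|X^i_t-m_{0-}|^2\le 2|X^i_t-m^{(N)}_t|^2+2|m^{(N)}_t-m_{0-}|^2$ and the bound on $X^i_t-m^{(N)}_t$ recorded below).

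For (A), under $\pmb{\xi}^*$ all trajectories lie in $[m_{0-}-c,m_{0-}+c]$, hence $h$ is Lipschitz on the relevant range with constant $C_h:=\sup_{|y|\le 2c}|h'(y)|$ and $\epsilon_1\le C_h\int_0^\infty e^{-\alpha t}\mathbb E|m^{(N)}_t-m_{0-}|\,dt\le\tfrac{C_h}{\alpha}\bigl(\tfrac cN+\tfrac c{\sqrt{N-1}}\bigr)=O(1/\sqrt N)$. For (C) the deviating $X^i$ need not be bounded, so instead we use $h'(0)=0$ and $h''\le K$ from \textbf{A1}, which give $|h'(y)|\le K|y|$ and hence $|h(X^i_t-m^{(N)}_t)-h(X^i_t-m_{0-})|\le K\bigl(|X^i_t-m_{0-}|+|m^{(N)}_t-m_{0-}|\bigr)|m^{(N)}_t-m_{0-}|$; a Cauchy--Schwarz splitting bounds $\epsilon_2$ by $K\bigl(\mathbb E\!\int e^{-\alpha t}|X^i_t-m_{0-}|^2dt\bigr)^{1/2}\bigl(\mathbb E\!\int e^{-\alpha t}|m^{(N)}_t-m_{0-}|^2dt\bigr)^{1/2}+K\,\mathbb E\!\int e^{-\alpha t}|m^{(N)}_t-m_{0-}|^2dt$. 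The first factor is $O(1)$: the convexity bound $h(x)\ge h(0)+\tfrac k2x^2\ge\tfrac k2x^2$ together with the reduction $\mathbb E\!\int e^{-\alpha t}h(X^i_t-m^{(N)}_t)dt\le C_0$ yields $\mathbb E\!\int e^{-\alpha t}|X^i_t-m^{(N)}_t|^2dt\le 2C_0/k$, and then the inequality for $|X^i_t-m_{0-}|^2$ above gives a bound on $\mathbb E\!\int e^{-\alpha t}|X^i_t-m_{0-}|^2dt$ uniform in $N$. Combining with the second factor being $O(1/N)$, we obtain $\epsilon_2=O(1/\sqrt N)$, and therefore $\epsilon=\epsilon_1+\epsilon_2=O(1/\sqrt N)$.

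The main obstacle is exactly the deviation scenario (C): since the deviating player's state may be unbounded and itself perturbs $m^{(N)}$, there is no global Lipschitz bound on $h$ to lean on, and the argument must combine the a priori $L^2$-in-time control of $X^i_t-m^{(N)}_t$ (from the quadratic lower bound on $h$ and the finite-cost reduction) with the uniform-in-time concentration $\mathbb E\!\int_0^\infty e^{-\alpha t}|m^{(N)}_t-m_{0-}|^2dt=O(1/N)$. The remaining points are routine: $\pmb{\xi}^*\in\mathcal{S}_N$ because, after at most a cosmetic ordering of the (at most one) initial jump of each player --- which leaves every value unchanged --- the controls $\xi^{j*}$ are generated by local times of independent reflected Brownian motions, which almost surely never increase simultaneously, so \eqref{no_jump_N_player} holds; and all interchanges of expectation, conditioning, and discounted time-integrals are justified by the integrability built into $\mathcal{U}_\infty$, $\mathcal{S}_N$, and the discount factor $\alpha>0$.
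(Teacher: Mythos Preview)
Your proof is correct and follows essentially the same mean-field coupling strategy as the paper: both exploit that under $\pmb{\xi}^*$ the non-deviating players are i.i.d.\ reflected Brownian motions confined to $[m_{0-}-c,m_{0-}+c]$, so their empirical mean concentrates around $m_{0-}$ with second moment $O(1/N)$, and then transfer this via the quadratic growth bound $|h'(y)|\le K|y|$ from \textbf{A1}.

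The organization differs slightly. The paper Taylor-expands $h$ about the point $\tfrac{N-1}{N}(\hat X^1_t-m_{0-})$, thereby absorbing the deviating player's own contribution to $m^{(N)}_t$ into the argument of $h$ and reducing to a single-player problem with running cost $h\bigl(\tfrac{N-1}{N}(\cdot-m_{0-})\bigr)$, which is then compared to $v^*$ using the linear growth of the value function. You instead compare directly with the fixed-mean problem at $m_{0-}$ via the (A)--(B)--(C) chain, and handle the self-contribution $\tfrac1N(X^i_t-m_{0-})$ through the closing-loop argument that couples $\mathbb{E}\!\int e^{-\alpha t}|X^i_t-m_{0-}|^2dt$ and $\mathbb{E}\!\int e^{-\alpha t}|m^{(N)}_t-m_{0-}|^2dt$, using the lower bound $h(x)\ge\tfrac{k}{2}x^2$ to obtain the a~priori $L^2$ control from the finite-cost reduction. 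Your treatment of the unbounded deviating trajectory in step (C) is more explicit than the paper's (which states the relevant cross-term bound only ``when $\hat X^1_t$ is under the threshold-type control'' and then applies it tacitly to general $\eta$); in this respect your argument is a bit more self-contained, at the cost of the extra bootstrap. Both proofs ultimately rest on the same three ingredients: boundedness of the non-deviating players, $\mathbb{E}|\bar X^{-i}_t-m_{0-}|^2=O(1/N)$ (which uses the symmetry of $\mu_{0-}$ about $m_{0-}$), and $h''\le K$.
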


\begin{proof}

Given the game (\ref{N-game}) with  $m_{0-}=\int x \mu_{0-}(dx) \in \mathbb{R}$, assume that each player $i$ in the $N$-player game takes the control $(\xi^{i*,+}_t,\xi^{i*,-}_t)$ according to  the NE  of the MFG such that 
\begin{eqnarray}\label{optimal-mfg-n-player-game}
\begin{aligned}
\xi^{i*,+}_t &=& \max\left\{0,\max_{0 \leq u \leq t}\left\{ m_{0-}-X^i_{0-} -B_u^i + \xi_u^{i*,-} -c\right\}\right \},\\
\xi^{i*,-}_t &=& \max \left\{0,\max_{0 \leq u \leq t}\left\{ X^i_{0-}-m_{0-}+ B_u^i + \xi_u^{i*,+} -c\right\} \right\}.
\end{aligned}
\end{eqnarray}
To see that  $\pmb{\xi}^{*}= \left({\xi}^{1*},\ldots,{\xi}^{N*} \right) \in \mathcal{S}_N$, 
 define 
\begin{eqnarray*}
\mathcal{CW}_{mfg} &=& \left\{\left.\pmb{x} \in \mathbb{R}^N\right \vert \ |x_i-m_{0-}| <c \mbox{ for } i=1,2,\cdots,N  \right\},\\
E_{mfg,i}^{-} &=& \left\{ \left.\pmb{x} \in \mathbb{R}^N \right\vert \enspace x^i-m_{0-} \leq -c \right\},\nonumber\\
E_{mfg,i}^{+} &=& \left\{\left. \pmb{x} \in \mathbb{R}^N \right \vert \enspace x^i-m_{0-}  \geq c \right\},\nonumber
\end{eqnarray*}
with the partition
\begin{eqnarray*}
Q_{{mfg},i} &=& \left\{ \left.\pmb{x} \in \mathbb{R}^N  \right\vert \quad  \left|x^i - m_{0-}\right|  \geq  \left|x^k- m_{0-} \right|, \mbox{ for any } k < i ; \right.\nonumber\\ 
&&\hspace{56pt}\left.{}  \left|x^i -m_{0-}\right|  >  \left|x^k-m_{0-} \right| , \mbox{ for any } k > i  \right\}.\nonumber
\end{eqnarray*}
Then the control in (\ref{optimal-mfg-n-player-game}) corresponds to the  action region 
$ \mathcal{A}_{mfg,i} = \left\{E_{mfg,i}^{-} \cup E_{mfg,i}^{+}\right\} \cap Q_{mfg,i}.$
The independence of $\{B_t^1,\ldots,B_t^N\}$ and the continuity of $\{X_t^{1*},\ldots,X_t^{N*}\}_{t>0}$  imply that for any $t \ge 0$
$\mathbb{P} \left(\Pi_{i=1,\ldots,N}d \xi^{i*}_t=0 \right)=1$.

Suppose that only one player, and without loss of generality, player one, deviates her control $\eta_t = (\eta_t^+,\eta_t^-)$ from all the other players  such that $(\pmb{\xi}^{-i*},{\eta}) \in \mathcal{S}_N$. 
Let $\hat{X}_t^1$ be the new position of player one under control $(\eta_t^+,\eta_t^-)$ with initial value $X_{0-}^1$. 
 Then

\begin{eqnarray*}
&& h \left( \hat{X}^1_t - \frac{1}{N} \left( \hat{X}_{t}^1 +\sum_{j=2, \ldots, N}X_{t}^j \right) \right) \\
&=& h \left( \hat{X}^1_t - \frac{1}{N} \hat{X}_{t}^1 - \frac{N-1}{N} m_{0-} \right)  + h^{\prime} \left(\hat{X}^1_t - \frac{1}{N} \hat{X}_{t}^1 - \frac{N-1}{N} m_{0-} \right)\left(\frac{\sum_{j=2, \ldots, N}X_{t}^j}{N} -\frac{N-1}{N}m_{0-} \right)\\
&\quad& + \frac{h^{\prime \prime}(U_t)}{2}\left(\frac{\sum_{j=2, \ldots, N}X_{t}^j}{N} -\frac{N-1}{N}m_{0-} \right)^2,
\end{eqnarray*}
where $U_t$ is a process  between  $\left(\hat{X}^1_t - \frac{1}{N} \hat{X}_{t}^1 - \frac{N-1}{N} m_{0-}\right) $ and $\left(\hat{X}^1_t - \frac{1}{N} \left( \hat{X}_{t}^1 +\sum_{j=2}^{ N}X_{t}^j\right)\right)$.
By  Assumption \textbf{A1},  
\begin{eqnarray*}
&& h \left( \hat{X}^1_t - \frac{1}{N} \left( \hat{X}_{t}^1 +\sum_{j=2, \ldots, N}X_{t}^j \right) \right) \\
&\leq& h \left( \hat{X}^1_t - \frac{1}{N} \hat{X}_{t}^1 - \frac{N-1}{N} m_{0-} \right)  + h^{\prime} \left( \hat{X}^1_t - \frac{1}{N} \hat{X}_{t}^1 - \frac{N-1}{N} m_{0-} \right)\left(\frac{\sum_{j=2}^{ N}X_{t}^j}{N} -\frac{N-1}{N}m_{0-} \right)\\
&\quad& + \frac{K}{2}\left(\frac{\sum_{j=2}^{N}X_{t}^j}{N} -\frac{N-1}{N}m_{0-} \right)^2\\
&\leq & h \left( \hat{X}^1_t - \frac{1}{N} \hat{X}_{t}^1 - \frac{N-1}{N} m_{0-} \right)  + K \left|\hat{X}^1_t - \frac{1}{N} \hat{X}_{t}^1 - \frac{N-1}{N} m_{0-} \right|\cdot \left|\frac{\sum_{j=2}^{ N}X_{t}^j}{N} -\frac{N-1}{N}m_{0-} \right|\\
&& +\frac{K}{2}\left(\frac{\sum_{j=2}^N X_{t}^j}{N} -\frac{N-1}{N}m_{0-} \right)^2.
\end{eqnarray*}
Similarly,
\begin{eqnarray*}
&& h \left( \hat{X}^1_t - \frac{1}{N} \left( \hat{X}_{t}^1 +\sum_{j=2, \ldots, N}X_{t}^j \right) \right)\\
 &\geq& h \left( \hat{X}^1_t - \frac{1}{N} \hat{X}_{t}^1 - \frac{N-1}{N} m_{0-} \right)  - K \left|\hat{X}^1_t - \frac{1}{N} \hat{X}_{t}^1 - \frac{N-1}{N} m_{0-} \right|\cdot \left|\frac{\sum_{j=2}^ N X_{t}^j}{N} -\frac{N-1}{N}m_{0-} \right|\\
&& -\frac{K}{2}\left(\frac{\sum_{j=2}^N X_{t}^j}{N} -\frac{N-1}{N}m_{0-} \right)^2.
\end{eqnarray*}
{Moreover, under the control (\ref{optimal-mfg-n-player-game}), $X_t^j$ ($j=2,3,\cdots,N$) are independent and identically distributed and $|X_t^j-m_{0-}|\leq c$ a.s.. Therefore, 
\begin{eqnarray*}
\mathbb{E}\left(\frac{\sum_{j=2}^N X_t^j}{N}-\frac{N-1}{N} m_{0-}\right)^2 = \frac{\sum_{j=1}^N Var(X_t^j)}{N^2}\leq \frac{c^2}{N} = O(\frac{1}{N}),
\end{eqnarray*}
and 
\begin{eqnarray*}
\mathbb{E}\left|\frac{\sum_{j=2}^N X_t^j}{N}-\frac{N-1}{N} m_{0-}\right|\leq \left(\mathbb{E}\left(\frac{\sum_{j=2}^N X_t^j}{N}-\frac{N-1}{N} m_{0-}\right)^2\right)^{1/2} = O(\frac{1}{\sqrt{N}}).
\end{eqnarray*}
Therefore by the boundedness of $X_t^j$ ($j=2,3,\cdots,N$) and by the Fubini Theorem,
\begin{eqnarray*}
\mathbb{E}\int_0^{\infty}e^{-\alpha t}\left(\frac{\sum_{j=2}^N X_t^j}{N}-\frac{N-1}{N} m_{0-}\right)^2  dt = \int_0^{\infty}e^{-\alpha t}\mathbb{E}\left(\frac{\sum_{j=2}^N X_t^j}{N}-\frac{N-1}{N} m_{0-}\right)^2  dt = O\left( \frac{1}{N}\right).
\end{eqnarray*}
Similarly, when $\hat{X}_t^1$ is under the threshold-type control,
\begin{eqnarray}
\mathbb{E}\int_0^{\infty}e^{-\alpha t} \left|\hat{X}^1_t - \frac{1}{N} \hat{X}_{t}^1 - \frac{N-1}{N} m_{0-} \right|\cdot \left|\frac{\sum_{j=2}^{ N}X_{t}^j}{N} -\frac{N-1}{N}m_{0-} \right|  dt = O(\frac{1}{\sqrt{N}}).
\end{eqnarray}}
Now, to minimize the following payoff function 
\begin{eqnarray}
&&\mathbb{E} \int_s^{\infty}e^{-\alpha t} \left[h \left( \hat{X}_{t}^1-\frac{1}{N}\hat{X}_{t}^1-\frac{1}{N}\sum_{j=2}^N X_{t}^j \right)dt + d \eta_t^+ + d \eta_t^- \right]  \nonumber \\
&=& \mathbb{E}\int_s^{\infty}e^{-\alpha t} \left[h\left( \frac{N-1}{N}\hat{X}^1_t  - \frac{N-1}{N} m_{0-}\right)dt +d \eta_t^+ + d \eta_t^- \right] +O \left( \frac{1}{\sqrt{N}} \right). \label{slln} 
\end{eqnarray}
 is equivalent to solving the original fuel follower problem (\ref{fuelproblem})
 with a modified  running cost  $h(\frac{N-1}{N}(\cdot-m_{0-}))$. Since the  value function for 
  (\ref{fuelproblem}) is of a linear growth, 
\begin{eqnarray}\label{new_prob}
 (\ref{slln}) &\geq & \mathbb{E}\int_s^{\infty}e^{-\alpha t} \left[ h\left( \frac{N-1}{N}(\hat{X}^1_t  - m_{0-})\right)dt + d \eta_t^{1*,+} + d  \eta_t^{1*,-} \right] +O \left( \frac{1}{\sqrt{N}} \right)\\ 
 &=&  \mathbb{E} \left[ {{v}}^*({X}^1_{0-})\right]+O \left( \frac{1}{{N}} \right)+O \left( \frac{1}{\sqrt{N}} \right)\label{new_prob_exp2}
\end{eqnarray}
where $v^*({x})$ is defined in (\ref{stationary-solution}) and the expectation in (\ref{new_prob_exp2}) is with respect to the initial distribution $\mu_{0-}$.
The above analysis holds for any $(\eta_t^+,\eta_t^-) \in  \mathcal{U}^i_N$ such that $(\pmb{\xi}^{-i*},{\eta}) \in \mathcal{S}_N$. Hence the conclusion.
\end{proof}

\section{Discussions}\label{section:discussion}

\subsection{Multiple Explicit NEs for $N=2$}
When $N=2$, $h$ is symmetric with $h(X_t^1-m_t^{(2)}) = h(X_t^2-m_t^{(2)}) = h\left(\frac{X_t^1-X_t^2}{2}\right)$. This symmetry  simplifies significantly the solution structure and allows for the construction of multiple NEs.
Indeed, given the partition $Q_i$ in (\ref{region-i-charge}) for $N=2$,
$Q_1=0$, $Q_2 = \mathbb{R}^2$, one can write
 the NE and their corresponding values explicitly.
\begin{eqnarray}\label{nash-opt-control}
\begin{aligned} 
\xi^{2*}_t &= (\xi^{2*,+}_t,\xi^{2*,-}_t)\\
& = \left( \max \left\{ 0,  \max_{0 \leq u \leq t} \{  - x^2+x^1  - B_u^2 +B_u^1 + \xi_u^{2*,-}-c_2 \}  \right\}, \right.\\
&\hspace{10pt}\left. \max \left\{ 0, \max_{0 \leq u \leq t}\{x^2-x^1 +B_u^2 - B_u^1 + \xi_u^{2*,+} - c_2\} \right\} \right),\\
\xi^{1*}_t &= \left(\xi^{1*,+}_t,\xi^{1*,-}_t \right) = (0,0),
\end{aligned}
\end{eqnarray}
where $c_2>0$ is the unique positive solution of
\begin{eqnarray}\label{c0}
\frac{1}{\sqrt{\alpha}} \tanh\left(\sqrt{\alpha}x \right)= \frac{p_2^{\prime}(x)-1}{p_2^{\prime \prime}(x)},
\end{eqnarray}
with
 \begin{eqnarray*}\label{constant_c_0}
  p_2(x) = \mathbb{E} \left[ \int_0^{\infty}e^{-\alpha t}h \left(\frac{x}{2}+\frac{\sqrt{2}B_t}{2} \right)dt \right].
  \end{eqnarray*}
And the NE values are
  \begin{eqnarray}\label{v2}
    {v}^{2}(x^1,x^2)=\left\{
                \begin{array}{ll}
          v^2(x^1,x^1-c_2) - c_2-x^2+x^1, &  x^2-x^1 \leq - c_2 ,\\
                -\frac{p_2^{\prime \prime}(c_2) \cosh\left(\sqrt{\alpha}(x^2-x^1)\right)}{\alpha  \cosh\left(c_2\sqrt{\alpha}\right)} + p_2(x^2-x^1), & \vert x^2-x^1 \vert < c_2,\\
         x^2 -x^1- c_2  +v^2(x^1,x^1+c_2) , &  x^2-x^1 \geq c_2 ,
                \end{array}
              \right. 
 \end{eqnarray}
and
\begin{eqnarray}\label{v1}
   v^{1}(x^1,x^2)=\left\{
                \begin{array}{ll}
          v^{1}(x^1,x^1+c_2), &   x^1-x^2 \leq - c_2 ,\\
                -\frac{p_2^{\prime \prime}(c_2) \cosh \left(\sqrt{\alpha}(x^1-x^2)\right)}{\alpha  \cosh \left(c_2\sqrt{\alpha}\right)} + p_2(x^1-x^2),& \vert x^2- x^1  \vert < c_2,\\
        v^{1}(x^1,x^1-c_2) , &  x^1-x^2 \geq   c_2.
                \end{array}
              \right. 
  \end{eqnarray} 

\noindent There is in fact more than one NE. 
For instance, in addition to the above constructed NE, labeled as {\bf Case 1},   there are more NEs, including

\begin{enumerate}[font=\bfseries,leftmargin=3cm]
\item[Case 2:] $ \mathcal{A}_1 = \{(x^1,x^2) \given x ^1-x^2 > c_2 \mbox{ or } x ^1-x^2 < -c_2\} $ and $ \mathcal{A}_2 = \emptyset $,
\item[Case 3:] $ \mathcal{A}_1 = \{(x^1,x^2) \given x ^1-x^2 < -c_2\}$ and $ \mathcal{A}_2 = \{(x^1,x^2) \given x ^1-x^2 > c_2\}$,
\item[Case 4:] $ \mathcal{A}_1 = \{(x^1,x^2) \given x ^1-x^2 > c_2\}$ and $ \mathcal{A}_2 = \{(x^1,x^2) \given x ^1-x^2 < -c_2\}$.
\end{enumerate}
In {\bf Case 4}, clearly
\begin{eqnarray*}
\xi_t^{1*} &=&- \max \left\{ 0, \max_{0 \leq u \leq t}\left\{0,x^1-x^2 +B_u^1 - B_u^2 - \xi_u^{2*}-c_2\right\} \right \} ,\\
\xi_t^{2*} &= &-\max \left\{ 0,  \max_{0 \leq u \leq t} \left\{ 0,  x^2-x^1 +B_u^2 - B_u^1 - \xi_u^{1*}-c_2 \right\}  \right\} ,
\end{eqnarray*}
and the associated NE values  are
  \begin{eqnarray*}
    v^{1}(x^1,x^2)=\left\{
                \begin{array}{ll}
                v^1( x^1,x^1+c_2) , &   x^1-x^2 \leq -c_2, \\
                -\frac{p_2^{''}(c_2) \cosh\left(\sqrt{\alpha}(x^1-x^2)\right)}{\alpha  \cosh \left(c_2\sqrt{\alpha}\right)} + p_2(x^1-x^2), & \vert x^1-x^2 \vert < c_2,\\
         x^1 -x^2- c_2  +v^1(x^2+c_2,x^2),    & x^1-x^2 \geq c_2,
                \end{array}
              \right.
 \end{eqnarray*}
and
\begin{eqnarray*}
    v^{2}(x^1,x^2)=\left\{
                \begin{array}{ll}
                v^2( x^2+c_2,x^2), & x^2-x^1 \leq -c_2, \\
                  -\frac{p_2^{''}(c_2) \cosh \left(\sqrt{\alpha}(x^2-x^1)\right)}{\alpha  \cosh \left(c_2\sqrt{\alpha}\right)} + p_2(x^2-x^1),&  \vert x^2-x^1 \vert < c_2,\\
          x^2 -x^1- c_2  +v^2(x^1,x^1+c_2),       & x^2-x^1 \geq c_2.
          
                \end{array}
              \right.
  \end{eqnarray*} 
 Figure  \ref{figure_multi_NE}  illustrates all four NEs.
\begin{figure}[H] 
       
      \centering \includegraphics[width=0.7\columnwidth]{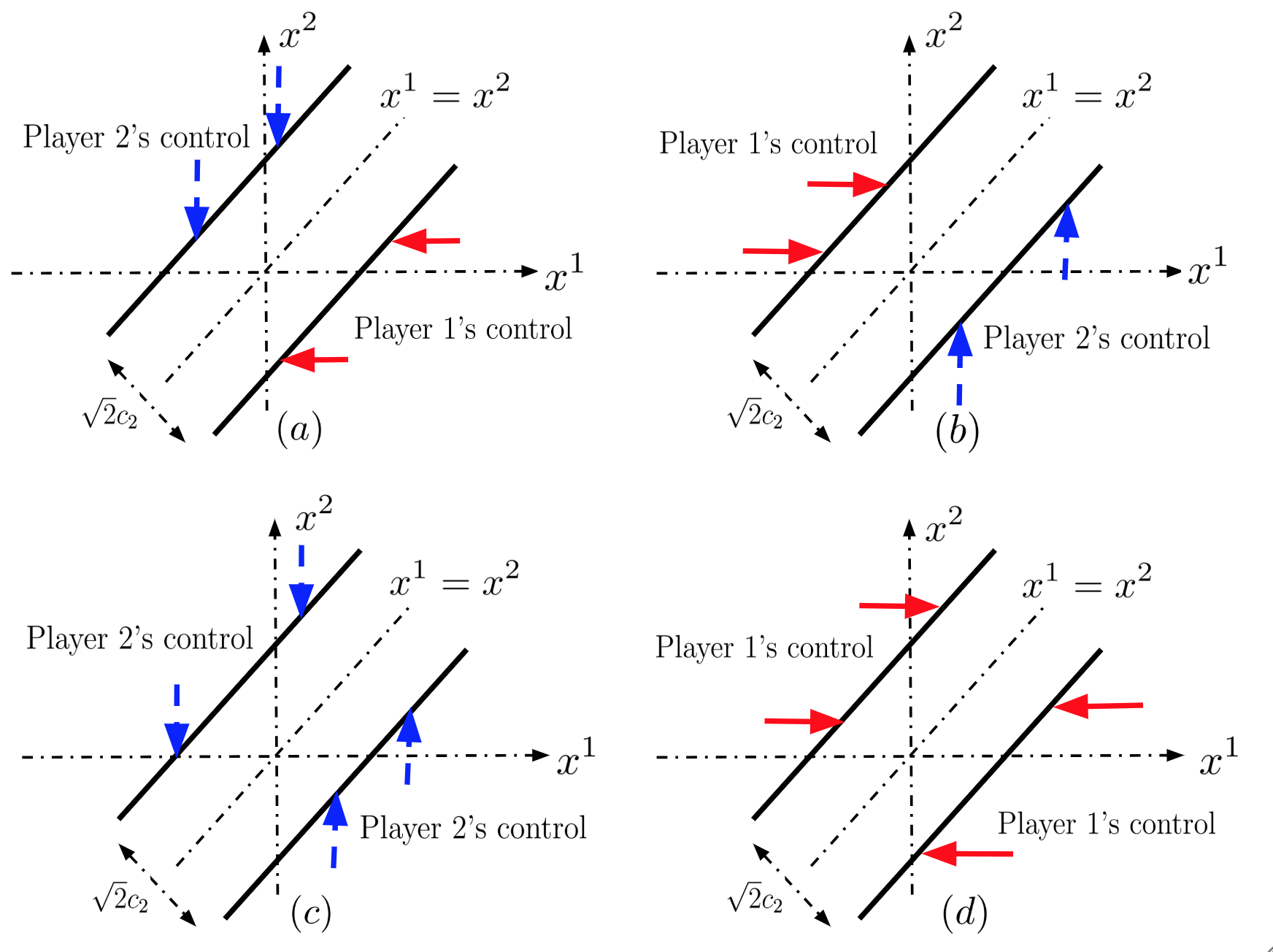}
       
        \caption{
                \label{figure_multi_NE}
        Four NEs when $N=2$
        }
\end{figure}

\subsection{With Varying $\alpha$}
\begin{proposition}\label{proposition:sensitivity}

When $h(x)=x^2$ and  $\alpha \geq 2^{-\frac{1}{3}}\frac{N-1}{N}$, $c_N$ increases with respect to $\alpha$.
\end{proposition}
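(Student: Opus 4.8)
The plan is to specialize the defining relation \eqref{c2} to $h(x)=x^2$, reduce it to a single scalar equation relating $c$ and $\alpha$, and then differentiate implicitly. First I would compute $p_N$ in closed form: with $\gamma:=\frac{N-1}{N}$ and $\beta:=\sqrt{2\gamma\alpha}=\sqrt{\frac{2(N-1)\alpha}{N}}$, the integrand in \eqref{p_N} is $(\gamma x+\sqrt\gamma\,B_t)^2$, whose expectation is $\gamma^2x^2+\gamma t$, so $p_N(x)=\frac{\gamma^2x^2}{\alpha}+\frac{\gamma}{\alpha^2}$, hence $p_N'(c)=\frac{2\gamma^2 c}{\alpha}$ and $p_N''(c)=\frac{2\gamma^2}{\alpha}$. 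Substituting into \eqref{c2}, the threshold $c_N=c_N(\alpha)$ is the unique positive root of
\[
F(c,\alpha):=c-\frac{\tanh(\beta c)}{\beta}-\frac{\alpha}{2\gamma^2}=0,
\]
equivalently $\beta c-\tanh(\beta c)=\frac{\beta^3}{4\gamma^3}$ (uniqueness is clear since $c\mapsto c-\tanh(\beta c)/\beta$ is strictly increasing from $0$ to $\infty$).

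Next I would apply the implicit function theorem. At $c=c_N>0$ one has $\partial_cF=1-\sech^2(\beta c)=\tanh^2(\beta c)>0$, so $\alpha\mapsto c_N(\alpha)$ is $C^1$ and $\operatorname{sign}c_N'(\alpha)=-\operatorname{sign}\partial_\alpha F$. Using $\frac{d\beta}{d\alpha}=\frac{\gamma}{\beta}$ and the abbreviation $g(y):=\tanh y-y\sech^2 y$ (note $g(0)=0$ and $g'(y)=2y\sech^2 y\tanh y>0$, so $g>0$ on $(0,\infty)$), a short calculation gives $\partial_\alpha F=\frac{\gamma\,g(\beta c)}{\beta^3}-\frac{1}{2\gamma^2}$. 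Thus $c_N'(\alpha)>0$ is equivalent to $2\gamma^3\,g(y)<\beta^3$, where $y:=\beta c_N>0$.

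The crux is then to feed the constraint back in: since $\beta^3=4\gamma^3(y-\tanh y)$, the inequality to be shown is exactly $g(y)<2(y-\tanh y)$. Because $g(y)=\tanh y-y\sech^2 y<\tanh y<1$ for $y>0$, it suffices that $y-\tanh y\ge\tfrac12$. Finally, combining $\beta^3=(2\gamma\alpha)^{3/2}$ with $\beta^3=4\gamma^3(y-\tanh y)$ gives $y-\tanh y=\tfrac{1}{\sqrt2}(\alpha/\gamma)^{3/2}$, and $\tfrac{1}{\sqrt2}(\alpha/\gamma)^{3/2}\ge\tfrac12\iff \alpha/\gamma\ge 2^{-1/3}\iff \alpha\ge 2^{-1/3}\frac{N-1}{N}$; this is precisely where the stated lower bound on $\alpha$ is used. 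I expect the only real difficulty to be bookkeeping — computing $\partial_\alpha F$ correctly while tracking the $\alpha$-dependence of $\beta$, and translating the hypothesis on $\alpha$ into the transparent condition $y-\tanh y\ge\tfrac12$ via the constraint. (The hypothesis is used only through the crude bound $g(y)<1$; sharpening it, e.g. by checking that $\phi(y):=2y+y\sech^2 y-3\tanh y$ satisfies $\phi(0)=0$ and $\phi'(y)=2\tanh y\,g(y)>0$, would give $c_N'(\alpha)>0$ for every $\alpha>0$, but that refinement is not needed for the stated range.)
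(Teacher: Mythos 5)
Your proof is correct and follows essentially the same route as the paper: both reduce to the explicit defining equation for $c_N$ when $h(x)=x^2$, apply implicit differentiation, and conclude via the crude bound $\tanh y<1$ (your condition $\beta^3\ge 2\gamma^3$ is exactly the paper's $\alpha\beta k_N^2\ge 1$, i.e.\ $\alpha\ge 2^{-1/3}\tfrac{N-1}{N}$). Your closing parenthetical — that $\phi(y)=2(y-\tanh y)-g(y)$ is positive for all $y>0$, so $c_N$ is in fact increasing for every $\alpha>0$ — is a correct strengthening that goes beyond the paper's stated range, and is worth keeping.
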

The proposition follows from simple calculations.
Take $h(x)=x^2$, $\frac{p_N^{\prime}(x)-1}{p_N^{\prime \prime}(x)} = x-  \frac{\alpha k_N^2}{2}$ with $k_N = \frac{N}{N-1}>1$. Rewrite $f_N$ as $f_N(x,\alpha) = \frac{\sqrt{k_N}}{\sqrt{2\alpha}} \tanh \left( \frac{\sqrt{2\alpha}}{\sqrt{k_N}} x \right)-x + \frac{k_N^2\alpha }{2}.
$
Then
$\frac{\partial f_N}{\partial x} = -\tanh^2  \left( \frac{\sqrt{2\alpha}}{\sqrt{k_N}} x \right)$ and 
$\frac{\partial f_N}{\partial \alpha}= 
\frac{x}{2\alpha}\left(1- \tanh ^2\left( \frac{\sqrt{2\alpha}}{\sqrt{k_N}} x \right)\right)
 -\frac{\sqrt{k_N}}{2\alpha\sqrt{2\alpha}}\tanh \left( \frac{\sqrt{2\alpha}}{\sqrt{k_N}} x \right) + \frac{k_N^2 }{2}.$
One can verify that
$\frac{\partial f_N}{\partial x}<0$  for any $\alpha$ and $\frac{\partial f_N}{\partial \alpha}>0$ when $\alpha > 2^{-\frac{1}{3}}k_N^{-1}$.
Hence $\frac{\partial c_N}{\partial \alpha} >0$ when $\alpha > 2^{-\frac{1}{3}}k_N^{-1}$ follows from  the chain rule and from $f(c_N(\alpha),\alpha) =0$ for any $N$.

Figure \ref{fig:c_N}, illustrates the convergence of $c_N$ with different discount factor $\alpha$. The value of $c$ is  shown in the red dash line.


\begin{figure}[H] 
\centering
\begin{subfigure}{.3\textwidth}
  \centering
  \includegraphics[width=1.0\linewidth]{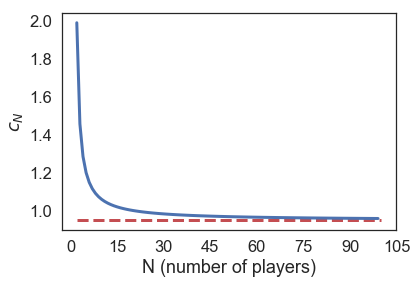}
  \caption{$\alpha=0.2$}
  \label{fig:c_N_1}
\end{subfigure}
\begin{subfigure}{.3\textwidth}
  \centering
  \includegraphics[width=1.0\linewidth]{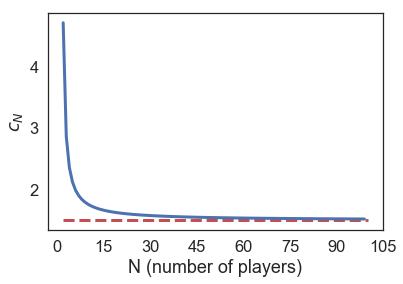}
  \caption{$\alpha=2$}
  \label{fig:c_N_2}
\end{subfigure}%
\begin{subfigure}{.3\textwidth}
  \centering
  \includegraphics[width=1.0\linewidth]{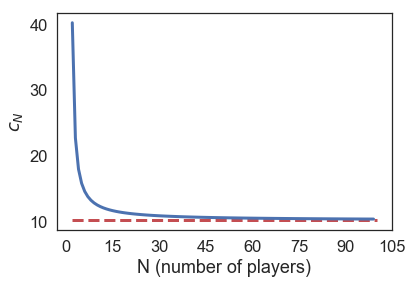}
  \caption{$\alpha=20$}
  \label{fig:c_N_3}
\end{subfigure}
\caption{\label{fig:c_N}
             Convergence of $c_N$ with different discount factors}
\end{figure}

{
\begin{remark} \label{remark1} Figure~\ref{fig:c_N} indicates that $c_N$ is a decreasing function of $N$ for any given discount factor $\alpha$. This implies that players will intervene  more frequently with  more players in the game.
Meanwhile,  $c$ being a decreasing function of $\alpha$ indicates that the bigger the $\alpha$, the less frequent players will intervene. These are consistent with  Figure~\ref{fig:v_N}.
\end{remark}
}

It is worth noting that the analysis for $\alpha_1=\cdots=\alpha_N=\alpha$ can be easily extended to the cases when $\alpha_i$'s are different.  The exact forms of the NEs, however, may be more complicated, as illustrated in the case of $N=2$ below. 

When $N=2$, denote $\alpha_i$ as the discount parameter for player $i$ ($i=1,2$).  Denote $c_2^{(i)}>0$ as the unique solution of
\begin{eqnarray}\label{c0-2}
\frac{1}{\sqrt{\alpha_i}} \tanh\left(\sqrt{\alpha_i}x \right)= \frac{p_2^{\prime}(x,\alpha_i)-1}{p_2^{\prime \prime}(x,\alpha_i)},
\end{eqnarray}
with 
 \begin{eqnarray*}\label{constant_c_0-2}
  p_2(x,\alpha_i) = \mathbb{E} \left[ \int_0^{\infty}e^{-\alpha_i t}h \left(\frac{x}{2}+\frac{\sqrt{2}B_t}{2} \right)dt \right].
  \end{eqnarray*}

\begin{corollary}[$N=2$ with $\alpha_1\ne \alpha_2$]\label{MNE-2}
Assume {\bf A1} for game (\ref{N-game}). If $\alpha _2 > \alpha_1 > 2^{-\frac{4}{3}}$, then  $c_2^{(2)} >c_2^{(1)}$. The following controls
\begin{eqnarray*}
\xi_{t}^{2*,+} &=& \textbf{1}_{\left\{x^2-x^1<-c_2^{(2)}\right\}} \left(-c_2^{(2)}-x^2+x^1 \right),\\
\xi_{t}^{2*,-} &=& \textbf{1}_{\left\{x^2-x^1>c_2^{(2)}\right\}}\left(c_2^{(2)}-x^2+x^1\right),\\
\end{eqnarray*}
and 
\begin{eqnarray*}
\xi_t^{1*,+} &=& \max \left\{0,\max_{0 \leq u \leq t} \{  x^2 + B_u^2 +\xi_0^{2*,+}-\xi_0^{2*,-}-x^1 - {B}^1_u + \xi_u^{1*,-}  - c_2^{(1)} \}\right \},\\
\xi_t^{1*,-} &=& \max \left\{0,\max_{0 \leq u \leq t} \{-x^2 - B_u^2 -\xi_0^{2*,+}+\xi_0^{2*,-}+x^1 +{B}^1_u + \xi_u^{1*,+}  - c_2^{(1)} \} \right\}.
\end{eqnarray*}
give a Markovian NE.  The corresponding NE values are 
   \begin{eqnarray}\label{v1-2}
    v^{1}(x^1,x^2)=\left\{
                \begin{array}{ll}
                v^1({x^1,x^1+c_2^{(2)}})  & x^1-x^2  \leq -c_2^{(2)},\\
                v^1( x^2-c_2^{(1)},x^2) +x^2-x^1-c_2^{(1)},  & -c_2^{(2)}  \leq   x^1-x^2 \leq -c_2^{(1)}, \\
                -\frac{p_2^{''}(c^{(1)}_2) \cosh\left(\sqrt{\alpha}(x^1-x^2)\right)}{\alpha  \cosh \left(c^{(1)}_2\sqrt{\alpha}\right)} + p_2(x^1-x^2),  & \vert x^1 -x^2\vert  \leq  c^{(1)}_2,\\
         x^1 -x^2- c^{(1)}_2  +v^1(x^2+c^{(1)}_2,x^2),    &  c^{(1)}_2  \leq x^1-x^2  \leq  c^{(2)}_2,\\
       v^1({x^1,x^1-c_2^{(2)}})   & x^1-x^2  \ge c^{(2)}_2,
                \end{array}
              \right.
 \end{eqnarray}
and
\begin{eqnarray}\label{v2-2}
    v^{2}(x^1,x^2)=\left\{
                \begin{array}{ll}
                v^2( x^1,x^1-c^{(2)}_2){+x^1-x^2-c_2^{(2)}}, & x^2- x^1 \leq-c^{(2)}_2, \\
                    {   v^2( x^2+c_2^{(1)},x^2)             }               &-c^{(2)}_2 \leq x^2- x^1 \leq-c^{(1)}_2        ,\\
                  -\frac{p_2^{''}({c^{(1)}_2}) \cosh \left(\sqrt{\alpha}(x^2-x^1)\right)}{\alpha  \cosh \left({c^{(1)}_2}\sqrt{\alpha}\right)} + p_2(x^2-x^1),& \vert x^2-x^1 \vert\leq  {c^{(1)}_2},\\
            {v^2(x^2-c_2^{(1)},x^2)}                                                   &c^{(1)}_2\leq  x^2-x^1 \leq c^{(2)}_2         ,\\
          x^2 -x^1- c^{(2)}_2  +v^2(x^1,x^1+c_2^{(2)}),       & x^2-x^1 \geq c^{(2)}_2.
          
                \end{array}
              \right.
  \end{eqnarray}

\end{corollary}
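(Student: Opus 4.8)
The plan is to establish the three assertions — the ordering $c_2^{(1)}<c_2^{(2)}$, the Nash property of the prescribed controls, and the explicit NE values — by a verification argument adapted to the asymmetric two-player game, in the spirit of Theorem~\ref{verification_theorem} and the construction of Theorem~\ref{NE_point_N}.

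For the ordering I would note that $c_2^{(i)}$ is the unique positive root of~(\ref{c0-2}), which is exactly the single-player threshold equation~(\ref{constant_c}) with discount $\alpha_i$, volatility $\sqrt2$ and running cost $h(\cdot/2)$. So the claim reduces to monotonicity of this threshold in $\alpha$ on $\{\alpha>2^{-4/3}\}$, which for $h(x)=x^2$ is the $N=2$ instance of Proposition~\ref{proposition:sensitivity} (note $2^{-1/3}k_2^{-1}=2^{-4/3}$): sign $\partial f_2/\partial\alpha$ and differentiate $f_2(c_2(\alpha),\alpha)=0$. Only the strict inequality $c_2^{(1)}<c_2^{(2)}$ is used below.

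Next I would read the region decomposition off the candidate value functions~(\ref{v1-2})--(\ref{v2-2}). Writing $y=x^1-x^2$, the common waiting region is $\mathcal{W}=\{|y|<c_2^{(1)}\}$, player~$1$'s action region is the bounded band $\mathcal{A}_1=\{c_2^{(1)}\le|y|\le c_2^{(2)}\}$, and player~$2$'s is the pair of half-planes $\mathcal{A}_2=\{|y|\ge c_2^{(2)}\}$; since $\mathcal{A}_1\cap\mathcal{A}_2\subset\{|y|=c_2^{(2)}\}$ has zero measure, condition~(\ref{no_jump_N_player}) holds and no $Q_i$-type tie-breaking is needed. I would then verify that $(v^1,v^2)$ solves the asymmetric HJB system: for each $i$, $v^i_{x^j}\equiv0$ on $\mathcal{A}_j$ (immediate, since each formula freezes $v^i$ on the boundary of $\mathcal{A}_j$); on the $\cosh+p_2$ piece, $-\alpha_i v^i+h(\tfrac{x^i-x^j}{2})+\tfrac12(v^i_{x^ix^i}+v^i_{x^jx^j})=0$, which identifies $p_2$ as the particular solution and fixes the $\cosh$-coefficient by the $C^2$ smooth-fit at $|y|=c_2^{(1)}$ for $v^1$ and by the reflection (Neumann) condition there for $v^2$; $|v^i_{x^i}|\le1$ on $\overline{\mathcal W}$ by convexity in $x^i-x^j$; and on the affine pieces the two gradient constraints hold with the PDE term non-negative. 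With conditions~\ref{item1}--\ref{item6} of Theorem~\ref{verification_theorem} verified ($N=2$, discount $\alpha_i$, convex $C^2$ extensions built from the piecewise formulas, transversality from linear growth), the generalized It\^{o}-Tanaka-Meyer computation of that proof yields $v^i(\pmb{x})\le J^i(\pmb{x};(\pmb{\xi}^{-i*},\xi^i))$ for every unilateral deviation, with equality along $\pmb{\xi}^{*}$. The feedback realization is: player~$2$ makes one time-$0$ jump bringing $|y|$ to $c_2^{(2)}$ when $|y_0|>c_2^{(2)}$ and is idle afterwards, while player~$1$ runs the Skorokhod reflection of $y_t=X^1_t-X^2_t$ at $\pm c_2^{(1)}$, including a time-$0$ jump whenever $c_2^{(1)}<|y_0|\le c_2^{(2)}$; the two time-$0$ jumps are taken sequentially, player~$2$ first.

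The main obstacle is the pair of sign checks that make this nested-threshold profile an equilibrium rather than merely feasible. On player~$1$'s band $c_2^{(1)}\le|y|\le c_2^{(2)}$, where $v^1$ is affine in $y$ (so its second derivatives vanish), one must show $-\alpha_1v^1+h(\tfrac{y}{2})\ge0$: player~$1$ must prefer to keep the gap at $c_2^{(1)}$ rather than drift to $c_2^{(2)}$ and let player~$2$ pay for the reflection. Dually, on $\mathcal W$ one must show $|v^2_y|\le1$ up to and including $y=\pm c_2^{(1)}$, so player~$2$ has no incentive to act in $\mathcal W$ even though player~$1$ does. Both should follow from the smooth-fit/Neumann relations at $|y|=c_2^{(1)}$, the convexity of $h$, and the defining equations~(\ref{c0-2}) — with $\alpha_1>2^{-4/3}$ entering through the ordering $c_2^{(1)}<c_2^{(2)}$ — but the case bookkeeping is the delicate part. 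The remaining points (finiteness of the two time-$0$ jumps; initial points on $\{|y|=c_2^{(1)}\}$ or $\{|y|=c_2^{(2)}\}$) are handled as in Theorems~\ref{thm: solution skorokhod} and~\ref{NE_point_N}.
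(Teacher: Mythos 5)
Your proposal follows exactly the route the paper intends: the paper offers no separate proof of Corollary~\ref{MNE-2}, presenting it as a direct extension of the verification argument of Theorem~\ref{verification_theorem} and the explicit construction of Theorem~\ref{NE_point_N}, with the threshold ordering coming from the $N=2$ instance of Proposition~\ref{proposition:sensitivity} (where indeed $2^{-1/3}k_2^{-1}=2^{-4/3}$) --- which is precisely your plan. Your reading of the nested action regions $\mathcal{A}_1=\{c_2^{(1)}\le|x^1-x^2|\le c_2^{(2)}\}$ and $\mathcal{A}_2=\{|x^1-x^2|\ge c_2^{(2)}\}$, and your isolation of the two genuinely new sign checks (the variational inequality on player one's affine band, and the gradient bound $|v^2_{x^2}|\le 1$ for player two up to $|x^1-x^2|=c_2^{(1)}$, which is where $c_2^{(1)}<c_2^{(2)}$ is actually used), are correct and, if anything, more explicit than anything the paper supplies for this corollary.
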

Figure \ref{figure5}  shows the NE defined in Corollary \ref{MNE-2}.
\begin{figure}[H] 
       
      \centering \includegraphics[width=0.5\columnwidth]{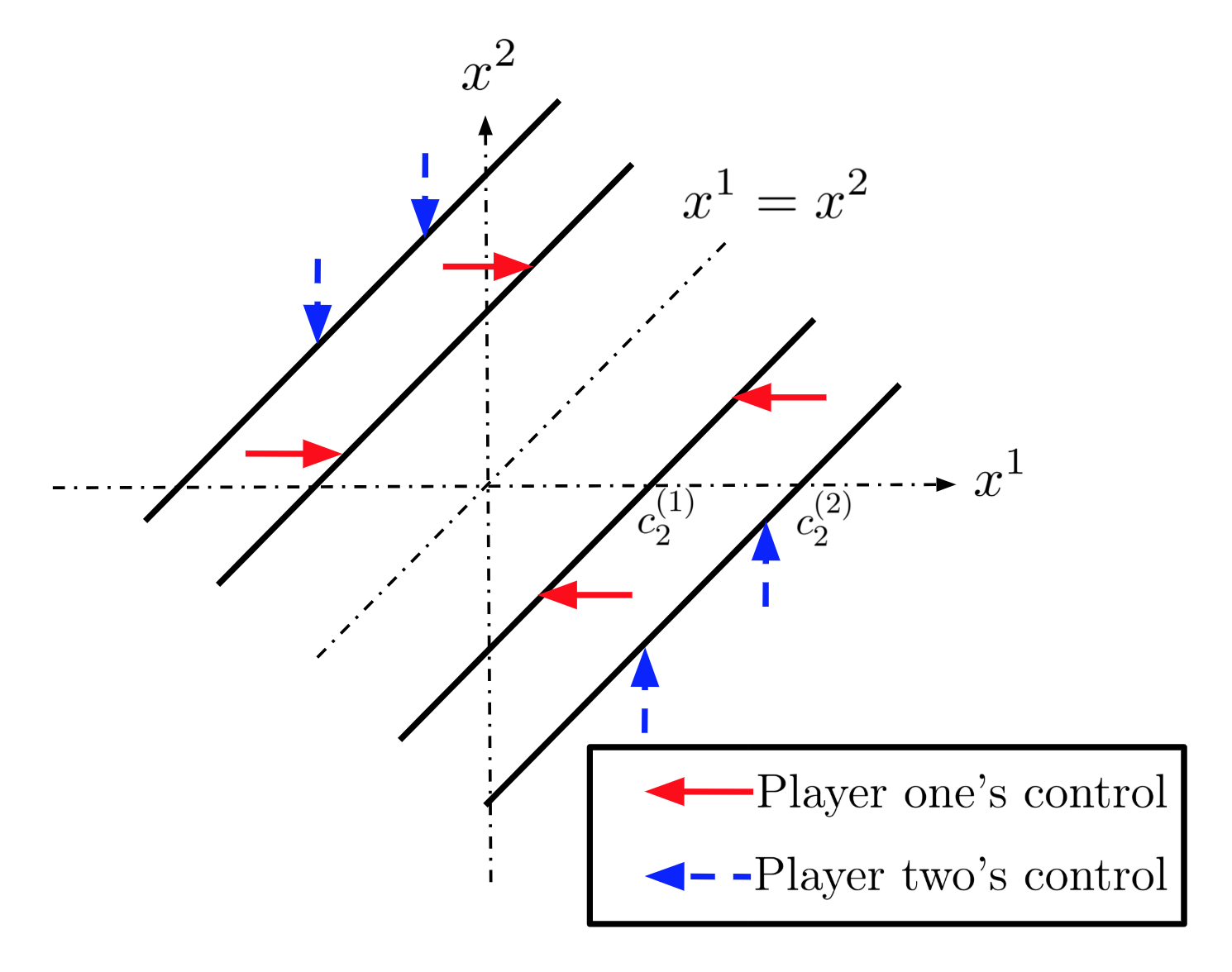}
       
        \caption{
                \label{figure5}
             N=2 with different $\alpha$ values
        }
\end{figure}



\paragraph{Acknowledgments.} The authors are in debt to the referee and the AE for their invaluable suggestions and extremely  insightful remarks. 

\section*{Appendix A: the Skorokhod Problem (SP) } 
 First, some notation for a general polyhedron G.
  
Take a fixed integer $l$ ($l \geq 1$), let $\pmb{J}=\{1,2,\cdots,l\}$.
Given an $l-$dimensional vector $\pmb{b}=(b_1,\cdots,b_l)$ and 
$N$-dimensional unit vectors $\{\pmb{n}_j, j \in \pmb{J}\}$, a polyhedron $G$ is defined by
\[
G = \{\pmb{x} \in \mathbb{R}^N \ \vert \ \pmb{n}_j \cdot \pmb{x} >b_j \mbox{ for any } j \in \pmb{J}\}.
\]
Assume the faces 
 $F_j = \{\pmb{x} \in \bar{G} \ \vert \ \pmb{n}_j \cdot \pmb{x}=b_j\}$  ($j \in \pmb{J}$)
are of  dimension ${N}-1$.

Next, take  another set of $N-$dimensional vectors $\{\pmb{d}_j, j\in \pmb{J}\}$, we can define the SP problem on a polyhedron with oblique reflections
$(\pmb{d_1}, \cdots, \pmb{d_l})$, in both the strong sense and the weak sense.

\begin{definition} [Strong solution to SP]\label{solution_skorokhod} 
Given a polyhedron $G$, a vector field  $(\pmb{d_1}, \cdots, \pmb{d_l})$, and  $\pmb{x} \in \overline{G}$.
 Given an $N$-dimensional Brownian motion $\{\pmb{B}_t\}_{t \geq 0}$ on the probability space $(\Omega, \mathcal{F}, \mathbb{P}_{\pmb{x}})$,
 a strong solution to the SP with the data   $\left(\pmb{x},   G, (\pmb{d_1}, \cdots, \pmb{d_l}), 
 \{B_t\}_{t \geq 0}\right)$ is an $\mathcal{F}_t^B$-adapted process $\pmb{X}_t$ such that

 \begin{enumerate}[label=(\alph*)]
  \item $
\pmb{X}_t =\pmb{x} +\pmb{B}_t + \pmb{\eta}_t D$, with $D =  \begin{bmatrix}
\pmb{d}^1\\
\cdots\\
\pmb{d}^l
\end{bmatrix}\in \mathbb{R}^{l\times N}$,
 \item $\pmb{X}_t$ has a  continuous path in $\overline{G}$, 
 \item  $\pmb{X}_t \in \overline{G}, \quad \mbox{ for any } t \geq 0 \mbox{ a.s.}$,
 \item $\eta^j_0=0$, $\eta^j_t$ is continuous and nondecreasing, $\eta^j_t$  increases only when $\pmb{X}_t$ is on the face $F_j$. That is, 
\[
{\eta}^j_t = \int_0^t \textbf{1}_{\left\{\pmb{X}_s \in \partial F_j\right\}} d {\eta}^j_s,
\]
\item  the reflection direction 
  $ \gamma(\pmb{x}):= \pmb{d}_j, \mbox{ if } \pmb{x} \in F_j \mbox{ for } j \in \pmb{J}.$
\end{enumerate}
\end{definition}

\begin{definition}[Weak solution to SP] \label{solution_skorokhod_weak} Given a polyhedron $G$, 
a vector field  $(\pmb{d_1}, \cdots, \pmb{d_l})$, and  $\pmb{x} \in \bar{G}$.
A weak solution to the SP with the data   $\left(\pmb{x},   G, (\pmb{d_1}, \cdots, \pmb{d_l})\right)$ is
 an adapted $N$-dimensional process $\pmb{X}_t$ defined on some probability space $(\Omega, \mathcal{F}, \mathbb{P}_{\pmb{x}})$ such that
 \begin{enumerate}[label=(\alph*)]
\item $
 \pmb{X}_t = \pmb{W}_t +  \pmb{\eta}_t D$, with $D =  \begin{bmatrix}
\pmb{d}^1\\
\cdots\\
\pmb{d}^l
\end{bmatrix}\in \mathbb{R}^{l\times N}$ and $\pmb{W}$  an $N$-dimensional Brownian motion under $\mathbb{P}_{\pmb{x}}$, with  $\pmb{X}_{0}=\pmb{x}$, $\mathbb{P}_{\pmb{x}}$-a.s.,
 \item $\pmb{X}_t$ has a continuous path in $\overline{G}$, $\mathbb{P}_{\pmb{x}}$-a.s., 
  \item $\eta^j_0=0$, $\eta^j_t$ is continuous and nondecreasing,
 $\eta^j(t)$ can increase only when $\pmb{X}_t$ is on the face $F_j$. That is,
\[
{\eta}^j_t = \int_0^t \textbf{1}_{\left\{\pmb{X}_s \in \partial F_j\right\}} d {\eta}^j_s,
\]
\item the reflection direction 
  $ \gamma(\pmb{x}):= \pmb{d}_j, \mbox{ if } \pmb{x} \in F_j \mbox{ for } j \in \pmb{J}.$
\end{enumerate}

\end{definition}

Now the proof of Theorem \ref{thm: solution skorokhod} follows from the following two lemmas.

 \begin{lemma}[Existence of the weak solution to SP]\label{lm: weak_existence_and_uniqueness}
 Fix $\pmb{x} \in \overline{\mathcal{CW}}$. There exists a weak solution to 
 the SP with the data $(\mathcal{CW},(\pmb{d}_1,\cdots,\pmb{d}_{2N}), \pmb{x})$ with $\pmb{d}_j$ $(j=1,2,\cdots,2N)$  defined in (\ref{di}) { and $\mathcal{CW}$ defined in \eqref{nonaction_region}}.
 It is a semimartingale reflected Brownian motion (SRBM)  starting from $\pmb{x}$. 
 \end{lemma}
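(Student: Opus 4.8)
The plan is to exploit the fact that $\mathcal{CW}$ is a \emph{cylinder} along the diagonal $\pmb 1 := (1,\dots,1)$, reduce the Skorokhod problem to one on a \emph{bounded} convex polyhedron in $\mathbb R^{N-1}$, and then invoke the existence theory for semimartingale reflecting Brownian motions (SRBMs) in bounded convex polyhedra of \cite{DW1996}. Write $V := \{\pmb v\in\mathbb R^N : \pmb 1\cdot\pmb v=0\}$ and let $P_V\pmb x := \pmb x - \bar x\,\pmb 1$, $\bar x := \tfrac1N\pmb 1\cdot\pmb x$, be the orthogonal projection onto $V$. Since $x^i-\tfrac{\sum_{j\neq i}x^j}{N-1}=\tfrac N{N-1}\,(x^i-\bar x)=\tfrac N{N-1}\,(P_V\pmb x)_i$, one has $\mathcal{CW}=P_V^{-1}(K)$ where
\[
K := \Big\{\pmb v\in V : |v_i| < \tfrac{N-1}N\,c_N,\ i=1,\dots,N\Big\}
\]
is a \emph{bounded} convex polyhedron in the hyperplane $V\cong\mathbb R^{N-1}$ (it contains $\pmb 0$ in its $V$-interior and is bounded because $\sum_i v_i=0$). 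Its $2N$ faces are $\widetilde F_i=\{\pmb v\in\overline{K}: v_i=-\tfrac{N-1}N c_N\}$ and $\widetilde F_{i+N}=\{\pmb v\in\overline{K}: v_i=\tfrac{N-1}N c_N\}$, each of dimension $N-2$, and $F_j=P_V^{-1}(\widetilde F_j)=\widetilde F_j+\mathbb R\pmb 1$. The reflection vectors satisfy $P_V\pmb d_i=\pmb e_i-\tfrac1N\pmb 1=-P_V\pmb d_{i+N}$ and $\pmb 1\cdot\pmb d_i=1=-\,\pmb 1\cdot\pmb d_{i+N}$; crucially, $\pmb e_i-\tfrac1N\pmb 1$ is exactly the $V$-gradient of the coordinate map $\pmb v\mapsto v_i$, so the projected reflection is \emph{normal} on each face of $K$ — although oblique in $\mathbb R^N$, it becomes normal reflection once projected to $V$.

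Next I reduce the SP. Decomposing $\pmb X_t = Y_t + Z_t\pmb 1$ with $Y_t := P_V\pmb X_t\in V$ and $Z_t := \tfrac1N\pmb 1\cdot\pmb X_t\in\mathbb R$, and using $F_j = \widetilde F_j + \mathbb R\pmb 1$ (so that $\{\pmb X_s\in F_j\} = \{Y_s\in\widetilde F_j\}$), one sees that $(\pmb X_t,\{\eta^j\})$ solves the SP with data $(\pmb x,\mathcal{CW},(\pmb d_1,\dots,\pmb d_{2N}),\{\pmb B_t\})$ if and only if
\begin{enumerate}[label=(\roman*)]
\item $(Y_t,\{\eta^j\})$ solves the SP in the bounded polyhedron $K$ (identifying $V$ with $\mathbb R^{N-1}$ via an orthonormal basis), driven by the Brownian motion $P_V\pmb B_t$, with normal reflection vectors $P_V\pmb d_j$ on $\widetilde F_j$, started at $P_V\pmb x\in\overline{K}$; and
\item $Z_t = \tfrac1N\pmb 1\cdot\pmb x + \tfrac1N\pmb 1\cdot\pmb B_t + \tfrac1N\sum_{i=1}^N\big(\eta^i_t-\eta^{i+N}_t\big)$.
\end{enumerate}
Here the flatness conditions $\eta^j_t=\int_0^t\mathbf 1_{\{\pmb X_s\in\partial F_j\}}d\eta^j_s$ and $\eta^j_t=\int_0^t\mathbf 1_{\{Y_s\in\partial\widetilde F_j\}}d\eta^j_s$ are identical, and $\pmb x\in\overline{\mathcal{CW}}\iff P_V\pmb x\in\overline{K}$.

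To finish, I construct a weak solution of (i) and read off (ii). Because $K$ is a bounded convex polyhedron and the reflection on each face is normal (hence trivially meets the geometric hypothesis imposed in \cite{DW1996}), the SP of (i) admits a weak solution $(Y_t,\{\eta^j\})$ on some filtered probability space which is an SRBM. Setting $\pmb X_t := Y_t + Z_t\pmb 1$ with $Z_t$ defined by (ii), one checks: $\pmb X_0 = P_V\pmb x + \bar x\,\pmb 1 = \pmb x$; $\pmb X_t = \pmb x + \pmb B_t + \sum_j \pmb d_j\eta^j_t$ with each $\eta^j$ continuous, nondecreasing, $\eta^j_0=0$, and flat off $F_j$; $\pmb X_t\in\overline{\mathcal{CW}}$ for all $t$ a.s.; and $\pmb X_t$ is a continuous semimartingale, being the sum of the SRBM $Y_t$ and $Z_t\pmb 1$ (a Brownian motion plus a continuous finite-variation term). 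Hence $\pmb X_t$ is the desired weak solution, and it is an SRBM starting from $\pmb x$.

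I expect the substantive work to be only bookkeeping: verifying that $K$ is full-dimensional with genuine $(N-2)$-dimensional faces, that the face/reflection correspondence under $P_V$ is exactly as claimed, and that the hypotheses of \cite{DW1996} apply in the normal-reflection case; everything else is routine. If one prefers to avoid the projection, an alternative is to truncate $\mathcal{CW}$ by the slabs $\{|\pmb 1\cdot\pmb x|<R\}$, apply \cite{DW1996} on the resulting bounded polyhedra, and let $R\to\infty$, noting that the bounded-variation part of $\pmb 1\cdot\pmb X_t$ equals $\sum_i(\eta^i_t-\eta^{i+N}_t)$ so that the added faces are a.s.\ not reached on any finite interval; the cylinder reduction above is cleaner, however.
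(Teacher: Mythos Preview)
Your argument is correct but takes a genuinely different route from the paper's.

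The paper applies \cite{DW1996} directly to the unbounded polyhedron $\mathcal{CW}$ with the oblique directions $\pmb d_j$: it identifies the \emph{maximal} index sets $\pmb K\subset\{1,\dots,2N\}$ (showing $|\pmb K|\le N-1$ since parallel faces never meet and the normals have full rank), and then verifies the two Dai--Williams conditions (S.a) and (S.b) by taking equal weights $a_j=c_j=1$ and computing the inner products $\pmb n\cdot\pmb d_{i_k}$ and $\pmb d\cdot\pmb n_{i_k}$ explicitly. No reduction or projection is used; the oblique data are handled as they stand.

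Your approach instead exploits the cylindrical structure: projecting onto $V=\pmb 1^\perp$ collapses $\mathcal{CW}$ to a bounded convex polyhedron $K$ and, crucially, turns the oblique reflections into \emph{normal} ones (since $P_V\pmb d_i=\pmb e_i-\tfrac1N\pmb 1$ is the $V$-gradient of $v\mapsto v_i$). For normal reflection in a bounded convex polyhedron the Dai--Williams hypotheses are automatic, and in fact one could appeal to the more classical Tanaka/Lions--Sznitman theory to get strong existence and pathwise uniqueness in one stroke---potentially subsuming the separate localization argument the paper carries out for strong uniqueness. The trade-off is that the paper's direct verification is entirely self-contained and displays exactly which inner products make (S.a)--(S.b) hold, while your reduction buys a cleaner geometric picture at the cost of the projection bookkeeping. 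One small point worth making explicit in your write-up: in the weak construction you obtain an $(N-1)$-dimensional Brownian motion in $V$ from the SRBM in $K$, and you must enlarge the probability space by an independent one-dimensional Brownian motion (for the $\pmb 1$-component) to assemble the full $N$-dimensional driver $\pmb B$; this is routine but should be said.
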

 In fact, this weak solution is unique in a weak sense, see \cite{DW1996}. 
 \begin{proof} [Proof of Lemma \ref{lm: weak_existence_and_uniqueness}]
Following the notation in \cite{DW1996},
 define the \textit{maximal set} to characterize the points on $\partial \mathcal{CW}$ as follows. 
 Take $\textbf{J} =\{1,2,\cdots,2N\}$  the index set of the $2N$ faces of $\mathcal{CW}$. For each $\varnothing \neq \pmb{K} \subset \textbf{J}$, define $F_{\textbf{K}} = \cap _{j \in \textbf{K}}F_j$. Let $F_{\varnothing}=\mathcal{CW}$. A set $\pmb{K} \subset \pmb{J}$ is \textit{maximal} if $\textbf{K} \neq \varnothing$, $F_{\textbf{K}} \neq \varnothing$, and $F_{\textbf{K}} \neq F_{\tilde{\textbf{K}} }$ for any $\tilde{\textbf{K}} \supset {\textbf{K}} $ such that $\tilde{\textbf{K}} \neq {\textbf{K}}$. Now, it suffices to show that for each \textit{maximal} $\pmb{K} \subset \pmb{J}$,
  \begin{itemize}[font=\bfseries]
 \item[(S.a)] there is a positive linear combination $\pmb{d} = \sum_{j \in \pmb{K}} \ a_j \pmb{d}_j$ ($a_j >0, \ \forall j \in \pmb{K}$) of the $\{\pmb{d}_j, \ j \in \pmb{K}\}$ such that $\pmb{n}_j \cdot \pmb{d} >0$ for any $j \in \pmb{K}$;
 \item[(S.b)] there is a positive linear combination $\pmb{n} = \sum_{j \in \pmb{K}} c_j \pmb{n}_j$ ($c_j >0, \ \forall j \in \pmb{K}$) of the $\{\pmb{n}_j,\ j \in \pmb{K}\}$ such that $\pmb{d}_j \cdot \pmb{n} >0$ for any $j \in \pmb{K}$.
 \end{itemize}
 
Let us first show that for any maximal $\pmb{K}$, $|\pmb{K}| \leq N-1$. To see this claim, denote
 \[N_{\mbox{mat}}:=\begin{bmatrix}
  \pmb{n}_1 \\
   \pmb{n}_2 \\
    \vdots  \\
   \pmb{n}_{2N} 
\end{bmatrix}
=\frac{\sqrt{N-1}}{\sqrt{N}}
 \begin{bmatrix}
   1 & -\frac{1}{N-1} & -\frac{1}{N-1}& \dots  & -\frac{1}{N-1} \\
    -\frac{1}{N-1}& 1 &-\frac{1}{N-1} & \dots  & -\frac{1}{N-1} \\
    \vdots & \vdots & \vdots & \ddots & \vdots \\
     -\frac{1}{N-1} & -\frac{1}{N-1} & -\frac{1}{N-1} & \dots  & 1 \\
      -1 & \frac{1}{N-1} & \frac{1}{N-1} & \dots  & \frac{1}{N-1} \\
     \frac{1}{N-1} & -1 & \frac{1}{N-1} & \dots  & \frac{1}{N-1} \\
      \vdots & \vdots & \vdots & \ddots & \vdots \\
   \frac{1}{N-1} & \frac{1}{N-1} & \frac{1}{N-1} & \dots  & -1
\end{bmatrix} \in \mathbb{R}^{2N \times N}.
 \]

It follows from some calculations that $\det (N_{\mbox{mat}})=N-1$,  implying that for any $\pmb{K} \subset \pmb{J}$ with $|\pmb{K}|=N$, $\cap F_{j \in \pmb{K}}=\varnothing$. Moreover, for any maximal \pmb{K}, $|\pmb{K}| \leq N-1$.
Now checking the conditions  \textbf{(S.a)} and \textbf{(S.b)} for any maximal $\pmb{K}$ reduces to checking these conditions  for the maximal $\pmb{K}$ with $|\pmb{K}|=N-1$.

Note that for any $i=1, \cdots, N$, $F_i$ and $F_{N+i}$ are parallel faces such that $F_i \cap F_{N+i}=\varnothing$, there is no maximal $\pmb{K}$ for which both $i \in \pmb{K}$ and  $N+i \in \pmb{K}$.
Thus, take any $\pmb{K} = \{i_1,\cdots,i_{N-1}\}$, where $i_k \in \{k,N+k\}$ for $k=1,2,\cdots,N-1$.
Denote $m$ as the number of indexes in $\pmb{K}$ which is strictly smaller than $N$, then $N-1-m$ is the number of indexes in $\pmb{K}$ that are greater than $N$.

To check \textbf{(S.a)}, define $\pmb{n} = \sum_{k=1}^{N-1}\pmb{n}_{i_k}$, then for any $k \in \{1,2,\cdots,N-1\}$,
\begin{eqnarray*}
\pmb{n} \cdot \pmb{d}_{i_k} = \frac{\sqrt{N-1}}{\sqrt{N}}\left[1+\frac{1}{N-1}\left[ \pmb{1}_{(i_k\leq N)}(N-2m)+\pmb{1}_{(i_k> N)}(-N{+2m}+2)\right] \right]>0.
\end{eqnarray*}
To check \textbf{(S.b)}, define $d = \sum_{k=1}^{N-1}d_{i_k}$, then for any $k \in \{1,2,\cdots,N-1\}$,
\begin{eqnarray*}
\pmb{d} \cdot \pmb{n}_{i_k} = \frac{\sqrt{N-1}}{\sqrt{N}}\left[1+\frac{1}{N-1}\left[ \pmb{1}_{(i_k\leq N)}(N-2m)+\pmb{1}_{(i_k> N)}(-N{+2m}+2)\right] \right]>0.
\end{eqnarray*}

 \end{proof}

Next, the  uniqueness of solution in the strong sense is established by the localization technique. 
That is, construct a sequence of bounded region $\mathcal{W}_{k}$ ($k \in \mathbb{N}^+$) such that
 \[
 \mathcal{W}_{1} \subset  \mathcal{W}_{2}  \subset \cdots \subset \mathcal{CW},
 \]
 where $\mathcal{W}_{k}$ satisfies the condition in \cite{DI1993}. Then  define a sequence of stopping times associated with $\mathcal{W}_{k}$ ($k \in \mathbb{N}^+$) and extend the strong uniqueness result on  bounded regions in \cite{DI1993}.
\begin{lemma}[Uniqueness of the strong solution to SP]\label{lm: strong_uniqueness}
Given a probability space $(\Omega, \mathcal{F}, \mathbb{P})$, suppose there are two strong solutions $\pmb{X}_t^{*}$ and $\pmb{X}_t^{*\prime}$ to the SP with the data $\left(\pmb{x},\mathcal{CW}, (\pmb{d}_1,\cdots,\pmb{d}_{2N}),\{\pmb{B}_t\}_{t \ge 0}\right)$ with $\pmb{d}_i$ $(i=1\cdots,2N)$  defined in (\ref{di}). Then
\begin{eqnarray*}
\mathbb{P}_{\pmb{x}}\left( \pmb{X}_t^{*}=\pmb{X}_t^{*\prime};\quad 0 \leq t <\infty \right)=1.
\end{eqnarray*}
 \end{lemma}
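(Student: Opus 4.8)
The plan is to reduce the statement, via a localization argument, to the pathwise-uniqueness theorem of Dupuis and Ishii~\cite{DI1993} for \emph{bounded} polyhedra with oblique reflection. The geometric fact that makes this work is that $\mathcal{CW}$ in \eqref{nonaction_region} is unbounded only along $\pmb1:=(1,\dots,1)$: since $x^i-\frac{\sum_{j\ne i}x^j}{N-1}=\frac{N}{N-1}\big(x^i-\frac{1}{N}\sum_jx^j\big)$, each normal $\pmb n_j$ ($j=1,\dots,2N$) is orthogonal to $\pmb1$, so $\mathcal{CW}$ is the product of a \emph{bounded} $(N-1)$-dimensional polytope lying in a hyperplane $\{\pmb x\cdot\pmb1=\mathrm{const}\}$ with the line $\mathbb{R}\pmb1$. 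Consequently, for each $k\in\mathbb{N}^+$ the truncation
\[
\mathcal{W}_k:=\mathcal{CW}\cap\{\pmb x\in\mathbb{R}^N:\ |\pmb x\cdot\pmb1|<k\}
\]
is a \emph{bounded} polyhedron with $\mathcal{W}_1\subset\mathcal{W}_2\subset\cdots$ and $\bigcup_k\mathcal{W}_k=\mathcal{CW}$, whose faces are the $2N$ (truncated) faces $F_1,\dots,F_{2N}$ of $\mathcal{CW}$ together with the two caps $\{\pmb x\cdot\pmb1=\pm k\}\cap\overline{\mathcal{W}_k}$, all of dimension $N-1$ once $k$ is large.

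\textbf{Step 1 (the $\mathcal{W}_k$'s satisfy the Dupuis--Ishii hypotheses).} Keep the reflection vectors $\pmb d_1,\dots,\pmb d_{2N}$ of \eqref{di} on the original faces and impose normal reflection on the two caps. One then checks the completeness conditions \textbf{(S.a)} and \textbf{(S.b)} of~\cite{DI1993} for every maximal set of faces of $\mathcal{W}_k$: for a maximal set involving only original faces this is precisely what was verified in the proof of Lemma~\ref{lm: weak_existence_and_uniqueness}, and the only new case is a maximal set consisting of one cap together with original faces indexed by some $\pmb K_0$ (with $|\pmb K_0|\le N-1$). In this case both the normal and the reflection direction of the cap are parallel to $\pm\pmb1$, each $\pmb n_j$ ($j\in\pmb K_0$) is orthogonal to $\pmb1$, and $\pmb1\cdot\pmb d_j\in\{\pm1\}$; hence the positive combination required in \textbf{(S.a)} is obtained from the one already built for $\pmb K_0$ by adding a sufficiently \emph{large} positive multiple of the cap's reflection vector, and the one required in \textbf{(S.b)} by adding a sufficiently \emph{small} positive multiple of the cap's normal. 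Thus~\cite{DI1993} yields pathwise uniqueness for the SP on each bounded polyhedron $\mathcal{W}_k$.

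\textbf{Step 2 (passing the localization to the limit).} Let $\pmb X^*$ and $\pmb X^{*\prime}$ be two strong solutions of the SP with data $(\pmb x,\mathcal{CW},(\pmb d_1,\dots,\pmb d_{2N}),\{\pmb B_t\}_{t\ge0})$ driven by the \emph{same} Brownian motion, and set
\[
\tau_k:=\inf\{t\ge0:\ |\pmb X^*_t\cdot\pmb1|\ge k\ \text{ or }\ |\pmb X^{*\prime}_t\cdot\pmb1|\ge k\}.
\]
By Definition~\ref{solution_skorokhod}, each solution has continuous paths in $\overline{\mathcal{CW}}$ and continuous, nondecreasing reflection terms $\eta^j$ with $\eta^j_0=0$, so $t\mapsto\pmb X^*_t\cdot\pmb1=\pmb x\cdot\pmb1+\pmb B_t\cdot\pmb1+\sum_{i=1}^N(\eta^i_t-\eta^{i+N}_t)$ is a.s. finite and continuous; hence $\sup_{s\le t}|\pmb X^*_s\cdot\pmb1|<\infty$ for every $t$, and likewise for $\pmb X^{*\prime}$, so $\tau_k\uparrow\infty$ a.s. On $[0,\tau_k)$ both processes stay in $\overline{\mathcal{W}_k}$ and never touch the caps, so their cap local times vanish there and each is, on $[0,\tau_k)$, a strong solution of the SP on the bounded polyhedron $\mathcal{W}_k$ driven by $\{\pmb B_t\}$. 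Applying the pathwise uniqueness of Step~1, localized at $\tau_k$, gives $\pmb X^*_t=\pmb X^{*\prime}_t$ for $0\le t<\tau_k$ a.s., and letting $k\to\infty$ yields $\mathbb{P}_{\pmb x}\big(\pmb X^*_t=\pmb X^{*\prime}_t,\ 0\le t<\infty\big)=1$.

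\textbf{Main obstacle.} The delicate point is Step~1: one must certify that the truncated polyhedra, \emph{with} their adjoined cap faces and reflection directions, actually fall within the class of domains covered by~\cite{DI1993}, in particular that the cone conditions survive along the new edges where a cap meets the original faces. The orthogonality $\pmb n_j\perp\pmb1$ is exactly what decouples these checks and reduces them to the inequalities already available for $\mathcal{CW}$, so this is the place where the special structure of $\mathcal{CW}$ (all faces parallel to $\pmb1$, reflections along the coordinate axes) is used in an essential way.
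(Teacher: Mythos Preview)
Your approach is essentially the paper's: truncate $\mathcal{CW}$ along the unbounded direction $\pmb 1$ to get bounded polyhedra $\mathcal{W}_k$, equip the two new cap faces with normal reflection $\pm\pmb 1/\sqrt{N}$, invoke Dupuis--Ishii on each $\mathcal{W}_k$, and pass to the limit via stopping times. The localization argument and the role of the orthogonality $\pmb n_j\perp\pmb 1$ are exactly as in the paper, and your justification of $\tau_k\uparrow\infty$ via continuity of $t\mapsto\pmb X^*_t\cdot\pmb 1$ is in fact a bit cleaner than the paper's contradiction argument.

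There is one discrepancy worth flagging. In Step~1 you verify conditions \textbf{(S.a)} and \textbf{(S.b)} on the truncated polyhedra, but in the paper those are the Dai--Williams conditions used in Lemma~\ref{lm: weak_existence_and_uniqueness} for \emph{existence} of the weak solution. For the \emph{uniqueness} result of~\cite{DI1993} the paper instead checks the diagonal-dominance condition it labels \textbf{(S.c)}: for each $\pmb x\in\partial\mathcal{W}_k$ there exist $b_i\ge 0$, $i\in I_k(\pmb x)$, with $b_i\,\pmb d_i\cdot\pmb n_i>\sum_{j\in I_k(\pmb x)\setminus\{i\}} b_j\,|\pmb d_j\cdot\pmb n_i|$. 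The paper verifies this with $b_i\equiv 1$, using $|\pmb n_{i_0}\cdot\pmb d_{i_0}|=\sqrt{(N-1)/N}$, $|\pmb n_{i_0}\cdot\pmb d_j|=\sqrt{(N-1)/N}\cdot\frac{1}{N-1}$ for $j\neq i_0$ among the original faces, and $\pmb n_{i_0}\cdot\pmb d_j=0$ for the caps. Your orthogonality observations are precisely what makes this check go through, so the content of your Step~1 is correct; you should just be sure you are invoking the hypothesis that~\cite{DI1993} actually requires for pathwise uniqueness, rather than the existence conditions \textbf{(S.a)}/\textbf{(S.b)}.
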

 
 \begin{proof}[Proof of Lemma \ref{lm: strong_uniqueness}.]
First, the uniqueness on a bounded region.
To this end,  define the bounded region $\mathcal{W}_{k} = \mathcal{CW} \cap \left\{\pmb{x} \ \left\vert \  \left|\sum_{i=1}^N x_i \right| < k \right.\right\}$ for $k \in \mathbb{N}_{+}$. Clearly, $\mathcal{W}_{k}  \subseteq \mathcal{W}_{k+1} \subseteq \mathcal{CW} $ and $\mathcal{CW} = \cup_{k} \mathcal{W}_{k}$. Define the boundaries of $\mathcal{W}_{k} $ as 
\begin{eqnarray*}
\partial \mathcal{W}_{k} = \cup_{i=1}^{2N} F_i^{(k)} \cup F_{2N+1}^{(k)} \cup F_{2N+2}^{(k)},
\end{eqnarray*}
where $F_i^{(k)}  = F_i \cap \overline{ \mathcal{W}}_{k}$ for $i=1,\ldots, 2N$, $F_{2N+1}^{(k)} =\overline{ \mathcal{W}}_{k} \cap \{\pmb{x} \given \sum_{i=1}^N x_i = k\} $, and $F_{2N+2}^{(k)} =\overline{ \mathcal{W}}_{k} \cap \{\pmb{x} \given \sum_{i=1}^N x_i = -k\} $. Define the reflection direction $\gamma^{(l)}(\cdot)$ on $\partial \mathcal{W}_{k}$ as
\begin{eqnarray}\label{direction_bounded}
\gamma^{(k)}(\pmb{x})=\left\{\begin{array}{ll}
&\pmb{d}_{2N+1} =\pmb{n}_{2N+1}= \frac{1}{\sqrt{N}}(-1,-1,\ldots,-1), \,\, \pmb{x} \in F_{2N+1}^{(k)},\\
&\pmb{d}_{2N+2} =\pmb{n}_{2N+2}= \frac{1}{\sqrt{N}}(1,1,\ldots,1), \,\, \pmb{x} \in F_{2N+2}^{(k)},\\
&\pmb{d}_i, \, \, \pmb{x} \in F_{i}^{(k)}, \mbox{ for } i=1,2,\cdots,2N. 
\end{array}\right.
\end{eqnarray}

 For $\pmb{x} \in \partial \mathcal{W}_k$, define $I_k(\pmb{x}):= \left\{i: \pmb{x} \in F_i^{(k)}\right\}$ as the \textit{index set} of $\pmb{x}$. Following \cite{DI1993}, we will show that, for each $\pmb{x} \in \partial \mathcal{W}_{k}$, there exists  $b_i \geq 0$, $i \in I_k(\pmb{x})$, such that
 \begin{eqnarray*}
 b_i \pmb{d}_i \cdot \pmb{n}_i> \sum_{j \in I_k(\pmb{x}) \setminus \{i\}} b_j |\pmb{d}_j \cdot \pmb{n}_i|. \qquad \qquad \mbox{\textbf{(S.c)}}
 \end{eqnarray*}
 Define $b_i=1$ for any $i=1,2,\cdots,2N+2$. It is sufficient to verify \textbf{(S.c)} for $\pmb{x} \in \partial \mathcal{W}_k$ such that $|I_k(\pmb{x})|=N$. In this case, either $2N+1 \in I_k(\pmb{x})$ or $2N+2 \in I_k(\pmb{x})$. Take any $i_0 \in I_k(\pmb{x})\setminus \{2N+1,2N+2\}$, 
 \begin{eqnarray*}
 |\pmb{n}_{i_0}\cdot \pmb{d}_{i_0}| &=& \frac{\sqrt{N-1}}{\sqrt{N}},  \\
 | \pmb{n}_{i_0}\cdot \pmb{d}_{j}| &=&  \frac{\sqrt{N-1}}{\sqrt{N}}\frac{1}{N-1},\quad \mbox{ for }j \in  I_k(\pmb{x})\setminus \{2N+1,2N+2,i_0\}, \\
  | \pmb{n}_{i_0}\cdot \pmb{d}_{j}| &=& 0 ,\quad \mbox{ for }j \in  \{2N+1,2N+2\}.
 \end{eqnarray*}
 Hence (\textbf{S.c}) holds with $\pmb{d}_{i_0}\cdot \pmb{n}_{i_0}= \frac{\sqrt{N-1}}{\sqrt{N}}$ and $\sum_{j \in I_k(\pmb{x}) \setminus \{i_0\}} |\pmb{d}_j \cdot \pmb{n}_{i_0}|= \frac{\sqrt{N-1}}{\sqrt{N}} \frac{N-2}{N-1}$.
 By \cite{DI1993}, there exists a unique strong solution $(\pmb{X}^{k}_t,\pmb{\eta}^{k})_{t\geq 0}$ to the SP with the data $\left(\pmb{x}, \mathcal{W}_k,(\pmb{d}_1,\cdots,\pmb{d}_{2N+2}),\{\pmb{B}_t\}_{t \geq 0}\right)$ such that $\pmb{x} \in \overline{\mathcal{W}}_k$.
 
 %
 %
%
Now, let $(\pmb{X}_t^{k \prime},\pmb{\eta}^{k\prime})$ be the strong solution to the SP with the data $\left(\pmb{x}^{\prime}, \mathcal{W}_k,(\pmb{d}_1,\cdots,\pmb{d}_{2N+2}),\{\pmb{B}^{\prime}_t\}_{t \geq 0}\right)$. 
Then by \cite{DI1993}, there exists a constant $C_k <\infty$ such that for any $0 \leq t \leq T$,
 \begin{eqnarray}\label{estimation_bound}
 \mathbb{E} \left( \sup_{ 0 \leq s \leq t} \|\pmb{X}_s^{k}- \pmb{X}_s^{k \prime}|^2\right) \leq C_k \left\{\|\pmb{x}-\pmb{x}^{\prime}\|^2 + \int_0^t \mathbb{E}\left( \sup_{ 0\leq u \leq s}\|\pmb{B}_s- \pmb{B}_s^{\prime}\|^2\right)ds\right\}.
 \end{eqnarray}
 
To finish the proof, now suppose that there are two strong solutions $(\pmb{X}_t^{*},\pmb{\eta}^*)_{t \geq 0}$ and $(\pmb{X}_t^{*\prime},\pmb{\eta}^{*\prime})_{t \geq 0}$ to the SP with the data $\left(\pmb{x}, \mathcal{CW},(\pmb{d}_1,\cdots,\pmb{d}_{2N}),\{\pmb{B}_t\}_{t \geq 0}\right)$, with $\pmb{d}_i$ $(i=1,2,\cdots,2N)$ defined in (\ref{di}) and $\pmb{X}_0^{*}=\pmb{X}_0^{*\prime}=\pmb{x} \in \overline{\mathcal{CW}}$.
Suppose there exists $M := M(\pmb{x})$ such that $\pmb{x} \in \overline{\mathcal{W}}_k$ for $k \geq M$. Define $\tau_{k} = \inf \{t: \pmb{X}_t^{*} \in F^{(k)}_{2N+1} \cup F^{(k)}_{2N+2}\}$ and $\tau_{k}^{\prime} = \inf \{t: \pmb{X}_t^{*\prime} \in F^{(k)}_{2N+1} \cup F^{(k)}_{2N+2}\}$.
Then  the uniqueness of the strong solution to SP with the data $(\pmb{x}, \mathcal{W}_k,\gamma^{(k)},\{\pmb{B}_t\}_{t \geq 0})$ implies that for $k \geq M$,
\begin{eqnarray}
&& \mathbb{P}_{\pmb{x}}\left(\pmb{X}_t^{*}= \pmb{X}_t^{*\prime}, \ t \leq \tau_k \right)=1,\\
&& \mathbb{P}_{\pmb{x}}\left(\tau_k =\tau_k^{\prime} \right)=1. \nonumber
 \end{eqnarray}
 By the continuity of the probability measure, 
 \begin{eqnarray}
\mathbb{P}_{\pmb{x}}\left(\pmb{X}_t^{*}=\pmb{X}_t^{*}{'}, \ t \leq \tau_k, k \rightarrow \infty \right)= \lim_{k \rightarrow \infty} \mathbb{P}_{\pmb{x}}\left(\pmb{X}_t^{*}= \pmb{X}_t^{*\prime}, \ t \leq \tau_k \right)=1.
 \end{eqnarray}
 Now it remains  to show $\lim_{k \rightarrow \infty} \tau_{k} = \infty$ a.s.. Suppose otherwise, then there exists $\tau^* = \tau^*(\omega)<\infty$
 such that $\lim_{k \rightarrow \infty} \tau_{k} = \tau^*$ pathwise. 
 Therefore,
 \begin{eqnarray*}\label{unbounded_stopping}
 \mathbb{P}_{\pmb{x}}\left(\ \left\vert\sum_{i=1}^N {X}_{\tau^*}^{i*}\right \vert=\infty\  \right)=\mathbb{P}_{\pmb{x}}\left(\ \left\vert\sum_{i=1}^N {x}^i+{B}_{\tau^*}^i+\eta_{\tau^*}^{i*}-\eta_{\tau^*}^{N+i*}\right \vert=\infty\  \right)=1,
 \end{eqnarray*}
 which implies, from the bounded variation property of  $\{\pmb{\eta}^*\}_{t \ge 0}$,
 $ \mathbb{P}_{\pmb{x}}\left(\left\vert\sum_{i=1}^N B_{\tau^*}^i\right\vert = \infty \right)=1.$
 This contradicts with the property of Brownian motion, thus $\lim_{k \rightarrow \infty} \tau_{k} = \infty$ a.s..
  \end{proof}

\section*{Appendix B: Well-posedness of Algorithm \textbf{1}}

If $\pmb{x}=(x^1, \cdots, x^N)\notin \overline{
\mathcal{CW}}$, then there exists an $i$ such that $\pmb{x} \in  \mathcal{A}_i$.
For any $k>1$, denote the point after the $k$-th jump as $\pmb{x}_{k} = (x^1_{k},\ldots,x^N_{k})$.
In step $k+1$, if $\pmb{x}_{k} \in \mathcal{A}_i$, player $i$ will apply a minimal push to reach the boundary $\partial E_i^{-} \cup \partial E_i^{+}$.

If the jumps do not stop in finite steps, an argument by contradiction will show that  
they converge to $\hat{\pmb{x}} \in \partial {\mathcal{CW}}$.
Let us first  show that  $\{\pmb{x}_{k}\}_{k\ge 1}$ converges. At each step $k \ge 1$, denote $x^{(1)}_{k} \leq \cdots \leq x^{(N)}_{k} $ as the ordered points of $x^1_{k}, \ldots,x^N_{k}$. At each step $k$, only  the player with position ${x}^{(1)}_{k}$ or ${x}^{(N)}_{k}$ will jump.
Therefore $\{x^{(1)}_{k}\}_{k\ge 1}$ is a non-decreasing sequence with an upper bound $\max_{i\le N}x^{i}$. Hence the limit exists, denoted as $x^{(1)}_*$. Similarly,  the bounded non-increasing sequence $x^{(N)}_{k}$ has a limit, denoted as $x_*^{(N)}$. Then by the sandwich argument, $\{\pmb{x}_{k}\}_{k\ge 1}$ converges.

Next, denote the distance $d^{i}_{k} = |x^{i}_{k} - \frac{\sum_{i \neq j}{x^{j}_{k}}}{N-1}|$. By definition of $ \mathcal{A}_i$, the player with the biggest $d^{i}_{k}$ will jump in step $k+1$. 
Suppose $\hat{\pmb{x}} = \lim_{k \rightarrow \infty }\pmb{x}_{k} \notin \partial {\mathcal{CW}}$, then there exists an $m \in \{1,\ldots,N\}$ such that $\hat{\pmb{x}} \in  \mathcal{A}_m$. Denote the distance $\Delta = |\hat{{x}}^m -\frac{ \sum_{j \neq m}\hat{{x}}^j}{N-1} |-c_N=\max_{i=1,2,\cdots,N} \{|\hat{{x}}^i -\frac{ \sum_{j \neq i}\hat{{x}}^j}{N-1} |\} - c_N >0$. Given $\epsilon>0 $ so that $\epsilon < \frac{\Delta}{8N}$ and $\overline{\mathcal{CW}} \cap {B}_{\epsilon}(\hat{\pmb{x}}) = \emptyset$, 
there exists a sufficiently large $K>0$ such that for any $k'>K$, $
\pmb{x}_{k{'}} \in B_{\epsilon}(\hat{\pmb{x}}).$
That is, $\sum_{i=1}^N |{x}^i_{k{'} }- \hat{{x}}^i |^2 \leq \epsilon^2$.
By the triangle inequality,
\begin{eqnarray*}
\left|x^m_{k'} - \frac{\sum_{j \neq m} x_{k'}^j}{N-1}\right|-c_N &\geq& \left|\hat{{x}}^m- \frac{\sum_{j \neq m} \hat{{x}}^j}{N-1}\right|-c_N - \left| \left( x^m_{k'}  - \frac{\sum_{j \neq m} x_{k'}^j}{N-1} \right)- \left( \hat{{x}}^m  - \frac{\sum_{j \neq m} \hat{{x}}^j}{N-1} \right) \right| \\
& \geq & \left|\hat{{x}}^m- \frac{\sum_{j \neq m} \hat{{x}}^j}{N-1} \right| -c_N- \left|x^m_{k'} -\hat{{x}}^m \right| -\frac{1}{N-1} \sum_{j \neq m} \left|x_{k'}^j - \hat{{x}}^j \right|\\
&\geq& \Delta -2 \epsilon \geq \frac{4N-1}{4N} \Delta >2 \epsilon .
\end{eqnarray*}
Thus in step $k'+1$, the player should jump at a minimum distance of $\frac{4N-1}{4N} \Delta$, which is strictly greater than $2\epsilon$ when $N>1$. Therefore $\pmb{x}_{k'+1} \notin B_{\epsilon}(\hat{\pmb{x}})$, which is a contradiction. Hence $\hat{\pmb{x}} = \lim_{k \rightarrow \infty }\pmb{x}_{k} \in \partial \overline{\mathcal{CW}}.$

To see that the total distance of sequential jumps  is bounded, rewrite  $d^{i}_{k}$ in the form of $d^{i}_{k} =\frac{N-1}{N} \left|x^{i}_{k} - \bar{x}_{k} \right|$, where $\bar{x}_{k} = \frac{\sum_{j=1}^N x^j_{k}}{N}$. Clearly, in step $k+1$, either the player with value $x^{(1)}_{k}$ or the player with value $x^{(N)}_{k}$ will jump.
By the monotonicity property of $\{x^{(N)}_{k}\}_k$ and  $\{x^{(1)}_{k}\}_k$, the total distance of jumps 
is bounded pointwise.

\begin{figure}[H] 
       \centering \includegraphics[width=0.55\columnwidth]{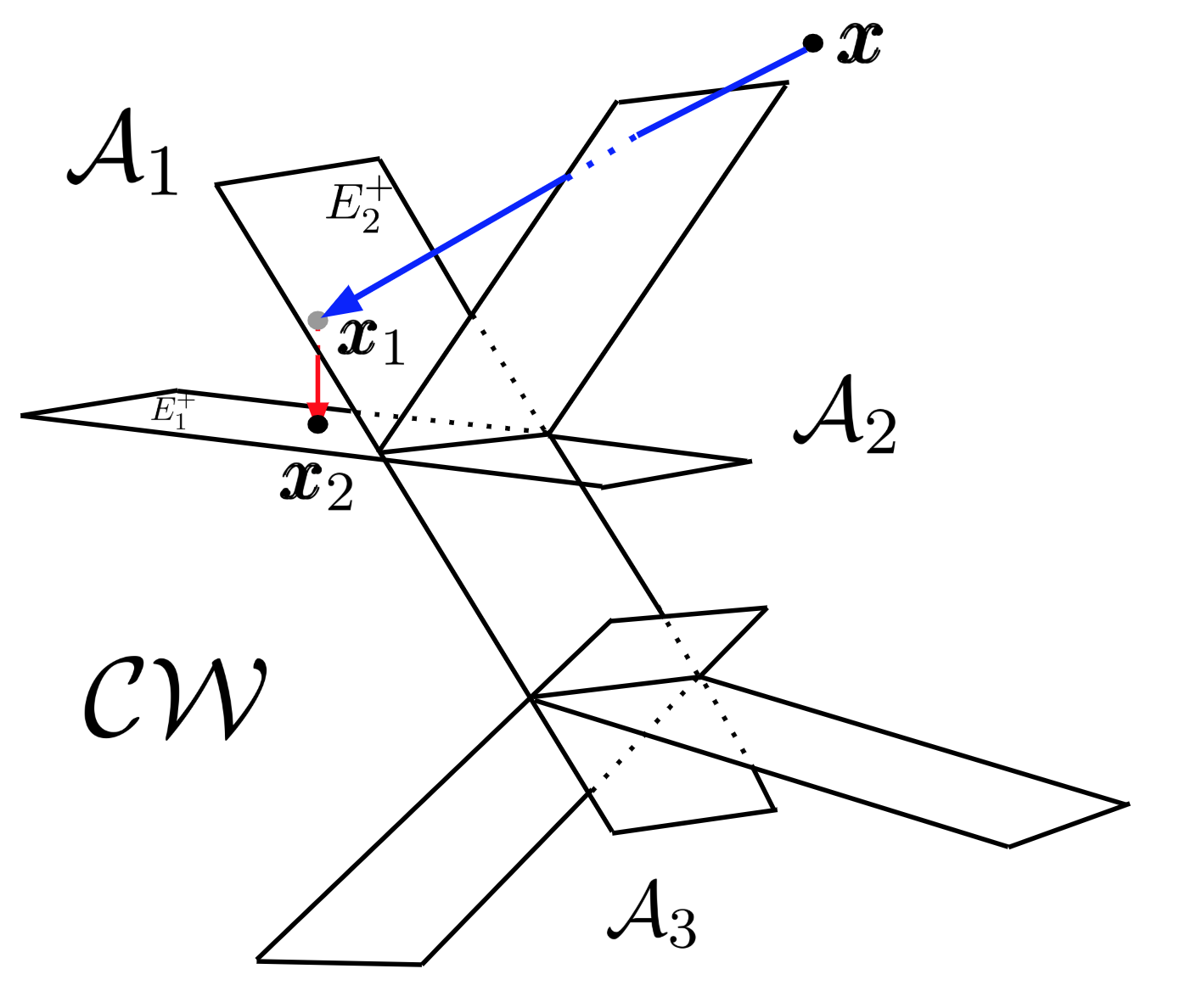}
       
        \caption{
                \label{figure_jumps} 
              Sequential jumps at time $0$
        }
\end{figure}

\section*{Appendix C: Proof of Proposition \ref{constant_convergence}}
\begin{proof}
First, denote for $N \geq 2$,
\begin{eqnarray*}\label{f_N}
f_N(x) =  \frac{1}{\sqrt{\frac{2(N-1)\alpha}{N}}} \tanh \left(x \sqrt{\frac{2(N-1)\alpha}{N}} \right) - \frac{p'_N(x)-1}{p''_N(x)},
\end{eqnarray*}
and $$f_1(x) = \frac{1}{\sqrt{{2\alpha}}} \tanh \left(x \sqrt{2\alpha} \right) - \frac{p^{\prime}_1(x)-1}{p^{\prime \prime}_1(x)}.$$
Then there exists a unique $c>0$ such that $f_1(c) = 0$ and there exists a unique  $c_N>0$ such that $f_N(c_N) = 0$ for $N \geq 2$.
Denote $m_1(x) = \frac{p^{\prime }_1(x)-1}{p^{\prime \prime}_1(x)}$ and $m_N(x) = \frac{p'_N(x)-1}{p''_N(x)}$. There exists $\tilde{c}_N>0$ such that
$m'_N(x) \geq 1$ on $(\tilde{c}_N,\infty)$ with $0<\tilde{c}_N<c_N<\infty$ for $N \geq 2$. And there exists $\tilde{c}>0$ such that
$m'_1(x) \geq 1$ on $(\tilde{c},\infty)$ with $0<\tilde{c}<c<\infty$.
 Now $0<\tanh'(x) = 1-\tanh^2(x)<1$ for any $x \in (0,\infty)$, therefore $f'_N(x)<0$ on $(c_N,\infty)$ for $N\ge 2$ and $f'_1(x)<0$ on $(c,\infty)$.
Since $f_N$ converges to $f_1$ pointwise, for any $\epsilon>0$, there exists an $N_{\epsilon}$ such that  for any $n \geq N_{\epsilon}$,
$
|f_n(c)| = |f_n(c) - f_1(c)| \leq \epsilon$.
By the uniqueness of the zeros for each function $f_N$, 
$
c_N \rightarrow c$
as $N \rightarrow \infty$.

Secondly, when $h=x^2$, $f_N$ reduces to
\[
f_N(x) = \frac{1}{\sqrt{\frac{2(N-1)\alpha}{N}}}\tanh \left(\sqrt{\frac{2(N-1)\alpha}{N}}x \right)-x+\frac{\alpha}{2\left( \frac{N-1}{N}\right)^2},
\]
with $f_N(c_N)=0$. 
Therefore, $\frac{\partial c_N }{\partial N} = - \frac{\partial f_N}{\partial N} \cdot \frac{1}{\frac{\partial f_N}{\partial c_N}}$ with $\frac{\partial f_N}{\partial c_N} = -\tanh^2\left(\sqrt{\frac{2(N-1)\alpha}{N}}x \right)<0,$ the conclusion follows after simple computations.

\end{proof}

\section*{Appendix D: Stationary Mean Field Games (SMFGs) }
{
\paragraph{SMFG for (\ref{game}).} An SMFG version of (\ref{game})  can be formulated as the follows.
\begin{eqnarray}\label{stationary_game}
\begin{aligned}
v_{\infty}(x) =& \inf_{(\xi^{+},\xi^{-}) \in  \mathcal{U}_{\infty}} J_{(\infty)} (x;\xi_t^{+},\xi_t^{-} ) \\
=& \inf_{(\xi^{+},\xi^{-}) \in  \mathcal{U}_{\infty}} \mathbb{E} \int_0^{\infty} e^{-\alpha t} \left[\left. {h}(X_t-m_{\infty})dt + d {\xi_t^{i,+}}+d {\xi_t^{i,-}} \right\vert X_{0-} = x\right],\\
 \mbox{such that} & \qquad X_t = B_t + \xi_t^ {+} -  \xi_t^{-}+x, \\
 &\hspace{20pt}\mu_{0-} = \mu, \ \ X_{0-}\sim \mu, \ \  m_{0-} = m = \int x \mu_{0-}(dx),
\end{aligned}
\end{eqnarray}
where 
\begin{itemize}
\item$\mu_t = \lim_{N \rightarrow \infty} \frac{ \sum_{i=1}^N \delta_{X_t^i }}{N}$ is the distribution of $X_t$, 
\item $m_t =  \lim_{N \rightarrow \infty} \frac{ \sum_{i=1}^N X_t^i}{N}$ is the mean position of the population at time $t$, 
\item $m_\infty=\lim_{t \rightarrow \infty}m_t$ is the limiting mean position if it exists.
\end{itemize}
The admissible control set for SMFG is $\mathcal{U}_{\infty}$ as defined in Section \ref{section: MFG}.
SMFG is a game with the long-term mean-field aggregation.
\begin{definition}[NE to  SMFG (\ref{stationary_game})]
An NE to the SMFG (\ref{stationary_game}) is a  pair of Markovian control $(\xi_t^{*,+},\xi_t^{*,-})_{t \geq 0}$ and a limiting mean position $m^*$ such that
\begin{itemize}
\item ${v}^*(x) =J_{(\infty)}\left(x;\xi^{*,+},\xi^{*,-}|\,m^*\right)=\min_{\xi \in \mathcal{U}_{\infty}}J_{(\infty)}\left(x;\xi^{+},\xi^{-}|m^*\right)$,
\item $m^*=\lim_{t \rightarrow \infty} \mathbb{E}[X_{t}^{*}]$ where $X_t^{*}$ is the controlled dynamic under $(\xi_t^{*,+},\xi_t^{*,-})_{t \ge 0}$.
\end{itemize}
${v}^*(x)$ is called the NE value of the SMFG associated with ${\xi}^*$.
\end{definition}}

\bibliographystyle{apa}
\bibliography{ref.bib}

\end{document}